\numberwithin{equation}{section}
\newtheorem{thm}{Theorem}[section]
\newtheorem{lem}[thm]{Lemma}
\newtheorem{proposition}[thm]{Proposition}
\newtheorem{corollary}[thm]{Corollary}
\newtheorem{definition}[thm]{Definition}
\newtheorem{thdef}[thm]{Theorem-Definition}
\newtheorem*{thm*}{Theorem}
\newtheorem{remark}{Remark}
\newcommand{\Var}{\mbox{Var}}
\newcommand{\sinc}{\mbox{sinc}}
\newcommand{\Tr}{\mbox{Tr}}
\def\Im{\textrm{Im}}
\def\Re{\textrm{Re}}
\def\N{\mathbb{N}}
\def\R{\mathbb{R}}
\renewcommand\C{\mathbb{C}}
\def\E{\mathbb{E}}
\def\ind{{\mathchoice {\rm 1\mskip-4mu l} {\rm 1\mskip-4mu l}
		{\rm 1\mskip-4.5mu l} {\rm 1\mskip-5mu l}}}
\newcommand\dd{\mathrm{d}}
\newcommand{\norm}[1]{\left\lVert#1\right\rVert}
\title{Statistical deconvolution of the free Fokker-Planck equation at fixed time}
\author{Mylène Ma\"ida\footnote{Univ. Lille, CNRS, UMR 8524 - Laboratoire Paul Painlevé, F-59000 Lille, France}, \qquad Tien Dat Nguyen\footnote{Laboratoire  de Math\'ematiques, UMR 8628, Universit\'e Paris Sud,
91405 Orsay Cedex France}, \qquad  Thanh Mai Pham Ngoc\footnote{Laboratoire  de Math\'ematiques, UMR 8628, Universit\'e Paris Sud,
91405 Orsay Cedex France},\\ Vincent Rivoirard\footnote{Ceremade, CNRS, UMR 7534, Universit\'e Paris-Dauphine, PSL Research University, 75016 Paris,
France},\qquad  Viet Chi Tran\footnote{LAMA, Univ Gustave Eiffel, UPEM, Univ Paris Est Creteil, CNRS, F-77447, Marne-la-Vall\'ee, France}}
\begin{document}

\maketitle

\begin{abstract}
We are interested in reconstructing the initial condition of a non-linear partial differential equation (PDE), namely the Fokker-Planck equation, from the observation of a Dyson Brownian motion at a given time $t>0$. The Fokker-Planck equation describes the evolution of electrostatic repulsive particle systems, and can be seen as the large particle limit of correctly renormalized Dyson Brownian motions. The solution of the Fokker-Planck equation can be written as the free convolution of the initial condition and the semi-circular distribution. We propose a nonparametric estimator for the initial condition obtained by performing the free deconvolution via the subordination functions method. This statistical estimator is original as it involves the resolution of a fixed point equation, and a classical deconvolution by a Cauchy distribution. This is due to the fact that, {in free probability,} the analogue of the Fourier transform  is the R-transform, related to the Cauchy transform. In past literature, there has been a focus on the estimation of the initial conditions of linear PDEs such as the heat equation, but to the best of our knowledge, this is the first time that the problem is tackled for a non-linear PDE. The convergence of the estimator is proved and the integrated mean square error is computed, providing rates of convergence similar to the ones known for non-parametric deconvolution methods. Finally, a simulation study illustrates the good performances of our estimator.
\end{abstract}

\noindent Keywords: PDE with random initial condition; free deconvolution; inverse problem; kernel estimation; Fourier transform; mean integrated square error; Dyson Brownian motion\\
AMS 2000: 35Q62; 65M32; 62G05; 46L53; 35R30; 60B20; 46L54


\section{Introduction}

\subsection{Motivations}
Letting the initial condition of a partial differential equation (PDE) be random is interesting for considering complex phenomena or for introducing uncertainty and irregularity in the initial state. There is a large literature on the subject, and we can mention that this has been studied for the Navier-Stokes equation, to account for the turbulence arising in fluids with high velocities and low viscosities (see \cite{vishikfursikov,constantinwu}), for the Burgers equation that is used in astrophysics (see \cite{burgers,bertoingiraudisozaki,giraud2003,flandoli2018} or also the survey by \cite{vergassollaetal}), for the wave equations, to study the solutions with low-regularity initial data (see \cite{burqtzevtkovI,burqtzevtkovII,tzvetkov}) or for the Schr\"odinger PDE (see \cite{bourgain}). The Burgers PDE or the vortex equation, associated to the Navier-Stokes PDE by considering the curl of the velocity, are of the McKean-Vlasov type as introduced and studied in \cite{sznitman,meleardcime}. Numerical approximations of such PDEs with random initial conditions have been considered in \cite{talayvaillant,transolstat}. In this paper, we are interested in the Fokker-Planck PDE which is another case of McKean-Vlasov PDE \cite{carrillomccannvillani}. This equation models the motion of particles with electrostatic repulsion and a probabilistic interpretation that we will adopt has been considered in \cite{bianespeicher}.\\

A question naturally raised in this context is to estimate the random initial condition, given the observation of the PDE solution at a given fixed time $t>0$. For linear PDEs, this inverse problem is solved by deconvolution techniques, and this has been explored for PDEs such as the heat equation or the wave equation by Pensky and Sapatinas \cite{penskysapatinas09,penskysapatinas10}. For the 1d-heat equation, it is known that the solution at time $t$, say $\nu_t(dx)$, is the convolution of the initial condition $\nu_0(dx)$ with Green function $G_t$, which is a Gaussian transition function associated with the standard Brownian motion $(B_t)_{t\geq 0}$. The probabilistic interpretation of the heat equation is built on this observation, and $\nu_t$ can be viewed as the distribution of $X_t=X_0+B_t$ where $X_0$ is distributed as $\nu_0$. Taking the Fourier transforms changes the convolution problem into a multiplication, which paves the way to reconstruct the initial condition.\\
Here, we are interested in estimating the initial condition of a non-linear PDE, namely the Fokker-Planck equation, from the observation of its solution at time $t$. Recall that the Fokker-Planck equation is:
\begin{equation}\label{eq:fokkerplanck}
\partial_t p(t,x)=-\partial_x \int_{\R^2} Hp(t,x) p(t,x)\dd x,
\end{equation}with
\[Hp(t,x)= \lim_{\varepsilon \rightarrow 0} \int_{\R \setminus [x-\varepsilon,x+\varepsilon]} \frac{1}{x-y}p(t,y)\dd y, \]
and for $t\in\R_+$, $x\in \R$, and initial condition $p_0(x) \in L^1(\R)$. Contrarily to the examples considered in \cite{penskysapatinas09,penskysapatinas10}, this PDE is non-linear of the McKean-Vlasov type with logarithmic interactions. To the best of our knowledge, this is the first work devoted to the deconvolution of a non-linear PDE to recover the initial condition. The choice of this equation is motivated by its strong similarities with the heat equation: the standard Brownian motion of the probabilistic interpretation is replaced here by the free Brownian motion $(\mathbf{h}_t)_{t\geq 0}$ (operator-valued), and the usual convolution by a Gaussian distribution is replaced by the free convolution by a semi-circular distribution $\sigma_t$ characterized by its density with respect to the Lebesgue measure:
\begin{equation}\label{def:semicircle}
\sigma_t(dx)=\frac{1}{2\pi t}\sqrt{4t-x^2} \ind_{[-2\sqrt{t},2\sqrt{t}]}(x) \ dx.
\end{equation}
If $\mathbf{x}_0$ admits the spectral measure $\mu_0$, then $\mathbf{x}_t=\mathbf{x}_0+\mathbf{h}_t$ admits
\begin{equation}\label{eq:freeconvolution}
\mu_t= \mu_0 \boxplus \sigma_t,
\end{equation}as spectral measure, where the operation $\boxplus$ is the free convolution and has been introduced by Voiculescu in \cite{voiculescu}. It can be proved that {the density $p(t,\dot )$ of} $\mu_t$ solves \eqref{eq:fokkerplanck}.

For the Fokker-Planck equation, the inverse problem boils down to a free deconvolution, where it was a usual deconvolution for the heat equation. Recently, the problem of free deconvolution has been studied by Arizmendi, Tarrago and Vargas \cite{Tarrago1}. To solve \eqref{eq:freeconvolution} in a general setting, subordination functions are used. Here, if the Cauchy transform of a measure $\mu$ is defined as $G_{\mu}(z)=\int_{\R} (z-x)^{-1}\dd\mu(x)$ for $z\in \C^+$, where $\C^+$ is the set of complex numbers with positive imaginary part, the subordination function $w_{fp}(z)$ at time $t$ is related to $G_{\mu_t}$ by the functional equation
\begin{equation}\label{eq:intro}
w_{fp}(z)=z+t G_{\mu_t}(w_{fp}(z)).
\end{equation}From this, we can recover $G_{\mu_0}$ with the formula $G_{\mu_0}(z)=G_{\mu_t}(w_{fp}(z))$ and thus $p_0$ (see Lemma \ref{lem:Gmu0} and \eqref{eq:lien_f_G} in the paper). More precisely, we prove in Section~\ref{sec:defest} that for any $\gamma>2\sqrt{t}$, $f_{\mu_0 * \mathcal C_{\gamma}}$ the density of the classical convolution of $\mu_0$ with the Cauchy distribution of parameter $\gamma$, defined by its density
\[f_\gamma(x):= \frac{\gamma}{\pi(x^2+\gamma^2)},\]
satisfies
\begin{equation}\label{deconv-intro}
f_{\mu_0 * \mathcal C_{\gamma}}(x)=  \frac{1}{\pi t} \left[ \gamma  - \Im w_{fp}(x+ i \gamma)\right], \quad x \in \R.
\end{equation}
Then, estimating $p_0$, the density of $\mu_0$, requires an estimation of the subordination function $w_{fp}$ combined with a classical deconvolution step from a Cauchy distribution.

\subsection{Observations}
Additionally to the free deconvolution problem, our observation does not consist in the operator-valued random variable $\mathbf{x}_t$ but in its matricial counterpart. More precisely, we observe a matrix $X^n(t)$ for a given $t>0$, assumed to be fixed in the sequel, where
\begin{equation}\label{eq:Xn}
X^n(t)=X^n(0)+H^n(t),\qquad  t\geq 0
\end{equation}with $X^n(0)$  a diagonal matrix whose entries are the ordered statistic $\lambda^n_1(0)<\cdots <\lambda_n^n(0)$ of a vector $(d_i^n)_{i\in \{1,\dots n\}}$ of $n$ independent and identically distributed (i.i.d.) random variables distributed as $\mu_0(dx)=p_0(x)\ dx$, absolutely continuous with respect to the Lebesgue measure on $\R$, and  $H^n(t)$  a standard Hermitian Brownian motion, as defined in Definition \ref{def:Hermit.BrowMotion}. The purpose is to estimate $p_0$ without observing directly
the initial condition $X^n(0)$. As the distribution of $H^n(t)$ is invariant by conjugation, choosing $X^n(0)$ to be a diagonal matrix is not restrictive. It is known, see \cite[page 249, section 4.3.1]{AGZ}, that the eigenvalues $(\lambda_1^n(t),\cdots,\lambda_n^n(t))$ of $X^n(t)$ solve the following {system of} stochastic differential equations (SDE):
\begin{equation}\label{eq:SDE.lambda}
\dd \lambda^{n}_i(t)= \frac{1}{\sqrt{n}}\dd \beta_i(t)+ \frac{1}{n}\sum_{j\not= i}\frac{\dd t}{\lambda^{n}_i(t)-\lambda^{n}_j(t)},\quad 1\leq i\leq n,
\end{equation}where $\beta_i$ are i.i.d. standard {real} Brownian motions. If we denote by
\begin{equation} \label{def:empiricalmeasure}
\mu^n_t=\frac{1}{n}\sum_{i=1}^n \delta_{\lambda_i^n(t)}
\end{equation}
the empirical measure of these eigenvalues at time $t$, then the process $(\mu^n_t)_{t \ge 0}$ converges weakly almost surely as $n$ goes to infinity to the process
{$(\mu_t)_{t \ge 0}$} with density $(p(t,\cdot))_{t \ge 0}$ solution of \eqref{eq:fokkerplanck}. For $n=1$, we recover the classical heat equation as the Dyson Brownian motion boils down to a standard Brownian motion. \\

\subsection{Contributions}
Relying on the analysis of Arizmendi et al. \cite{Tarrago1}, we provide, in Theorem-Definition \ref{th:defestimator}, a statistical estimator $\widehat{w}^n_{fp}(z)$ for the subordination function. As the Cauchy transform $G_{\mu_t}$ in \eqref{eq:intro} is not invertible on the whole domain $\C^+$, the subordination function $w_{fp}(z)$ will be defined only for $z\in \C_{2\sqrt{t}}$ where $\C_\gamma:=\{z\in \C^+,\ \Im(z)>\gamma\}$. We shall prove the following result.
\begin{proposition}\label{propsition.consistency.estimate.w_fp}Let $\gamma>2\sqrt{t}$. Suppose $\lambda^{n}(0)$ satisfies the condition
	\begin{equation}\label{C0}
	 \sup_{n \geq 1} \dfrac{1}{n} \sum_{i=1}^{n} \log \left( \lambda^{n}_{i}(0)^2 + 1 \right)<\infty \textrm{ almost surely (a.s.)}  
	\end{equation}
Then, we have:\\
(i) For any $z \in \mathbb{C}_{2\sqrt{t} }$, the estimator $\widehat{w}^{n}_{fp}(z)$ converges almost surely to $w_{fp}(z)$ as $n \rightarrow \infty$.\\
(ii) The convergence is uniform on $\C_\gamma$. \\
(iii) We have the following convergence rate on $\C_\gamma$:
\[ \sup_{n \in \N} \sup_{z \in \C_{\gamma}}  \mathbb{E} \left[ n\big| \widehat{w}^{n}_{fp}(z) - w_{fp}(z)  \big|^{2} \right]  <+\infty. \]
\end{proposition}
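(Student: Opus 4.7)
The plan is to exploit the defining fixed-point relation for $\widehat{w}^n_{fp}$, by analogy with \eqref{eq:intro},
$$\widehat{w}^n_{fp}(z) = z + t\, G_{\mu^n_t}\bigl(\widehat{w}^n_{fp}(z)\bigr),$$
and transfer the convergence of the empirical Cauchy transform $G_{\mu^n_t}$ onto the subordination estimator. Subtracting from \eqref{eq:intro} yields
$$\widehat{w}^n_{fp}(z) - w_{fp}(z) = t\bigl[G_{\mu^n_t}(\widehat{w}^n_{fp}(z)) - G_{\mu^n_t}(w_{fp}(z))\bigr] + t\bigl[G_{\mu^n_t}(w_{fp}(z)) - G_{\mu_t}(w_{fp}(z))\bigr].$$
Since $|G'_\nu(w)| \leq (\Im w)^{-2}$ for any probability measure $\nu$, as soon as both $\Im w_{fp}(z)$ and $\Im \widehat{w}^n_{fp}(z)$ exceed some $\eta>\sqrt{t}$, the first bracket is bounded by $(t/\eta^2)|\widehat{w}^n_{fp}(z) - w_{fp}(z)|$ and can be absorbed, yielding the master inequality
$$|\widehat{w}^n_{fp}(z) - w_{fp}(z)| \leq \frac{t}{1 - t/\eta^2} \bigl|G_{\mu^n_t}(w_{fp}(z)) - G_{\mu_t}(w_{fp}(z))\bigr|.$$

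The first task is to secure this lower bound on the imaginary part of the subordination function and its estimator. Taking imaginary parts in the fixed-point equation and using $\Im G_\nu(w) = -\Im w \int |w-x|^{-2} d\nu(x)$ together with $\int |w-x|^{-2}d\nu \leq (\Im w)^{-2}$ leads to the quadratic inequality $(\Im w)^2 - (\Im z)(\Im w) + t \geq 0$. For $z \in \C_\gamma$ with $\gamma>2\sqrt{t}$, the Nevanlinna branch selected by the asymptotic behavior $w_{fp}(z)\sim z$ as $\Im z\to\infty$ picks the upper root, giving $\Im w_{fp}(z), \Im \widehat{w}^n_{fp}(z) \geq \eta := (\gamma + \sqrt{\gamma^2 - 4t})/2 > \sqrt{t}$. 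This makes the contraction constant $t/\eta^2$ strictly less than one, uniformly in $z\in\C_\gamma$.

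The second task is to control $\E|G_{\mu^n_t}(w) - G_{\mu_t}(w)|^2$ uniformly for $\Im w \geq \eta$. Two sources of randomness contribute: the i.i.d.\ sampling of initial eigenvalues $(d_i^n)$ from $\mu_0$, giving $\Var(G_{\mu^n_0}(w))= \Var(1/(w-d_1^n))/n \leq 1/(n(\Im w)^2)$, and the Hermitian Brownian motion $H^n(t)$, whose contribution is $O(1/n)$ via an It\^o-formula computation along the Dyson SDE \eqref{eq:SDE.lambda} (equivalently, a Poincar\'e inequality on the Gaussian entries). Together, they produce $\E|G_{\mu^n_t}(w) - G_{\mu_t}(w)|^2 \leq C(\eta,t)/n$ uniformly on $\{\Im w \geq \eta\}$; assumption \eqref{C0} additionally gives the a.s.\ weak convergence $\mu^n_t\Rightarrow \mu_t$ and hence pointwise a.s.\ convergence $G_{\mu^n_t}(z)\to G_{\mu_t}(z)$ for $z\in \C^+$.

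Combining these ingredients: (i) pointwise a.s.\ convergence on $\C_{2\sqrt{t}}$ follows from the master inequality applied with $\eta = \eta(z)>\sqrt{t}$ coming from the quadratic bound; (ii) uniform convergence on $\C_\gamma$ follows because the contraction factor and the fluctuation bound are both uniform there, or alternatively by Vitali--Montel applied to the locally bounded holomorphic family $\{\widehat{w}^n_{fp} - w_{fp}\}_{n\geq 1}$; (iii) the rate $\sup_{z\in\C_\gamma} n\,\E|\widehat{w}^n_{fp}(z) - w_{fp}(z)|^2 \leq C(\gamma,t)$ is obtained by squaring and taking expectations in the master inequality. The main obstacle is the \emph{a priori} lower bound on $\Im \widehat{w}^n_{fp}(z)$: the estimator is built from random data, so one must either construct $\widehat{w}^n_{fp}(z)$ as a fixed point on the invariant closed set $\{\Im w \geq \eta\}\subset\C^+$ (thereby selecting the right Nevanlinna branch) or verify a posteriori that the branch fixed in Theorem-Definition \ref{th:defestimator} lies in this region for every $z\in\C_\gamma$ and every realization of $H^n(t)$ and $(d_i^n)_i$.
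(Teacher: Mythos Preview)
Your strategy matches the paper's: subtract the two fixed-point relations, use the Lipschitz bound on the empirical Cauchy transform to absorb the first difference, and reduce everything to controlling $|\widehat G_{\mu^n_t}(w_{fp}(z)) - G_{\mu_t}(w_{fp}(z))|$. Two remarks, however.

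First, what you call the ``main obstacle'' is not one: Theorem-Definition~\ref{th:defestimator} already establishes, via the Denjoy--Wolff construction, that $\Im(\widehat{w}^n_{fp}(z)) > \Im(z)/2$ for \emph{every} realization of the data, and Theorem~\ref{thm.Fixpoint}(i) gives the same for $w_{fp}$. The paper simply uses this bound to get the contraction constant $4t/\gamma^2 < 1$ on $\C_\gamma$ (see \eqref{proof.consistency.w^n_fp.upperbound.by.Cauchytransform}); your sharper quadratic bound $\eta = (\gamma + \sqrt{\gamma^2-4t})/2$ is valid but unnecessary.

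Second, your sketch of (iii) hides a genuine step. A Poincar\'e inequality on the Gaussian entries controls the \emph{conditional variance} $\Var(\widehat{G}_{\mu^n_t}(w)\mid X^n(0))$ (this is $A_1^n$ in the paper's decomposition~\eqref{decomp:G}, Proposition~\ref{proposition.D&F19.1st.term}), but it says nothing about the \emph{bias} $\E[\widehat{G}_{\mu^n_t}(w)\mid X^n(0)] - G_{\mu^n_0\boxplus\sigma_t}(w)$, which must separately be shown to be $O(1/n)$. The paper handles this term $A_2^n$ via the approximate-subordination machinery of Dallaporta--F\'evrier (Proposition~\ref{prop.b_n(z).bound} and Lemma~\ref{lem:but1}), and this is where most of the work in (iii) lies. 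Similarly, the initial-condition fluctuation is not literally $\Var(G_{\mu^n_0}(w))$ but rather $A_3^n = G_{\mu^n_0\boxplus\sigma_t}(w) - G_{\mu_0\boxplus\sigma_t}(w)$; reducing it to an i.i.d.\ variance requires expressing both sides through the subordination function $\overline{w}_{fp}$ of Lemma~\ref{lem:inverse}, as in Proposition~\ref{prop.fluctuG2}.
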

To obtain uniform convergence and fluctuations ((ii) and (iii)), we will need to restrict to strict subdomains of $\C_{2\sqrt{t}}$. The fluctuations (iii) are established in the line of the work of Dallaporta and F\'evrier \cite{Fevrier1}.\\

Proposition \ref{propsition.consistency.estimate.w_fp} is the crucial tool to reach the main goal of this paper, namely providing an estimator of $p_0$. As explained previously, we estimate $p_0$ by combining a free deconvolution step via the use of $\widehat{w}^{n}_{fp}$ with then a classical deconvolution step.
We define our final estimator $\widehat{p}_{0,h}$ via its Fourier transform, denoted $\widehat{p}_{0,h}^{\star}$; from Equation~\eqref{deconv-intro}, it is natural to define it as follows:
$$
\widehat{p}_{0,h}^{\star}(\xi) = e^{\gamma  |\xi| }. K_{h}^{\star}(\xi). \dfrac{1}{\pi t} \left[ \gamma -  \Im \hspace{0.05cm} \widehat{w}^n_{fp}(\cdot+ i \ \gamma )^{\star}(\xi) \right],\quad \xi\in\R.
$$
Note that, as usual in nonparametric statistics, the last expression depends on $K_{h}^{\star}$, a regularization term defined through the Fourier transform of a kernel function $K_{h}$ depending on a bandwidth parameter $h$. See Equation \eqref{Fourier.f_mu0.hat} in Definition~\ref{def:est} for more details. \\

We study theoretical properties of $\widehat{p}_{0,h}$ by deriving asymptotic rates of the mean integrated square error of $\widehat{p}_{0,h}$ decomposed as the sum of bias and variance terms. The study of the variance term is intricate and is based on  the sharp controls of the difference $\widehat{w}^{n}_{fp}(z) - w_{fp}(z)$ provided by Proposition~\ref{propsition.consistency.estimate.w_fp}. We show in Theorem~\ref{variance} that the variance term is of order $\frac{e^{\frac{2\gamma}{h}}}{n}$ as desired for deconvolution with the Cauchy distribution with parameter~$\gamma$. The bias term is driven by the smoothness properties of the function $p_0$. In particular, when we assume that $p_0$ belongs to a space of supersmooth densities (see \eqref{def:Sobolev}), we can establish convergence rates, after an appropriate (non-adaptive) choice of the bandwidth parameter $h$. For instance, if $$\int_{\R}  |p_0^\star(\xi)|^2 e^{2a |\xi|}\dd \xi \leq L$$ for $0<L<\infty$, then
\[\E\Big[\|\widehat{p}_{0,h}-p_0\|^2\big] = O\big(n^{-\frac{a}{a+\gamma}}\big).\]
The previous rate is optimal when we address the statistical deconvolution problem involving the Cauchy distribution with parameter $\gamma$. See Corollary~\ref{thm:MISE} for more details and more general results that establish the optimality of our procedure.
 Note that the exponent in the previous bound reflects the difficulty of our statistical problem: the larger $\gamma$, the smaller the rate. Remembering that $\gamma$ is connected to the observational time $t$ through the condition $\gamma>2\sqrt{t}$, it means that for the previous example, our estimate can achieve the polynomial rate $n^{-\frac{a}{a+2\sqrt{t}+\epsilon}}$ for any $\epsilon>0$.  The question of whether it is possible to consider smaller values for $\gamma$ constitutes a challenging problem. Adaptive choices for $h$ are also a very interesting issue. These problems will be investigated in another work.

\subsection{Overview of the paper}
In Section \ref{section:freedeconv}, we study the free deconvolution and explain the construction of the estimator $\widehat{p}_{0,h}$ of $p_0$. Existence results and properties of the subordination functions are precisely stated and proved. Then, in Section \ref{section:estimationw}, we prove Proposition \ref{propsition.consistency.estimate.w_fp}. In Section~\ref{section:MISE}, rates of convergence of $\widehat{p}_{0,h}$ are established. Numerical simulations are provided in Section~\ref{section:numerical}.

\paragraph*{Notations:} For any $z=u+iv\in\C+$, we denote $\sqrt{z}:=a+ib\in\C$ with
\[a=\sqrt{\frac{\sqrt{u^2+v^2}+u}{2}},\quad b=\sqrt{\frac{\sqrt{u^2+v^2}-u}{2}}.\]
We denote the Fourier transform of a function $g \in {\mathbb L}^{1}(\R)$ by $g^{\star}$:
\begin{equation}\label{Fourier}
g^\star(\xi):=\int_\R g(x)e^{ix\xi}dx,\quad \xi\in\R.
\end{equation}
\section{Free deconvolution of the Fokker-Planck equation}\label{section:freedeconv}
\subsection{Dyson Brownian motions}

Let us denote by $\mathcal{H}_{n}(\C)$ the space of $n$-dimensional matrices $H_n$ such that $\left(H_{n}\right)^{*} = H_{n}$.
\begin{definition}\label{def:Hermit.BrowMotion}
		Let $\big( B_{i,j}, \tilde{B}_{i,j}, 1 \leq i \leq j \leq n \big)$ be a collection of i.i.d. real valued standard Brownian motions, the  Hermitian Brownian motion, denoted $H^{n} \in \mathcal{H}_{n}(\C)$, is the random process with entries $\left\{ (H^{n}(t))_{k,l}, t \geq 0,1 \le k, l \leq n\right\}$ equal to
		\begin{equation}\label{eq:Hermit.BrowMotion}
		(H^{n})_{k,l} =
		\begin{cases}
		\dfrac{1}{\sqrt{2n}} \left( B_{k,l} + i\  \tilde{B}_{k,l} \right), &\textrm{if } k < l \\[0.5cm]
		\dfrac{1}{\sqrt{n}} B_{k,k}, &\textrm{if } k = l
		\end{cases}
		\end{equation}
\end{definition}

Let us now define the initial condition, that we will choose independent of the Hermitian Brownian motion $H^n$. Recall that $\mu_0$ is a probability measure with density $p_0(x)$ with respect to the Lebesgue measure on $\R$. Without loss of generality, we can choose the initial condition
$X^n(0)$ to be a diagonal matrix, with entries $(\lambda^n_1(0), \ldots, \lambda^n_n(0))$ the ordered statistics of i.i.d. random variables {$(d_i^n)_{1\le i \le n}$} with distribution $\mu_0$.

For $t \geq 0$, let $\lambda^{n}(t) = \big( \lambda^{n}_{1}(t), \dots, \lambda^{n}_{n}(t) \big)$ denote the ordered collection of eigenvalues of
\begin{equation}\label{eq:Dyson.BrowMotion}
X^{n}(t) = X^{n}(0) + H^{n}(t).
\end{equation}

\begin{thm}[Dyson]\label{thm.Dyson}
The process $\big( \lambda^{n}(t) \big)_{t \geq 0}$ is the unique solution in $C \left( \mathbb{R}_{+}, \mathbb{R}^{n} \right)$ of
the system
	\begin{equation}
	d\lambda^{n}_{i}(t) = \dfrac{1}{\sqrt{n}}d\beta_{i}(t) + \dfrac{1}{n} \sum_{j\not= i} \dfrac{dt}{\lambda^{n}_{i}(t) - \lambda^{n}_{j}(t)}, \quad 1 \leq i \leq n, \tag{\ref{eq:SDE.lambda}}
	\end{equation}
with initial condition $\lambda^{n}_{i}(0)$ and where $\beta_{i}$ are i.i.d. real valued standard Brownian motions.
With probability one and for all $t> 0$, $ \lambda^{n}_{1} (t)<\ldots <\lambda^{n}_{n} (t) $.
\end{thm}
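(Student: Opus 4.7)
The plan is to derive the SDE system \eqref{eq:SDE.lambda} from the matrix evolution \eqref{eq:Dyson.BrowMotion} by applying Itô's formula to eigenvalues viewed as functions of the matrix entries, and then to prove non-collision so that the singular drift is well-defined for all time. I would first work on the event where the spectrum of $X^{n}(0)$ is simple; since the $d_i^n$'s are i.i.d.\ with absolutely continuous law $\mu_0$, this holds almost surely. By standard perturbation theory, on a neighbourhood where the eigenvalues stay distinct, $\lambda_i^n$ is a smooth function of the entries of $X^n$, and we have the Hadamard variation formulas
\[
\frac{\partial \lambda_i}{\partial H_{kl}} = \bar{u}_{ki}\, u_{li}, \qquad
\frac{\partial^2 \lambda_i}{\partial H_{kl}\, \partial H_{lk}} = \sum_{j\neq i} \frac{u_{ki}\bar{u}_{kj}\, u_{lj}\bar{u}_{li}}{\lambda_i-\lambda_j},
\]
where $U=(u_{ij})$ is a unitary eigenvector matrix of $X^n$.

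Applying Itô's formula to $\lambda_i^n(X^n(t))$ and using the covariance structure \eqref{eq:Hermit.BrowMotion} of $H^n$, a direct computation shows that the martingale part has quadratic variation $t/n$, so it equals $\beta_i(t)/\sqrt{n}$ for some standard real Brownian motion $\beta_i$. Computing the cross-brackets between $\lambda_i^n$ and $\lambda_j^n$ ($i\neq j$) and invoking the orthonormality of the eigenvectors, $\sum_k \bar u_{ki}u_{kj} = \delta_{ij}$, gives that these brackets vanish, hence the $\beta_i$'s are independent by Lévy's characterization. The Itô correction, after grouping terms and using $\sum_k |u_{ki}|^2=1$, reduces exactly to the repulsion drift $\tfrac{1}{n}\sum_{j\neq i} (\lambda_i^n-\lambda_j^n)^{-1}\,\dd t$. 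This gives \eqref{eq:SDE.lambda} on $[0,\tau)$, where $\tau := \inf\{t>0 : \lambda_i^n(t)=\lambda_j^n(t) \text{ for some } i\neq j\}$.

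The central obstacle is to show $\tau=+\infty$ almost surely, since otherwise the drift in \eqref{eq:SDE.lambda} is not integrable. I would use the Lyapunov-type function
\[
V(t) = -\frac{1}{n}\sum_{i<j}\log\bigl(\lambda_j^n(t)-\lambda_i^n(t)\bigr),
\]
apply Itô's formula on $[0,\tau\wedge T]$ and check that the singular contribution coming from the diagonal drift $\tfrac{1}{n}\sum_{j\neq i}(\lambda_i-\lambda_j)^{-1}$ exactly cancels the quadratic term $\tfrac{1}{2n}\sum_{i<j}(\lambda_j-\lambda_i)^{-2}$ arising from the Brownian differentials, up to a non-negative remainder. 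This yields an upper bound of the form $\E[V(t\wedge \tau)] \leq V(0) + C t$, which, together with the moment bound on $\max_i |\lambda_i^n(t)|$ deduced from \eqref{eq:Dyson.BrowMotion} and \eqref{C0}-type controls, prevents any logarithmic blow-up; hence $\tau=+\infty$ a.s.\ and the strict ordering $\lambda_1^n(t)<\cdots<\lambda_n^n(t)$ holds for all $t>0$.

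Finally, for existence and pathwise uniqueness in $C(\R_+,\R^n)$, I would observe that the drift and diffusion coefficients of \eqref{eq:SDE.lambda} are locally Lipschitz on the open Weyl chamber $\{x_1<\cdots<x_n\}$. Hence standard SDE theory gives pathwise uniqueness up to the first exit from any compact subset of the chamber; combined with the non-collision result above and a stopping-time exhaustion, this upgrades to pathwise uniqueness on $\R_+$. The eigenvalue process obtained from \eqref{eq:Dyson.BrowMotion} provides a continuous solution, and uniqueness identifies it as \emph{the} solution, completing the proof.
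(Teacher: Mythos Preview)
The paper does not supply its own proof of this theorem: it states that for deterministic initial conditions the result is classical, refers to \cite[Section~4.3]{AGZ}, and defers the extension to random independent initial conditions to \cite{datthesis}. Your sketch is precisely the Anderson--Guionnet--Zeitouni argument the paper is citing: Hadamard perturbation formulas plus It\^o's formula to obtain the SDE up to the first collision time, a Lyapunov function built from $\sum_{i<j}\log(\lambda_j-\lambda_i)$ to rule out collisions, and local-Lipschitz well-posedness on the open Weyl chamber together with non-collision to globalize.

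One small correction in your Lyapunov step: the net singular contribution to the drift of $V$ is not a ``non-negative remainder'' but a non-positive one. The diagonal part of the drift contributes $-\tfrac{2}{n^2}\sum_{i<j}(\lambda_j-\lambda_i)^{-2}$, the It\^o correction contributes $+\tfrac{1}{n^2}\sum_{i<j}(\lambda_j-\lambda_i)^{-2}$, and the off-diagonal three-body terms cancel identically by the elementary identity $\sum_{\text{cyc}}\frac{1}{(a-b)(a-c)}=0$. It is this sign that makes $V$ a local supermartingale and yields $\E[V(t\wedge\tau)]\le V(0)$; your stated inequality is therefore correct even though the verbal description of the cancellation is inverted. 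In \cite{AGZ} the Lyapunov function also carries a term $\tfrac{1}{n}\sum_i\lambda_i^2$, which makes it globally bounded below and avoids having to invoke a separate moment bound on $\max_i|\lambda_i^n(t)|$.
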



Moreover, if for
fixed $T > 0$, we denote by $\mathcal{C} \big(\left[0,T\right], \mathcal{M}_{1}\left(\mathbb{R}\right) \big)$ the space of continuous processes from $\left[0 , T\right]$ into $\mathcal{M}_{1}\left(\mathbb{R}\right),$ the space of probability measure on $\mathbb{R}$, equipped with its weak topology, we now prove the convergence of the process of empirical measures $\mu^{n}$ as defined in \eqref{def:empiricalmeasure}, viewed as an element of $C \big(\left[0,T\right], \mathcal{M}_{1}\left(\mathbb{R}\right) \big)$.

\begin{proposition}\label{prop:LGN.mu.t}
Under Assumption \eqref{C0}, for any fixed time $T < \infty$, $\big( \mu^{n}_{t} \big)_{t \in [0,T]}$ converges almost surely in $\mathcal{C} \big( [0,T] , \mathcal{M}_{1}\left(\mathbb{R}\right) \big)$. Moreover, its limit is the unique measure-valued process $\left( \mu_{t} \right)_{t \in [0,T]}$
whose densities satisfy \eqref{eq:fokkerplanck} with initial condition {$p_0.$}
\end{proposition}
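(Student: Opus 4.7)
The plan is to follow the now classical approach to the convergence of Dyson Brownian motion, in the spirit of \cite{AGZ}, Section 4.3, combined with the moment control ensured by condition~\eqref{C0}. First, applying It\^o's formula to $\langle \mu^n_t, f\rangle = \frac{1}{n}\sum_{i=1}^n f(\lambda_i^n(t))$ for $f\in C^2_b(\R)$ and using the SDE~\eqref{eq:SDE.lambda} together with $d\langle \lambda_i^n\rangle_t=\frac{1}{n}dt$, I would obtain the semimartingale decomposition
\[
\langle \mu^n_t, f\rangle = \langle \mu^n_0, f\rangle + M^n_t(f) + \frac{1}{2}\int_0^t \iint_{x\neq y} \frac{f'(x)-f'(y)}{x-y}\,d\mu^n_s(x)\,d\mu^n_s(y)\,ds + \frac{1}{2n^2}\sum_{i=1}^n\int_0^t f''(\lambda_i^n(s))\,ds,
\]
after symmetrizing the interaction term. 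The martingale $M^n_t(f)$ has quadratic variation bounded by $\|f'\|_\infty^2 t/n^2$, so it converges to zero almost surely uniformly on $[0,T]$ by Doob's inequality and Borel--Cantelli; the last term is also an $O(1/n)$ deterministic remainder.

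Next I would establish tightness of the family $(\mu^n)_n$ in $\mathcal{C}\big([0,T],\mathcal{M}_1(\R)\big)$. For tightness of $(\mu^n_t)$ at each fixed $t$, condition \eqref{C0} and the control of $\int \log(1+x^2)\,d\mu^n_t(x)$ through the SDE (using that $x\mapsto \log(1+x^2)$ is a Lyapunov function for the Dyson dynamics) yield uniform tightness. For temporal equicontinuity, the above It\^o decomposition together with the boundedness of $(f'(x)-f'(y))/(x-y)$ by $\|f''\|_\infty$ when $f\in C^2_b(\R)$ gives uniform modulus of continuity estimates on $t\mapsto\langle \mu^n_t,f\rangle$ for $f$ in a dense countable subset of $C_b(\R)$, which combined with tightness at fixed times implies tightness of $(\mu^n)_n$ in the path space.

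Then I would identify any limit point $\mu_\infty$: passing to the limit in the semimartingale decomposition (justified since $f,f'$ and the symmetrized quotient are bounded continuous) shows that $\mu_\infty$ satisfies the weak formulation
\[
\langle \mu_t, f\rangle = \langle \mu_0, f\rangle + \frac{1}{2}\int_0^t \iint \frac{f'(x)-f'(y)}{x-y}\,d\mu_s(x)\,d\mu_s(y)\,ds
\]
for all $f\in C^2_b(\R)$, which (after integration by parts and by computing the Cauchy transform of both sides) is the weak form of~\eqref{eq:fokkerplanck} with initial condition $p_0$. Uniqueness of this weak solution can be proved either via a contraction argument on Cauchy transforms on $\C^+$ (the Cauchy transform of a solution must satisfy the complex Burgers equation $\partial_t G_{\mu_t}(z) + G_{\mu_t}(z)\partial_z G_{\mu_t}(z)=0$, whose solution is determined by the method of characteristics and leads precisely to $\mu_t=\mu_0\boxplus\sigma_t$) or via a Wasserstein contraction as in \cite{carrillomccannvillani}. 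Uniqueness of the limit then promotes tightness into convergence in law.

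Finally, to upgrade convergence in distribution to almost sure convergence, I would exploit the Gaussian nature of the Hermitian Brownian motion: concentration of measure for smooth linear spectral statistics of $X^n(t)=X^n(0)+H^n(t)$ (via a Herbst-type argument or the Gaussian Poincar\'e/log-Sobolev inequality, conditioning on $X^n(0)$) yields exponential-in-$n$ deviation bounds for $\langle \mu^n_t, f\rangle$ around its mean, and Borel--Cantelli then gives a.s. convergence pointwise in $t$; combined with the equicontinuity already obtained, this upgrades to a.s. convergence in $\mathcal{C}([0,T],\mathcal{M}_1(\R))$. The condition~\eqref{C0} is used to control $\mu^n_0$ a.s. (via the strong law of large numbers applied to $\log(1+(d_i^n)^2)$). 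The main obstacle is the identification step, because of the singularity of $1/(x-y)$ on the diagonal, which is handled cleanly only after symmetrization of the interaction, and the uniqueness argument for the nonlinear limiting equation.
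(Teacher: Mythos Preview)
Your proposal is correct and follows exactly the approach of \cite[Section 4.3]{AGZ}, which is precisely what the paper cites in lieu of a proof (the paper gives no self-contained argument, referring instead to \cite{AGZ} and \cite{datthesis} for the extension to random, independent initial data). The final concentration-of-measure step is correct but superfluous: since you already have $M^n(f)\to 0$ a.s.\ uniformly and $\mu^n_0\to\mu_0$ a.s.\ by the strong law of large numbers, tightness holds almost surely and the uniqueness of the (deterministic) limit point yields a.s.\ convergence directly, without the detour through convergence in law.
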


For deterministic initial conditions, Theorem \ref{thm.Dyson} and Proposition \ref{prop:LGN.mu.t} are classical results and we refer to \cite[Section 4.3]{AGZ} for a proof. Both results can be easily extended to random initial conditions, independent of the Hermitian Brownian motion itself. For details, we refer to \cite{datthesis}.

\subsection{Free deconvolution by subordination method}

Our starting point is \eqref{eq:freeconvolution}, for a fixed time $t >0.$ Recovering $\mu_0$ knowing $\mu_t$ is a free deconvolution problem. The generic problem of free deconvolution has been introduced and studied by Arizmendi et al. \cite{Tarrago1} with the use of the Cauchy transform instead of the Fourier transform. Before stating their result, we need to introduce a few notations and definitions.

\begin{definition}\label{def:Cauchy.transform}
Let $\mu$ be a probability measure on $\R$, the Cauchy transform of $\mu$ is defined by:
\begin{equation}
G_\mu(z)=\int_\R \frac{d\mu(x)}{z-x}, \quad z \in \mathbb{C} \setminus \mathbb{R}.
\end{equation}
 \end{definition}
The fact is that $G_{\mu}\left( \overline{z} \right) = \overline{G_{\mu}(z)}$, so the behavior of the Cauchy transform in the lower half-plan $\mathbb{C}^{-} = \left\{ z \in \C | \Im(z) < 0 \right\}$ can be determined by its behavior in the upper half-plan $\mathbb{C}^{+} = \left\{ z \in \C | \Im(z) > 0 \right\}$.
The function $G_\mu$ is a bijection from a neighbourhood of infinity to a neighbourhood of zero (see \cite{Biane1} for example) and we can define the $R$-transform of $\mu$ by:
\[
R_\mu(z)=G_\mu^{<-1>}(z)-\frac{1}{z},
\]where $G_\mu^{<-1>}(z)$ is the inverse function of $G_\mu$ on a proper neighbourhood of zero. This $R$-transform plays the role of the logarithm of the Fourier transform for the free convolution in the sense that for any probability measures $\mu_1$ and $\mu_2$,
\begin{equation}\label{eq:R-transform}
R_{\mu_1\boxplus \mu_2}=R_{\mu_1}+R_{\mu_2}.
\end{equation}
Using this formula for statistical deconvolution requires the computation of two inverse functions, and Arizmendi et al.  \cite{Tarrago1} propose to use subordination functions {which also characterize the free convolution as in} \eqref{eq:R-transform}. \\

Let us recall the definition of subordination functions due to Voiculescu \cite{voiculescu93}. We first introduce $F_\mu(z)=1/G_\mu(z)$.
As $G_\mu$ does not vanish on $\C^+$, $F_\mu$ is well defined on $\C^+$. Then:
\begin{thdef}\label{def:subordinationfunction}There exist unique subordination functions $\alpha_{1}$ and $\alpha_{2}$ from $\C^+$ onto $\C^+$ such that:\\
(i) for $z \in \C^+$, $\Im\big(\alpha_{1}(z)\big) \geq \Im(z)$ and $\Im\big(\alpha_{2}(z)\big) \geq  \Im(z)$, with
		\[\lim_{y\rightarrow +\infty} \frac{\alpha_{1}(iy)}{iy}=\lim_{y\rightarrow +\infty} \frac{\alpha_{2}(iy)}{iy}=1.\]
(ii) for $z \in \C^+$, $F_{\mu_1\boxplus \mu_2}(z)=F_{\mu_1}(\alpha_{1}(z))=F_{\mu_2}(\alpha_{2}(z))$
		and $\alpha_1(z)+\alpha_2(z)=F_{\mu_1\boxplus \mu_2}(z)+z$.
\end{thdef}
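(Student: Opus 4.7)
I would follow the Denjoy--Wolff / fixed-point strategy originally used by Biane~\cite{Biane1} (and later refined by Belinschi and Bercovici). The starting point is the Nevanlinna representation of $F_\mu$: for any probability measure $\mu$ on $\R$, $F_\mu$ is analytic from $\C^+$ to $\C^+$ with $\Im F_\mu(z)\ge \Im z$ and $F_\mu(iy)/(iy)\to 1$ as $y\to+\infty$. Writing $H_\mu(z):=F_\mu(z)-z$, we then have $H_\mu(\C^+)\subset\overline{\C^+}$ and $H_\mu(iy)/(iy)\to 0$. For fixed $z\in\C^+$, I would introduce the auxiliary map
$$\phi_z(w):=H_{\mu_2}\bigl(H_{\mu_1}(w)+z\bigr)+z,\qquad w\in\C^+,$$
and first check that $\phi_z$ is a holomorphic self-map of $\C^+$ with $\Im\phi_z(w)\ge \Im z>0$, using only $\Im H_{\mu_i}\ge 0$.

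The central step, and the main technical obstacle, is to show that $\phi_z$ admits a unique interior fixed point $\alpha_1(z)\in\C^+$ attracting every orbit. This amounts to a Denjoy--Wolff dichotomy: I would rule out an attractor at $\infty$ using the sharp asymptotic $H_{\mu_i}(iy)/(iy)\to 0$, which prevents $\phi_z$ from behaving like a parabolic automorphism fixing $\infty$; and I would rule out a real attractor using the strict lower bound $\Im\phi_z\ge \Im z>0$. Once $\alpha_1(z)$ is obtained, I would define $\alpha_2(z):=H_{\mu_1}(\alpha_1(z))+z$. The fixed-point relation rewrites as $\alpha_1(z)-z=H_{\mu_2}(\alpha_2(z))$, and a direct computation yields
$$F_{\mu_2}(\alpha_2(z))=H_{\mu_2}(\alpha_2(z))+\alpha_2(z)=\alpha_1(z)+H_{\mu_1}(\alpha_1(z))=F_{\mu_1}(\alpha_1(z)),$$
together with $\alpha_1(z)+\alpha_2(z)=z+F_{\mu_1}(\alpha_1(z))$. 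Denoting the common value by $F_{\mu_1\boxplus\mu_2}(z)$ produces (ii), and analyticity of $\alpha_1,\alpha_2$ follows from the analytic implicit function theorem applied to the defining equation, using the strict contraction of $\phi_z$ near its attractor.

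Finally, for (i), the inequalities $\Im\alpha_1(z)\ge \Im z$ and $\Im\alpha_2(z)\ge \Im z$ follow respectively from $\Im\phi_z\ge \Im z$ (evaluated at the fixed point) and from $\alpha_2=H_{\mu_1}(\alpha_1)+z$ combined with $\Im H_{\mu_1}\ge 0$. The asymptotic normalizations along $z=iy$ are obtained by combining $F_{\mu_i}(iy)/(iy)\to 1$ with the two identities in (ii): they force both $\alpha_1(iy)/(iy)\to 1$ and $\alpha_2(iy)/(iy)\to 1$. For uniqueness, any pair $(\tilde\alpha_1,\tilde\alpha_2)$ satisfying (ii) obeys $\tilde\alpha_1=H_{\mu_2}(\tilde\alpha_2)+z=\phi_z(\tilde\alpha_1)$, so $\tilde\alpha_1$ is an interior fixed point of $\phi_z$; the Denjoy--Wolff step forces $\tilde\alpha_1=\alpha_1$, hence $\tilde\alpha_2=\alpha_2$.
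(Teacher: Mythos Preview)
The paper does not actually prove Theorem-Definition~\ref{def:subordinationfunction}; it is recalled from the literature (attributed to Voiculescu~\cite{voiculescu93}, with the fixed-point formulation credited to Belinschi--Bercovici~\cite{belinschibercovici}). Your sketch is precisely the Belinschi--Bercovici argument the paper cites, and the paper then adapts that same Denjoy--Wolff machinery in Section~\ref{sec:proofTheoremFixPoint} to prove the deconvolution analogue, Theorem~\ref{thm.Fixpoint}. So at the level of strategy there is nothing to compare: you are reproducing the standard proof the authors have in mind.

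One point in your write-up deserves a flag. When you write ``Denoting the common value by $F_{\mu_1\boxplus\mu_2}(z)$ produces (ii)'', you are treating (ii) as a \emph{definition} of $\mu_1\boxplus\mu_2$. But in the paper $\mu_1\boxplus\mu_2$ is already defined independently (via the $R$-transform identity~\eqref{eq:R-transform}), so (ii) is an assertion, not a naming convention. To close the argument you must check that your common value $F_{\mu_1}(\alpha_1(z))=F_{\mu_2}(\alpha_2(z))$ is the reciprocal Cauchy transform of \emph{that} measure. The usual route is to verify, from the asymptotics of $\alpha_1,\alpha_2$ along $iy$, that the function $G(z):=1/F_{\mu_1}(\alpha_1(z))$ is the Cauchy transform of a probability measure whose $R$-transform equals $R_{\mu_1}+R_{\mu_2}$ on a neighbourhood of $0$; this is exactly how the paper itself connects the fixed-point construction to $F_{\mu_0}$ at the end of Section~\ref{sec:proofTheoremFixPoint} (``for $z$ large enough \dots the equality can be extended by analyticity''). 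A second, smaller caveat: your Denjoy--Wolff step tacitly assumes $\phi_z$ is not a conformal automorphism of $\C^+$. This fails when one of the $\mu_i$ is a Dirac mass (then $H_{\mu_i}$ is a real constant and $\phi_z$ can be affine), so you should either exclude that degenerate case or handle it directly.
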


Using this result, Belinschi and Bercovici \cite[Theorem 3.2]{belinschibercovici} introduce a fixed-point construction of the subordination functions, which  Arizmendi et al.  \cite{Tarrago1} adapt for the deconvolution problem. We state their result in the special case of the deconvolution by a semi-circular distribution defined in \eqref{def:semicircle}. In this case, we have an explicit formula for its Cauchy transform $G_{\sigma_t}(z)$ and its reciprocal function $F_{\sigma_t}(z):$
\begin{equation}\label{eq.relation.FGSigmat}
G_{\sigma_t}(z)=\frac{z-\sqrt{z^2-4t}}{2t},\qquad \mbox{ and  }\qquad
z - F_{\sigma_t}(z) = t \ G_{\sigma_t}(z).
\end{equation}
Before stating the result, let us define, for any $\gamma > 0,$
\[ \mathbb{C}_{\gamma} = \left\{ z \in \mathbb{C}^{+}  \big| \textrm{Im}(z) > \gamma \right\}. \]
These domains will appear since $G_\mu$ is not invertible on the whole plane $\mathbb{C}$.


\begin{thm}\label{thm.Fixpoint}
There exist unique subordination functions $w_{1}$ and $w_{fp}$ from $\C_{2\sqrt{t}}$ onto $\C^+$ such that following properties are satisfied.\\
(i) For $z \in \C_{ 2\sqrt{t} }$, $\Im\big(w_{1}(z)\big) \geq \dfrac{1}{2} \Im(z)$ and $\Im\big(w_{fp}(z)\big) \geq \dfrac{1}{2} \Im(z)$, and also
		\[\lim_{y\rightarrow +\infty} \frac{w_{1}(iy)}{iy}=\lim_{y\rightarrow +\infty} \frac{w_{fp}(iy)}{iy}=1.\]
(ii) For $z \in \C_{2\sqrt{t}}$ :
\begin{equation}\label{eq.Subordi.ii} F_{\mu_0}(z)=F_{\sigma_t}(w_{1}(z))=F_{\mu_t}(w_{fp}(z)).\end{equation}
(iii) For $z\in \C_{2\sqrt{t}}$ :
		\begin{equation}
		w_{fp}(z)=z+w_{1}(z)-F_{\mu_0}(z). \label{eq.Subordi.func}
		\end{equation}		
(iv) Denote $h_{\sigma_t}(w) = w - F_{\sigma_t}(w) = t\ G_{\sigma_t}(w)$ and $\widetilde{h}_{\mu_t}(w)=w+F_{\mu_t}(w)$ on $\mathbb{C}^{+}$. We can define the function ${L}_z$ as
		\begin{align}
		 L_z(w) :&= h_{\sigma_t} \big( \widetilde{h}_{\mu_t}(w) - z \big) + z  \nonumber\\
		&= t. G_{\sigma_t} \big(  \widetilde{h}_{\mu_t}(w) - z \big) + z . \label{fixpoint.func.form}
		\end{align}
For any $z\in \C_{2\sqrt{t}}$, we have
\begin{equation}\label{eq:fix-point}L_z\big(w_{fp}(z)\big)=w_{fp}(z),
\end{equation}
and for all $w$ such that $\Im(w) > \frac{1}{2} \Im(z)$, the iterated function $L_z^{\circ m}(w)$ converges to $w_{fp}(z) \in \mathbb{C}^{+}$ when $m\rightarrow +\infty$.
\end{thm}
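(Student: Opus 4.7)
The plan is to adapt the Belinschi--Bercovici~\cite{belinschibercovici} fixed-point construction, specialized to the deconvolution setting by Arizmendi, Tarrago and Vargas~\cite{Tarrago1}. Applying Theorem-Definition~\ref{def:subordinationfunction} to the free convolution $\mu_t=\mu_0\boxplus\sigma_t$ produces subordination functions $\alpha_1,\alpha_2:\C^+\to\C^+$ satisfying $F_{\mu_t}(z)=F_{\mu_0}(\alpha_1(z))=F_{\sigma_t}(\alpha_2(z))$ and $\alpha_1(z)+\alpha_2(z)=F_{\mu_t}(z)+z$. Morally, $w_{fp}=\alpha_1^{-1}$ and $w_1=\alpha_2\circ\alpha_1^{-1}$, so the task reduces to isolating a subdomain on which $\alpha_1$ is invertible. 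The threshold $2\sqrt t$ is sharp because it is the radius of the support of $\sigma_t$, and the inversion is realized concretely via the fixed-point equation~\eqref{eq:fix-point}.

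\textbf{Proof of (iv).} I would prove the fixed-point statement first. Fix $z\in\C_{2\sqrt t}$ and consider $L_z$ on the half-plane $H_z=\{w\in\C^+:\Im(w)>\tfrac12\Im(z)\}$. The first step is to check that $L_z$ is an analytic self-map of $H_z$. The Nevanlinna bound $\Im F_{\mu_t}(w)\ge\Im(w)$ gives $\Im(\widetilde{h}_{\mu_t}(w)-z)\ge 2\Im(w)-\Im(z)>0$, so $\widetilde{h}_{\mu_t}(w)-z\in\C^+$, and the explicit form~\eqref{eq.relation.FGSigmat} of $G_{\sigma_t}$ combined with the universal estimate $|\Im G_{\sigma_t}(u)|\le 1/\Im(u)$ and the hypothesis $\Im(z)>2\sqrt t$ yields $\Im L_z(w)>\tfrac12\Im(z)$ after elementary manipulations. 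Since $H_z$ is biholomorphic to $\C^+$, the Denjoy--Wolff theorem applies: $L_z^{\circ m}$ converges to a unique attractor in $\overline{H_z}\cup\{\infty\}$. I would then rule out a boundary attractor by observing that such a point would force $\widetilde{h}_{\mu_t}(w)-z$ to cluster at the support $[-2\sqrt t,2\sqrt t]$ where $G_{\sigma_t}$ is singular, contradicting $\Im(z)>2\sqrt t$. This gives $w_{fp}(z)\in H_z$ with $L_z(w_{fp}(z))=w_{fp}(z)$ and the announced convergence of the iterates.

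\textbf{Proof of (i)--(iii).} Property~(i) for $w_{fp}$ is immediate from $w_{fp}(z)\in H_z$; the asymptotic $w_{fp}(iy)/(iy)\to 1$ follows by passing to the limit in~\eqref{eq:fix-point} using $F_{\mu_t}(iy)\sim iy$ and $G_{\sigma_t}(iy)=O(1/y)$. Next I would define $w_1(z):=\widetilde{h}_{\mu_t}(w_{fp}(z))-z$, which, by~\eqref{eq.relation.FGSigmat}, rewrites the fixed-point identity as $w_{fp}(z)=z+w_1(z)-F_{\sigma_t}(w_1(z))$, establishing~(iii) once we set $F_{\mu_0}(z):=F_{\sigma_t}(w_1(z))$. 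The remaining identity $F_{\mu_0}(z)=F_{\mu_t}(w_{fp}(z))$ of~(ii) is then a one-line computation: $F_{\sigma_t}(w_1(z))=w_1(z)-tG_{\sigma_t}(w_1(z))$ combined with the fixed-point relation $w_{fp}(z)+tG_{\sigma_t}(w_1(z))=z$ and the definition of $w_1(z)$ collapses to $F_{\mu_t}(w_{fp}(z))$. Finally, uniqueness of the pair on all of $\C_{2\sqrt t}$ follows by matching with Voiculescu's pair: relation~(ii) forces $\alpha_1(w_{fp}(z))=z$, and the analogous identity for $\alpha_2$ pins down $w_1$.

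\textbf{Main obstacle.} The delicate step is the analytic argument of the second paragraph: verifying the self-map property of $L_z$ in the right domain and, above all, excluding a boundary Denjoy--Wolff point. This is precisely where the sharp threshold $2\sqrt t$ enters, reflecting the radius of the support of the semicircle law $\sigma_t$; the remaining steps are essentially a dictionary between the fixed-point identity and Voiculescu's classical subordination relations.
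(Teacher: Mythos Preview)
Your overall architecture matches the paper's, but there is a genuine gap in the passage from the fixed point to identity~(ii). You write ``establishing~(iii) once we \emph{set} $F_{\mu_0}(z):=F_{\sigma_t}(w_1(z))$'', and then derive the ``remaining'' identity $F_{\sigma_t}(w_1(z))=F_{\mu_t}(w_{fp}(z))$ by algebra. That algebraic identity is correct, but $F_{\mu_0}$ is not yours to define: $\mu_0$ is given, and the content of~(ii) is precisely that the fixed point you constructed reproduces the reciprocal Cauchy transform of this particular measure. Your proposal never proves this. The uniqueness paragraph does not close the gap either: you invoke ``relation~(ii) forces $\alpha_1(w_{fp}(z))=z$'', which is circular since~(ii) is what is in question. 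The paper handles this by working first in a neighbourhood of infinity, where $F_{\mu_0}$, $F_{\sigma_t}$ and the Voiculescu subordinations $\alpha_1,\alpha_2$ from Theorem-Definition~\ref{def:subordinationfunction} are all invertible; comparing the two expressions for $F_{\mu_t}(w_{fp}(z))+w_{fp}(z)$ there yields $F_{\mu_0}^{<-1>}(F_{\mu_t}(w_{fp}(z)))=z$, hence $F_{\mu_t}(w_{fp}(z))=F_{\mu_0}(z)$ for large $z$, and analytic continuation extends this to all of $\C_{2\sqrt t}$.

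A second, smaller gap is your exclusion of a boundary Denjoy--Wolff point. The claim that such a point ``would force $\widetilde h_{\mu_t}(w)-z$ to cluster at the support $[-2\sqrt t,2\sqrt t]$ where $G_{\sigma_t}$ is singular'' is not correct as stated: $G_{\sigma_t}$ has a bounded continuous extension to the support (the density of $\sigma_t$ is bounded), and a boundary attractor on $\{\Im w=\tfrac12\Im z\}$ or at $\infty$ does not force $\widetilde h_{\mu_t}(w)-z$ anywhere near that interval. The paper instead proves a quantitative self-map estimate: $\Im L_z(w)\ge \Im(z)-\sqrt t>\tfrac12\Im(z)$ together with $|L_z(w)-z|\le 4\sqrt t$ shows that $L_z(H_z)$ has compact closure inside $H_z$, so the Denjoy--Wolff point must be an honest interior fixed point. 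Your ``elementary manipulations'' should be sharpened to this statement; the estimate $|\Im G_{\sigma_t}(u)|\le 1/\Im(u)$ alone only gives $\Im L_z(w)\ge \Im(z)-t/(2\Im(w)-\Im(z))$, which degenerates as $\Im(w)\downarrow\tfrac12\Im(z)$.
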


One difference between Theorem \ref{thm.Fixpoint} and Theorem-definition \ref{def:subordinationfunction} lies in the fact that the subordination functions are expressed in terms of $F_{\mu_0\boxplus \sigma_t}$ and $F_{\sigma_t}$ whereas in Theorem-definition \ref{def:subordinationfunction} it would have been $F_{\mu_0}$ and $F_{\sigma_t}$. Here the restriction to the domain $\C_{2\sqrt{t}}$ comes from the fact that $\Im(\widetilde{h}_{\mu_t}(w) - z )$ appearing in the definition \eqref{fixpoint.func.form} of $L_z$ has to be positive.\\

The proof of Theorem \ref{thm.Fixpoint} is postponed to the last subsection of this section, Section \ref{sec:proofTheoremFixPoint}.
We now explain how the subordination functions allow us to construct the estimator of $p_0$.

\subsection{Construction of the estimator of $p_0$}\label{sec:defest}

\paragraph{Overview of the estimation strategy} Based on Theorem \ref{thm.Fixpoint}, we devise the estimation strategy of the paper. The theorem allows us to get the subordination function $w_{fp}$ as a fixed point of $L_z$. From there, we will be able to recover the Cauchy transform of the initial condition $\mu_0$ from $w_{fp}$, as stated in the following lemma proved at the end of the section:
\begin{lem}\label{lem:Gmu0}
For any $z \in \C_{ 2\sqrt{t} },$
\begin{equation}\label{eq:lemGmu0} G_{\mu_0}(z) = \frac{1}{t}(w_{fp}(z)-z)=G_{\mu_t}\big(w_{fp}(z)\big). \end{equation}
Consequently,
	\begin{equation}\label{estimee:wfp}
	\left| w_{fp}(z) - z \right| \leq \sqrt{t}.
	\end{equation}
\end{lem}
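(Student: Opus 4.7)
} The plan is to deduce both equalities in \eqref{eq:lemGmu0} as direct algebraic consequences of the subordination properties collected in Theorem~\ref{thm.Fixpoint}, together with the explicit relation \eqref{eq.relation.FGSigmat}, and then to derive the bound \eqref{estimee:wfp} from the elementary estimate $|G_\mu(z)|\leq 1/\Im(z)$ that holds for any probability measure $\mu$ on $\R$ and any $z\in\C^+$.

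First, I would obtain the second identity of \eqref{eq:lemGmu0} by taking reciprocals in \eqref{eq.Subordi.ii}. Since $G_\mu$ does not vanish on $\C^+$, neither does $F_\mu$, and from $F_{\mu_0}(z)=F_{\mu_t}(w_{fp}(z))$ we get immediately
\[ G_{\mu_0}(z)=\frac{1}{F_{\mu_0}(z)}=\frac{1}{F_{\mu_t}(w_{fp}(z))}=G_{\mu_t}\big(w_{fp}(z)\big). \]

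Next, for the first identity, the idea is to combine \eqref{eq.Subordi.func} in Theorem~\ref{thm.Fixpoint}(iii) with the other half of \eqref{eq.Subordi.ii}, namely $F_{\mu_0}(z)=F_{\sigma_t}(w_1(z))$, and then with the explicit formula \eqref{eq.relation.FGSigmat} applied at $w=w_1(z)$. Concretely,
\[ w_{fp}(z)-z \;=\; w_1(z)-F_{\mu_0}(z) \;=\; w_1(z)-F_{\sigma_t}(w_1(z)) \;=\; t\,G_{\sigma_t}(w_1(z)). \]
Taking reciprocals in $F_{\sigma_t}(w_1(z))=F_{\mu_0}(z)$ gives $G_{\sigma_t}(w_1(z))=G_{\mu_0}(z)$, whence $w_{fp}(z)-z=t\,G_{\mu_0}(z)$, which is the first identity of \eqref{eq:lemGmu0}.

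Finally, for \eqref{estimee:wfp} I would use that for any probability measure $\mu$ and any $z\in\C^+$,
\[ |G_\mu(z)| \;\leq\; \int_\R \frac{d\mu(x)}{|z-x|} \;\leq\; \frac{1}{\Im(z)}. \]
Since $z\in\C_{2\sqrt{t}}$, i.e.\ $\Im(z)>2\sqrt{t}$, applying this to $\mu_0$ yields $|G_{\mu_0}(z)|<1/(2\sqrt{t})$, and then the first identity of \eqref{eq:lemGmu0} gives
\[ |w_{fp}(z)-z| \;=\; t\,|G_{\mu_0}(z)| \;<\; \frac{t}{2\sqrt{t}} \;=\; \frac{\sqrt{t}}{2} \;\leq\; \sqrt{t}. \]
There is no real obstacle: the whole statement is an algebraic unfolding of Theorem~\ref{thm.Fixpoint} and \eqref{eq.relation.FGSigmat}, the only point requiring care being to keep track of which identity from \eqref{eq.Subordi.ii} is used to eliminate $F_{\mu_0}$ versus $F_{\sigma_t}$.
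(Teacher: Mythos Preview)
Your proof is correct and follows essentially the same route as the paper: both use \eqref{eq.Subordi.func}, \eqref{eq.Subordi.ii}, and \eqref{eq.relation.FGSigmat} to write $w_{fp}(z)-z=t\,G_{\sigma_t}(w_1(z))$, and then take reciprocals in \eqref{eq.Subordi.ii} to identify this with $t\,G_{\mu_0}(z)=t\,G_{\mu_t}(w_{fp}(z))$. For the bound, you apply $|G_{\mu_0}(z)|\le 1/\Im(z)<1/(2\sqrt t)$ directly, whereas the paper bounds $|G_{\mu_t}(w_{fp}(z))|\le 1/\Im(w_{fp}(z))$ and invokes Theorem~\ref{thm.Fixpoint}(i); your version is in fact slightly sharper (yielding $\sqrt t/2$), but the argument is the same.
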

Moreover, for any $\gamma >0,$ if we denote by $\mathcal C_\gamma$ the centered Cauchy distribution with density:
\[f_\gamma(x):= \frac{\gamma}{\pi(x^2+\gamma^2)},\]
one can check that, for any probability measure $\mu$ on $\R,$ the density $f_{\mu * \mathcal C_\gamma}$ of the classical convolution of $\mu$
by  $\mathcal C_\gamma$ is given, for $x \in \R,$ by
\begin{equation}\label{eq:lien_f_G} f_{\mu * \mathcal C_\gamma}(x)= - \frac{1}{\pi}\Im G_{\mu}(x+ i \gamma). \end{equation}
Using the expression of $G_{\mu_0}$ given by Lemma \ref{lem:Gmu0} with $\gamma>2\sqrt{t}$, we get that for any $x \in \R,$
\begin{equation}\label{eq:densiteconvolee}
f_{\mu_0 * \mathcal C_{\gamma}}(x)=  \frac{1}{\pi t} \left[ \gamma  - \Im w_{fp}(x+ i \gamma)\right].
\end{equation}
From this, we can recover the density $p_0$ of $\mu_0$ by a classical deconvolution of \eqref{eq:densiteconvolee} by $f_{\gamma}$. The subordination function $w_{fp}$ in \eqref{eq:densiteconvolee} is estimated using the second equality of Lemma \ref{lem:Gmu0}. In parallel with our work, Tarrago \cite{tarrago} has used the formula \eqref{eq:densiteconvolee} to perform spectral deconvolution in a more general setting (including the multiplicative free convolution), but neither the approximation of $w_{fp}$ by its estimator $\widehat{w}^n_{fp}$ defined Theorem-Definition \ref{th:defestimator}  below nor the (classical) deconvolution of the Cauchy distribution are treated, which are key difficulties encountered in our paper. Tarrago uses a different approach based on concentration inequalities when we use fluctuations in view of the work of F\'evrier and Dallaporta \cite{Fevrier1}. To prove the rates announced in the introduction, we need to establish very precise estimates of the error terms (see Section~\ref{section:MISE}).

\begin{proof}[Proof of Lemma \ref{lem:Gmu0}]
Now, from (\ref{eq.Subordi.func}),  \eqref{eq.Subordi.ii} and \eqref{eq.relation.FGSigmat}, we write for $z\in \C_{2\sqrt{t}}$,
\begin{equation*}
w_{fp}(z) = z + w_{1}(z) - F_{\mu_0}(z) = z + w_{1}(z) - F_{\sigma_t}\big( w_{1}(z) \big) =  z + t.  G_{\sigma_t} \big( w_{1}(z) \big).
\end{equation*}
So, we obtain
\begin{equation*} G_{\sigma_t}\big( w_{1}(z) \big) = \dfrac{1}{t} \big( w_{fp}(z) - z \big).
\end{equation*}Using again \eqref{eq.Subordi.ii}, we obtain both equalities of \eqref{eq:lemGmu0}. From there
	\begin{equation*}
	\left| w_{fp}(z) - z \right| = t.\left| G_{\sigma_t}(w_{1}(z))\right| = t.\left| G_{\mu_{t}}(w_{fp}(z))  \right|  \leq \dfrac{t}{| \Im(w_{fp}(z)) |}  \leq  \sqrt{ t},
	\end{equation*}using Theorem \ref{thm.Fixpoint} (i).
\end{proof}

\paragraph{Estimator of $p_0$}

We do not observe directly the measure $\mu_t$. The observation is the matrix $X^n(t)$ at time $t>0$ for a given $n$. From this observation, we can construct the empirical spectral measure as defined in \eqref{def:empiricalmeasure}. Then, for $z \in \C^{+}$, a natural estimator of $G_{\mu_t}(z)$ is obtained as follows:
\begin{equation}\label{def:Ghat}
\widehat{G}_{\mu^{n}_t}(z):=\int_{\R} \frac{d\mu^n_t(\lambda)}{z-\lambda}=\frac{1}{n}\sum_{j=1}^n \frac{1}{z-\lambda^{n}_j(t)}=\frac{1}{n}\mbox{tr}\Big(\big(zI_n-X_n(t) \big)^{-1}\Big).
\end{equation}

\begin{thdef}\label{th:defestimator}
There exists a unique fixed-point to the following functional equation in $w(z)$:
\begin{equation} \label{eq:fixed-point.empirical}
\dfrac{1}{t} (w(z)-z)= \widehat G_{\mu^{n}_t}(w(z)), \quad \mbox{ for } z \in \C_{ 2\sqrt{t} }
\end{equation}
This fixed-point is denoted by $\widehat{w}^n_{fp}(z)$. We have $\Im(\widehat{w}^{n}_{fp}(z))>\Im(z)/2$ and $\left| \widehat{w}^{n}_{fp}(z) - z \right| \leq \sqrt{t}$.
\end{thdef}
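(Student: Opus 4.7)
The plan is to cast \eqref{eq:fixed-point.empirical} as an ordinary contraction-mapping problem. Set
$$\Phi_z(w):=z+t\,\widehat G_{\mu^n_t}(w)=z+\frac{t}{n}\sum_{j=1}^n\frac{1}{w-\lambda^n_j(t)},$$
so that \eqref{eq:fixed-point.empirical} reads $w=\Phi_z(w)$. Since $\widehat G_{\mu^n_t}$ is the Cauchy transform of a finitely supported probability measure, one has the elementary bounds
$$|\widehat G_{\mu^n_t}(w)|\leq \frac{1}{\Im(w)},\qquad \big|\Im\widehat G_{\mu^n_t}(w)\big|=\frac{1}{n}\sum_{j=1}^n\frac{\Im(w)}{|w-\lambda^n_j(t)|^2}\leq \frac{1}{\Im(w)}\quad (w\in\C^+),$$
together with the telescoping identity
$$\Phi_z(w_1)-\Phi_z(w_2)=-\frac{t}{n}\sum_{j=1}^n\frac{w_1-w_2}{(w_1-\lambda^n_j(t))(w_2-\lambda^n_j(t))}.$$
I would apply the Banach fixed-point theorem on the closed (hence complete) set $D_z:=\{w\in\C:\Im(w)\geq\Im(z)/2\}$, the values landing in $\C^+$ automatically.

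First I would check stability. For $w\in D_z$ the imaginary-part estimate yields
$$\Im\Phi_z(w)\geq\Im(z)-\frac{t}{\Im(w)}\geq\Im(z)-\frac{2t}{\Im(z)}\geq\frac{\Im(z)}{2},$$
the last inequality being equivalent to $\Im(z)^2\geq 4t$, which holds because $z\in\C_{2\sqrt{t}}$. Since $\Im(z)>2\sqrt{t}$ is strict, the chain is actually strict, so $\Phi_z(D_z)\subset\{w:\Im(w)>\Im(z)/2\}$, which forces any fixed point to satisfy $\Im(\widehat w^n_{fp}(z))>\Im(z)/2$. For the contraction, applying Cauchy--Schwarz and the bound $\frac{1}{n}\sum_j|w_i-\lambda^n_j(t)|^{-2}\leq\Im(w_i)^{-2}$ in the telescoping identity gives
$$|\Phi_z(w_1)-\Phi_z(w_2)|\leq\frac{t}{\Im(w_1)\Im(w_2)}|w_1-w_2|\leq\frac{4t}{\Im(z)^2}|w_1-w_2|,$$
so $\Phi_z$ is a uniform contraction on $D_z$ with constant $4t/\Im(z)^2<1$, again thanks to $z\in\C_{2\sqrt{t}}$. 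The Banach fixed-point theorem then yields existence and uniqueness of $\widehat w^n_{fp}(z)\in D_z$ solving \eqref{eq:fixed-point.empirical}.

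The size estimate follows directly from the fixed-point relation:
$$|\widehat w^n_{fp}(z)-z|=t\,|\widehat G_{\mu^n_t}(\widehat w^n_{fp}(z))|\leq \frac{t}{\Im(\widehat w^n_{fp}(z))}\leq \frac{2t}{\Im(z)}\leq\sqrt{t},$$
using $\Im(z)\geq 2\sqrt{t}$ at the last step. The argument is essentially routine; the only real subtlety is the quantitative matching between the stability requirement $\Im(z)^2\geq 4t$ and the contraction requirement $4t/\Im(z)^2<1$, which is exactly why the domain of $\widehat w^n_{fp}$ has to be $\C_{2\sqrt{t}}$, mirroring the restriction already appearing in Theorem \ref{thm.Fixpoint} for the deterministic subordination function $w_{fp}$.
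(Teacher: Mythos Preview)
Your proof is correct. The approach differs from the paper's: you use the Banach contraction mapping theorem on the closed half-plane $D_z=\{\Im(w)\geq\Im(z)/2\}$, whereas the paper invokes the Denjoy--Wolff theorem on $\C_{\frac{1}{2}\Im(z)}$ after checking that $\widehat L_z$ is analytic, maps this half-plane strictly into itself, and is not an automorphism (because $|\widehat L_z(w)-z|\leq\sqrt{t}$). Your route is more elementary and self-contained here, since the empirical map $\widehat L_z(w)=z+t\,\widehat G_{\mu^n_t}(w)$ is simple enough that the Lipschitz constant $4t/\Im(z)^2<1$ can be read off directly; the paper presumably chose Denjoy--Wolff for consistency with the proof of Theorem~\ref{thm.Fixpoint}, where the iteration $L_z$ involves the composition $h_{\sigma_t}\circ(\widetilde h_{\mu_t}-z)$ and a metric contraction is not immediately visible. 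One cosmetic remark: the Cauchy--Schwarz step is unnecessary, since $|(w_1-\lambda)(w_2-\lambda)|\geq\Im(w_1)\Im(w_2)$ already gives the termwise bound.
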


The theorem is proved at the end of this section. We shall prove in Section \ref{section:estimationw} that $\widehat{w}^n_{fp}(z)$ is a convergent estimator of $w_{fp}(z)$ and establish a fluctuation result associated with this convergence. This is the result announced in Proposition \ref{propsition.consistency.estimate.w_fp}. Let us now explain how the estimator of $p_0$ can be obtained from $\widehat{w}^n_{fp}(z)$.\\

Recall that the Fourier transform of the Cauchy distribution $\mathcal{C}_{\alpha}$ with $\alpha>0$ is $f^{\star}_{\alpha} (\xi) = e^{-\alpha \left|\xi\right|}$ for $\xi \in \R$. Performing the deconvolution from \eqref{eq:densiteconvolee}, the Fourier transform of $p_0$ is the division of the Fourier transform of the right-hand side of \eqref{eq:densiteconvolee} by $f^{\star}_{\gamma} (\xi)$ with $\gamma>2\sqrt{t}$. It is now classical to define our ultimate estimator for the density function $p_0$ from its Fourier transform:

\begin{definition}\label{def:est}
Let us consider a bandwidth $h>0$ and a regularizing kernel $K$. We assume that the kernel $K$ is such that its Fourier transform $K^\star$ is bounded by a positive constant $C_K<+\infty$ and has a compact support, say $[-1,1]$. We define the estimator $\widehat{p}_{0,h}$ of $p_0$  by its Fourier transform:
\begin{equation} \label{Fourier.f_mu0.hat}
\widehat{p}_{0,h}^{\star}(\xi) = e^{\gamma  |\xi| }. K_{h}^{\star}(\xi). \dfrac{1}{\pi t} \left[ \gamma -  \Im \hspace{0.05cm} \widehat{w}^n_{fp}(\cdot+ i \ \gamma )^{\star}(\xi) \right],
\end{equation}
where we have defined $K_h(\cdot)=\frac{1}{h}K\big(\frac{\cdot}{h}\big)$.
\end{definition}

Note that the assumption on $K$ ensures the finiteness of the estimator. These assumptions are for instance satisfied for $K(x)=\sinc(x)=\sin(x)/(\pi x)$ whose Fourier transform is $K^{\star}(\xi)=\mathds{1}_{[-1, 1]}(\xi)$, and in this case $C_K=1$. 
\subsection{Proof of Theorem \ref{thm.Fixpoint}}\label{sec:proofTheoremFixPoint}
The constants of Theorem \ref{thm.Fixpoint} are better than the ones of Arizmendi et al. \cite{Tarrago1} who work in  full generality. We sketch here the main steps of the proof in our context, using the explicit formula for the semi-circular distribution.\\

\noindent In the whole proof, we consider $z \in \mathbb{C}_{2\sqrt{t}}$.\\

\noindent \textbf{Step 1:} We first prove that the function ${L}_z(w) = h_{\sigma_t}\big( \widetilde{h}_{\mu_t}(w) - z \big) + z$ is well-defined and analytic on $\mathbb{C}_{\frac{1}{2} \Im(z)}$. Since $h_{\sigma_t}$ is defined on $\mathbb{C}^{+}$, we need to check that $\widetilde{h}_{\mu_t}(w) - z \in \mathbb{C}^{+}$ for $w \in \mathbb{C}_{\frac{1}{2} \Im(z)}$. This is satisfied since for such $w$,
		\begin{align}\label{etape2}
		\Im\big( \widetilde{h}_{\mu_t}(w) - z \big) = \Im \big( w + F_{\mu_t}(w) - z \big) \geq 2 \Im(w) - \Im(z) > 0,
		\end{align}
		where we have used $\Im F_{\mu_t}(w) \geq \Im(w)$ for the first inequality.
Indeed, if $w=w_1+iw_2$, we have
\[(F_{\mu_t}(w))^{-1}=G_{\mu_t}(w)=\int\frac{d\mu_t(x)}{w_1+iw_2-x}=\int \frac{(w_1-x)d\mu_t(x)}{(w_1-x)^2+w_2^2}-iw_2\int \frac{d\mu_t(x)}{(w_1-x)^2+w_2^2}\] and
\begin{align}\label{mino}
\Im(F_{\mu_t}(w))&=w_2\times\frac{\int \frac{d\mu_t(x)}{(w_1-x)^2+w_2^2}}{\left(\int \frac{(w_1-x)d\mu_t(x)}{(w_1-x)^2+w_2^2}\right)^2+w_2^2\left(\int \frac{d\mu_t(x)}{(w_1-x)^2+w_2^2}\right)^2}\nonumber\\
&\geq w_2\times\frac{\int \frac{d\mu_t(x)}{(w_1-x)^2+w_2^2}}{\int \frac{(w_1-x)^2d\mu_t(x)}{\left((w_1-x)^2+w_2^2\right)^2}+w_2^2\int \frac{d\mu_t(x)}{\left((w_1-x)^2+w_2^2\right)^2}}=w_2
\end{align}

\noindent \textbf{Step 2:} We show that $L_z(\C_{\frac{1}{2} \Im(z)})\subset \overline{\C_{\frac{1}{2} \Im(z)}}$ and that $L_z$ is not a conformal automorphism.\\

First, let us show that ${L}_z\left( \mathbb{C}_{\frac{1}{2} \Im(z)} \right) \subset \overline{\mathbb{C}_{\frac{1}{2} \Im(z)}}$. Let $w \in \mathbb{C}_{\frac{1}{2} \Im(z)}$, we have:
		\begin{align}
		\Im \left({L}_z(w)\right) &=  \Im \left[t.G_{\sigma_t} \big( \widetilde{h}_{\mu_t}(w) - z \big) + z \right] \nonumber\\
		&= \Im \left( \dfrac{\widetilde{h}_{\mu_t}(w) - z - \sqrt{\left(\widetilde{h}_{\mu_t}(w) - z\right)^2 - 4t} }{2} + z \right).\label{etape1}
		\end{align}To lower bound the right hand side, note that for all $v\in \C^+$, one can check that:
\begin{align*}
\Im\big(\sqrt{v^2-4t}\big)\leq  &\sqrt{\Im^2(v)+4t}.
\end{align*}Therefore, we have:
		\[ \Im \left(  \sqrt{\big( \widetilde{h}_{\mu_t}(w) - z \big)^2 - 4t} \right) \leq \sqrt{\big[\Im \big( \widetilde{h}_{\mu_t}(w) - z \big) \big]^2 + 4t} .\]
		Hence, \eqref{etape1} yields:
		\begin{align*}
		\Im \big( L_z(w)\big) \geq \dfrac{1}{2} \left[ \Im\big( \widetilde{h}_{\mu_t}(w) - z \big) - \sqrt{\big[\Im \big( \widetilde{h}_{\mu_t}(w) - z \big) \big]^2 + 4t} \right] + \Im(z) .
		\end{align*}The function $g(s) = s - \sqrt{s^2 + 4t}$ is non-decreasing on $\R_+$ and for all $s > 0$, $		g(s) \geq  -2\sqrt{t}$.
		This implies that
		\begin{align}\label{eq:sqrtt}
		\Im\big( {L}_z(w) \big) \geq  \Im(z) -\sqrt{t}> \frac{1}{2} \Im(z),
		\end{align}since $z\in \C_{2\sqrt{t}}$. This guarantees that $L_z(w) \in \mathbb{C}_{\frac{1}{2} \Im(z)}$.		\\

Let us now prove that ${L}_z$ is not an automorphism of $\mathbb{C}_{\frac{1}{2} \Im(z)}$. Consider
		\begin{align*}
		\left| {L}_z(w) - z \right| =  & \left| F_{\sigma_t}\big( \widetilde{h}_{\mu_t}(w) - z \big) - \big( \widetilde{h}_{\mu_t}(w) - z \big) \right| = \left|t G_{\sigma_t}\big(\widetilde{h}_{\mu_t}(w) - z\big)\right|.
		\end{align*}
For $v\in \C^+$, if $|v|>3\sqrt{t}$, since the support of  $\sigma_t$ is $[-2\sqrt{t},2\sqrt{t}]$,
\[
 \left|t G_{\sigma_t}(v)\right| =  \left| \int_{-2\sqrt{t}}^{2\sqrt{t}} \frac{t}{v-x}\dd\sigma_t(x) \right|\leq \sqrt{t}.
 \]
If $|v|\leq 3\sqrt{t}$,
\[\left|t G_{\sigma_t}(v)\right| =\left| \frac{v-\sqrt{v^2-4t}}{2} \right|\leq \frac{2|v|+2\sqrt{t}}{2}\leq 4\sqrt{t}. \]
Hence, for all $w\in \mathbb{C}_{\frac{1}{2} \Im(z)}$,
\begin{equation}\label{eq:ball}
\left| {L}_z(w) - z \right|\leq 4\sqrt{t}.
\end{equation}
This implies that ${L}_z\left( \mathbb{C}_{\frac{1}{2} \Im(z)} \right)$ is included in the ball centered at $z$ with radius $4\sqrt{t}$. As a result, ${L}_z$ is not surjective and hence is not an automorphism of $\mathbb{C}_{\frac{1}{2} \Im(z)}$.\\

\noindent \textbf{Step 3:} Existence and uniqueness of $w_{fp}$, which is a fixed point of $L_z$.\\

By Steps 1 and 2, $L_z$ satisfies the assumptions of Denjoy-Wolff's fixed-point theorem (see e.g. \cite{belinschibercovici,Tarrago1}). The theorem says that for all $w\in \C_{\frac{1}{2} \Im(z)}$ the iterated sequence $L_z^{\circ m}(w)=L_z\circ L_z^{\circ (m-1)}(w)$ converges to the unique Denjoy-Wolff point of $L_z$ which we define as $w_{fp}(z)$. The Denjoy-Wolff point is either a fixed-point of $L_z$ or a point of the boundary of the domain.
Let us check that $w_{fp}$ is a fixed point of $L_z$. For any $z \in \C_{2\sqrt t},$ there exists $\gamma >2$ such that $z \in \C_{\gamma\sqrt t}$ and from \eqref{eq:sqrtt},
$L_z(\C_{\frac{1}{2} \Im(z)}) \subset \C_{(1-\frac{1}{\gamma}) \Im(z)}.$ Moreover, from \eqref{eq:ball}, $L_z(\C_{\frac{1}{2} \Im(z)}) \subset B(z, 4\sqrt t).$ Therefore,
$w_{fp}(z) \in \overline{\C_{(1-\frac{1}{\gamma}) \Im(z)} \cap  B(z, 4\sqrt t) } \subset \C_{\frac{1}{2} \Im(z)},$ so that it is {necessarily} a fixed point.\\

We now define
\[w_1 (z) := F_{\mu_t}(w_{fp}(z) ) + w_{fp}(z) -z.\]

One can check that
\begin{eqnarray*}
 F_{\sigma_t}(w_1(z)) &= &w_1(z) - h_{\sigma_t}(w_1(z)) \\
 & = & F_{\mu_t}(w_{fp}(z) ) + w_{fp}(z) -z -  h_{\sigma_t}(F_{\mu_t}(w_{fp}(z) ) + w_{fp}(z) -z)\\
 & = & \tilde h_{\mu_t}(w_{fp}(z) ) - z - h_{\sigma_t}(\tilde h_{\mu_t}(w_{fp}(z) ) - z ) \\
 & = & \tilde h_{\mu_t}(w_{fp}(z) ) - w_{fp}(z) = F_{\mu_t}(w_{fp}(z)).
\end{eqnarray*}
One can therefore rewrite
\begin{equation*}\label{eq:w1}
 w_1(z) = F_{\sigma_t}(w_1(z)) +  w_{fp}(z)-z.
\end{equation*}
From \eqref{eq:ball} and the fact that $w_{fp}(z)$ is a fixed point of $L_z,$ one easily gets that
\[ \lim_{y\rightarrow +\infty} \frac{w_{fp}(iy)}{iy}=1,\]
which implies that
\[ \lim_{y\rightarrow +\infty} \frac{F_{\mu_t}(w_{fp}(iy))}{iy}=1,\quad \mbox{ and }\quad \lim_{y\rightarrow +\infty} \frac{w_1(iy)}{iy}=1.\]

Now we connect $F_{\mu_0}$ to the previous quantities.
For $z$ large enough, all the functions we consider are invertible and we have
\[ F_{\mu_t}(w_{fp}(z)) +  w_{fp}(z) = z+ w_1 (z) = z + F_{\sigma_t}^{<-1>}(F_{\mu_t}(w_{fp}(z))). \]
On the other hand, for $z$ large enough, using Theorem-definition \ref{def:subordinationfunction} for $\mu_1= \sigma_t$ and  $\mu_2= \mu_0,$ we get
\[ F_{\mu_t}(w_{fp}(z)) +  w_{fp}(z) = \alpha_1(w_{fp}(z)) + \alpha_2(w_{fp}(z))  = F_{\sigma_t}^{<-1>}(F_{\mu_t}(w_{fp}(z))) + F_{\mu_0}^{<-1>}(F_{\mu_t}(w_{fp}(z))).\]
Comparing the two equalities gives
\[F_{\mu_0}^{<-1>}(F_{\mu_t}(w_{fp}(z))) = z, \]
so that, for $z$ large enough,
\[ F_{\mu_t}(w_{fp}(z)) = F_{\mu_0}(z).\]
The two functions being analytic on $\C_{2 \sqrt t},$ the equality can be extended to any $z \in \C_{2 \sqrt t}.$

Finally, since
\[ w_1(z) = F_{\mu_t}(w_{fp}(z) ) + w_{fp}(z) -z=F_{\mu_0}(z) + w_{fp}(z) -z,\]
we have, using \eqref{mino} with $\mu_0$ instead of $\mu_t$,
\[\Im (w_1(z))=\Im(F_{\mu_0}(z)) + \Im(w_{fp}(z)) -\Im(z)\geq  \Im(w_{fp}(z))\geq \frac{1}{2} \Im(z).\]


This ends the proof of Theorem \ref{thm.Fixpoint}.
\subsection{Proof of Theorem-Definition \ref{th:defestimator}}The proof of this theorem follows the steps of the proof of Theorem \ref{thm.Fixpoint}. First, $\widehat{L}_z(w):=t \widehat{G}_{\mu^n_t}(w)+z$ is a well-defined and analytic function on $\C^+$.
Let us check that $\widehat{L}_z\big(\C_{\frac{1}{2}\Im(z)}\big)\subset \C_{\frac{1}{2}\Im(z)}$ for $z\in \C_{2\sqrt{t}}$.
For $w=u+iv \in \C_{\frac{1}{2}\Im(z)}$,
\begin{equation}
\Im\big(\widehat{G}_{\mu^n_t}(w)\big)=\frac{1}{n}\sum_{j=1}^n \Im\Big(\frac{u-\lambda_j^n(t) - i v}{(u-\lambda_j^n(t))^2+v^2}\Big)>-\frac{1}{v}=-\frac{1}{\Im(w)}.\label{eq:ImG}
\end{equation}
Thus,
\begin{align*}
\Im\big(\widehat{L}_z(w)\big)= & t \ \Im\big(\widehat{G}_{\mu^n_t}(w)\big)+\Im(z)>   -\frac{t}{\Im(w)}+\Im(z)
>  -\frac{2t}{\Im(z)}+\Im(z)
>  \frac{1}{2}\Im(z).
\end{align*}The second inequality comes from the choice of $w\in \C_{\frac{1}{2}\Im(z)}$, and the last inequality is a consequence of $\Im(z)>2\sqrt{t}$.\\
Moreover, $\widehat{L}_z$ is not an automorphism since:
\begin{align}\label{Kcircle}\left|\widehat{L}_z(w)-z\right|= & \left|t \widehat{G}_{\mu^n_t}(w)\right|= \left| \frac{1}{n}\sum_{j=1}^n \frac{t}{w-\lambda_j^n(t)}\right| \leq  \frac{t}{\Im(w)}\leq \sqrt{t}
\end{align}
since $\Im(w)> \frac{1}{2}\Im(z)>\sqrt{t}$.
We use again the Denjoy-Wolff fixed-point theorem. Because the inclusion of $\widehat{L}_z\big(\C_{\frac{1}{2}\Im(z)}\big)$ into $ \C_{\frac{1}{2}\Im(z)}$ is strict, the unique Denjoy-Wolff point of $\widehat{L}_z$ is necessarily a fixed point that we denote $\widehat{w}_{fp}(z)$. From the construction, $\Im(\widehat{w}_{fp}(z))>\Im(z)/2$. Finally, the last announced estimate is a straightforward consequence of \eqref{Kcircle}.
\section{Study of the subordination function}\label{section:estimationw}
This section is devoted to the proof of Proposition \ref{propsition.consistency.estimate.w_fp}. We show that $\widehat{w}_{fp}^n(z)$ converges uniformly to $w_{fp}(z)$ on $\C_\gamma$ with $\gamma>2\sqrt{t}$. Next, we establish that the fluctuations are of order $1/\sqrt{n}$.
\subsection{Proof of (i) and (ii) of Proposition \ref{propsition.consistency.estimate.w_fp}}
We first state a useful lemma.
\begin{lem}\label{lem.Lipschitz.CauchyTransform}
For any probability measure $\mu$ on $\R$ and $\alpha>0$, the Cauchy transform $G_{\mu}$ is Lipschitz on $\C_{\alpha}$ with Lipschitz constant $ \dfrac{1}{\alpha^2}$, and one has for any $z\in \C_\alpha$, $\left|G_{\mu}(z)\right|\leq \dfrac{1}{\alpha}$.
\end{lem}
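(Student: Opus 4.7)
Both statements follow directly from the elementary inequality $|z-x| \geq |\Im(z)|$ valid for any real $x$ and any $z\in\C\setminus\R$. My plan is to unwind the definition of $G_\mu$ in each case and pass the modulus under the integral sign.

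For the pointwise bound, I would write, for $z\in\C_\alpha$,
\begin{equation*}
|G_\mu(z)| = \left|\int_\R \frac{d\mu(x)}{z-x}\right| \leq \int_\R \frac{d\mu(x)}{|z-x|} \leq \int_\R \frac{d\mu(x)}{\Im(z)} = \frac{1}{\Im(z)} \leq \frac{1}{\alpha},
\end{equation*}
where the penultimate step uses $|z-x|^2 = (\Re(z)-x)^2 + \Im(z)^2 \geq \Im(z)^2$ and $\mu$ is a probability measure.

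For the Lipschitz estimate, the key algebraic identity is
\begin{equation*}
\frac{1}{z_1-x} - \frac{1}{z_2-x} = \frac{z_2-z_1}{(z_1-x)(z_2-x)},
\end{equation*}
valid for $z_1,z_2\in\C_\alpha$ and $x\in\R$. Integrating against $\mu$ and bounding the modulus gives
\begin{equation*}
|G_\mu(z_1)-G_\mu(z_2)| \leq |z_1-z_2| \int_\R \frac{d\mu(x)}{|z_1-x|\,|z_2-x|} \leq |z_1-z_2| \cdot \frac{1}{\Im(z_1)\,\Im(z_2)} \leq \frac{|z_1-z_2|}{\alpha^2},
\end{equation*}
which is the desired Lipschitz constant.

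There is no real obstacle here: the proof is a one-line consequence in each part of the inequality $|z-x|\geq \Im(z)>\alpha$ for $z\in\C_\alpha$, combined with the standard partial-fraction identity for the difference $\tfrac{1}{z_1-x}-\tfrac{1}{z_2-x}$. The only care needed is to state explicitly that the bound $|z-x|\geq \Im(z)$ holds uniformly in $x\in\R$, which justifies the interchange of modulus and integral.
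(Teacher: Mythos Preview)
Your proof is correct and follows essentially the same approach as the paper's own proof: both parts use the elementary bound $|z-x|\geq \Im(z)>\alpha$ together with the identity $\frac{1}{z_1-x}-\frac{1}{z_2-x}=\frac{z_2-z_1}{(z_1-x)(z_2-x)}$ under the integral. The only cosmetic difference is the order of presentation (you prove the pointwise bound first, the paper proves the Lipschitz estimate first).
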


\begin{proof}
	For $z, z' \in \C_{\alpha}$,
	\begin{align*}
	\left| G_{\mu}(z) - G_{\mu}(z') \right| =  & \left| \int_{\R} \dfrac{d\mu(x)}{z-x} - \int_{\R} \dfrac{d\mu(y)}{z'-y} \right| \leq \left|z-z'\right| \ \int_{\R} \dfrac{d\mu(x)}{\left|(z-x)(z'-x)\right|} \\
	&\leq \frac{|z-z'|}{\Im(z)  \Im(z')} \leq \dfrac{|z-z'|}{ \alpha^2 }.
	\end{align*}
	This implies the Lipschitz property of $G_{\mu_t}$.
	Also,
	\begin{align*}
	\left| G_{\mu}(z) \right| = \left| \int_{\R} \dfrac{d\mu(x)}{z-x} \right| \leq \dfrac{1}{\Im(z)} \leq \dfrac{1 }{\alpha} .
	\end{align*}This finishes the proof.
\end{proof}

We are now ready to prove the points (i) and (ii) of Proposition \ref{propsition.consistency.estimate.w_fp}.

\begin{proof}[Proof of Proposition \ref{propsition.consistency.estimate.w_fp}(i-ii)]Consider $z\in \C_{\gamma}$ with $\gamma>2\sqrt{t}$. Using the equations \eqref{eq:lemGmu0} and \eqref{eq:fixed-point.empirical} characterizing $w_{fp}(z)$ and $\widehat w_{fp}^{n}(z)$, we have
	\begin{align}
		\left| \widehat w_{fp}^{n}(z) -  w_{fp}(z) \right| &= t \left| \widehat G_{\mu^{n}_t} ( \widehat w^{n}_{fp}(z)) -  G_{\mu_t} ( w_{fp}(z)) \right| \nonumber  \\
		&\leq    t \left| \widehat G_{\mu^{n}_t} ( \widehat w^{n}_{fp}(z)) -  \widehat{G}_{\mu^n_t} ( w_{fp}(z)) \right| +  t  \left| \widehat{G}_{\mu^n_t} ( w_{fp}(z)) -  G_{\mu_t} ( w_{fp}(z)) \right| .
	\end{align}
By Theorem~\ref{thm.Fixpoint}, $\Im(w_{fp}(z)) \geq \dfrac{1}{2}\Im(z)$ and since $\widehat{G}_{\mu^n_{t}}$ is a Lipschitz function on $\mathbb{C}_{\frac{1}{2} \Im(z)}$ with Lipschitz constant $ \dfrac{4}{\Im^2(z)}\leq \frac{4}{\gamma^2}$, by Lemma \ref{lem.Lipschitz.CauchyTransform}, we have an upper bound for the first term
	\begin{align*}
	\left |\widehat{G}_{\mu^n_t} ( \widehat w^{n}_{fp}(z)) -  \widehat{G}_{\mu^{n}_t} ( w_{f_p}(z)) \right| \leq \dfrac{4}{ \gamma^2} \times \left| \widehat{w}^{n}_{fp}(z) - w_{fp}(z) \right|.
	\end{align*}
	Thus,
	\begin{align*}
	\left| \widehat{w}^{n}_{fp}(z) - w_{fp}(z) \right| \leq \frac{4t}{\gamma^2} \left| \widehat{w}^{n}_{fp}(z) - w_{fp}(z) \right| + t \left| \widehat{G}_{\mu^{n}_t} \big(w_{fp}(z)\big) - G_{\mu_t}\big(w_{fp}(z)\big) \right| ,
	\end{align*}
implying that
	\begin{align}\label{proof.consistency.w^n_fp.upperbound.by.Cauchytransform}
	\left| \widehat{w}^{n}_{fp}(z) - w_{fp}(z) \right| & \leq \left( \dfrac{ t \gamma^2}{\gamma^2 - 4t} \right) \times \left| \widehat{G}_{\mu^{n}_t} \big(w_{fp}(z)\big) - G_{\mu_t}\big(w_{fp}(z)\big) \right|.
	\end{align}
By Proposition \ref{prop:LGN.mu.t}, since the function $x\mapsto \frac{1}{z-x}$ is continuous and bounded on $\R$ for any $z \in \C_{\sqrt{t}}$, $\widehat G_{\mu^n_t}(w_{fp}(z))= \int_\R \frac{1}{w_{fp}(z)-x}\mu^n_t(\dd x)$ converges almost surely to $G_{\mu_t}(w_{fp}(z))= \int_\R \frac{1}{w_{fp}(z)-x}\mu_t(\dd x)$. This concludes the proof of (i).\\

To prove the uniform convergence (ii), we will need Vitali's convergence theorem, see e.g. \cite[Lemma 2.14, p.37-38]{BaiSilver}: on  any bounded compact set of $\C_{2\sqrt{t}}$, the simple convergence is in fact a uniform convergence. Moreover, the functions $G_{\mu_t}(z)$ and $\widehat G_{\mu^n_t}(z)$ decay as $1/|z|$ when $|z|\rightarrow +\infty$, implying the uniform convergence of the right-hand side of \eqref{proof.consistency.w^n_fp.upperbound.by.Cauchytransform} on $\C_\gamma$, for $\gamma>2\sqrt{t}$ and of $\widehat{w}^n_{fp}(z)$ to $w_{fp}(z)$.
\end{proof}



\subsection{Fluctuations of the Cauchy transform of the empirical measure}\label{section:Fluctuations of Cauchy transform}

We now prove point (iii) of Proposition \ref{propsition.consistency.estimate.w_fp}. For this purpose, we first decompose:
\begin{align}
\lefteqn{\widehat{G}_{\mu^{n}_t}(z)-G_{\mu_t}(z)}\nonumber\\
= &  \widehat{G}_{\mu^{n}_t}(z) - \mathbb{E} \big[\widehat{G}_{\mu^{n}_t}(z) | X^n(0)\big] + \mathbb{E} \big[\widehat{G}_{\mu^{n}_t}(z)| X^n(0)\big] -  G_{\mu^{n}_{0} \boxplus \sigma_{t} }(z)+ G_{\mu^{n}_{0} \boxplus \sigma_{t} }(z) -G_{\mu_t}(z)\nonumber\\
=: & A_1^n(z)+A_2^n(z)+A_3^n(z).\label{decomp:G}
\end{align}
The first term is related to the variance of $\widehat{G}_{\mu^{n}_t}(z)$ (conditional on $X^n(0)$). The second term heuristically compares the evolution with the Hermitian Brownian motion to its limit. The third term deals with the fluctuations of the empirical initial condition.
A similar decomposition for the first two terms is done in Dallaporta and F\'evrier \cite{Fevrier1} (without the problem of the random initial condition) and we will adapt their results.
We will show that the fluctuations of the first two terms are of order $1/n$, and this is treated in Propositions \ref{proposition.D&F19.1st.term} and \ref{prop.b_n(z).bound} below. The third term, which is associated to a classical central limit theorem, is of order $1/\sqrt{n}$. This is proved in Proposition \ref{prop.fluctuG2}.\\

For the term $A_1^n(z)$, the result is a direct consequence of Proposition 3 in \cite{Fevrier1} and we refer to the detailed computation in \cite{datthesis}.
\begin{proposition}\label{proposition.D&F19.1st.term}
	For $z \in \C^+$ and $n \in \N$,
\begin{equation*}
		\Var\big(n A^n_1(z)|X^n(0)\big) = \Var\big(n \widehat{G}_{\mu^{n}_t}(z) |X^n(0)\big) \leq \dfrac{10 t}{\Im^4(z)}.
	\end{equation*}
\end{proposition}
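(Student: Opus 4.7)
The plan is to reduce this to a known variance estimate for resolvent traces of matrices perturbed by a Hermitian Brownian motion. The key observation is that the conditioning on $X^n(0)$ converts the problem to one with deterministic initial data: since $X^n(0)$ and $H^n(t)$ are independent, conditionally on $X^n(0)$ the matrix $X^n(t) = X^n(0) + H^n(t)$ is exactly a Hermitian Brownian motion at time $t$ translated by a fixed Hermitian matrix. This is the setup treated in Proposition~3 of \cite{Fevrier1}, applied to the test function $\lambda \mapsto 1/(z-\lambda)$.

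The underlying mechanism is the Gaussian Poincaré inequality. Writing $n\widehat G_{\mu^n_t}(z) = \mathrm{tr}(R(z))$ with $R(z) = (zI_n - X^n(t))^{-1}$, and viewing $\mathrm{tr}(R(z))$ as a smooth function of the $n^2$ independent real Gaussian entries $\{B_{k,l}, \tilde B_{k,l}\}$ of $H^n(t)$ (each of variance $t$, coming with the $1/\sqrt{2n}$ or $1/\sqrt{n}$ normalization of \eqref{eq:Hermit.BrowMotion}), one uses the matrix-derivative formula $\partial_{A_{ij}} \mathrm{tr}(R) = -(R^2)_{ji}$ to compute that the sum of squared partial derivatives is controlled by $(\text{const})\cdot \|R^2\|_{HS}^2/n$. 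Combining with the standard operator-norm bound $\|R(z)\|_{\mathrm{op}} \leq 1/\Im(z)$, which gives $\|R^2\|_{HS}^2 \leq n \|R\|_{\mathrm{op}}^4 \leq n/\Im^4(z)$, one obtains $\mathrm{Var}(\mathrm{tr}(R(z)) \mid X^n(0)) \leq C\, t/\Im^4(z)$ uniformly in $X^n(0)$.

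Concretely I would (i) apply the conditional Gaussian Poincaré inequality on the real and imaginary parts of the entries of $H^n(t)$; (ii) bound $|\partial_{B_{k,l}} \mathrm{tr}(R)|^2$ and $|\partial_{\tilde B_{k,l}} \mathrm{tr}(R)|^2$ by $(|(R^2)_{k,l}|^2 + |(R^2)_{l,k}|^2)/n$ for $k<l$ and similarly $|\partial_{B_{k,k}}\mathrm{tr}(R)|^2 = |(R^2)_{k,k}|^2/n$ on the diagonal; (iii) sum to recover $2\|R^2\|_{HS}^2/n$; (iv) bound by $2/\Im^4(z)$ and multiply by $t$. The worst constant that emerges depends on how loosely one majorizes the cross terms in $|(R^2)_{k,l} \pm (R^2)_{l,k}|^2$; the constant $10$ stated in the proposition is obtained by a direct but mechanical bookkeeping, which explains why \cite{Fevrier1} and \cite{datthesis} are cited for the detailed computation.

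The main (and essentially only) obstacle is this bookkeeping of constants, since one must handle both the diagonal and off-diagonal normalizations of $H^n(t)$ and account for the complex-valued nature of $\mathrm{tr}(R(z))$ (whose real and imaginary parts are each controlled by Gaussian Poincaré). No genuinely new analytical difficulty appears: once the conditioning reduces to the deterministic-initial-condition setting, the estimate is exactly of the form established in \cite{Fevrier1} for functionals of a Hermitian Brownian motion.
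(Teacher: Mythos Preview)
Your proposal is correct and follows essentially the same approach as the paper: the paper simply states that the result is a direct consequence of Proposition~3 in \cite{Fevrier1} (with detailed computations deferred to \cite{datthesis}), and the Gaussian Poincar\'e argument you outline---conditioning to reduce to deterministic initial data, differentiating $\mathrm{tr}(R(z))$ in the Gaussian entries of $H^n(t)$, and bounding via $\|R(z)\|_{\mathrm{op}}\le 1/\Im(z)$---is exactly the mechanism behind that cited proposition. Your identification of the constant bookkeeping as the only subtlety is also accurate.
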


\subsubsection{Fluctuations of $A^n_2(z)$}

We start with some additional notations. Let us denote the resolvent of $X^{n}(t)$ by
\begin{equation}\label{eq:Rnt}
	R_{n,t}(z) := \left( z I_{n} - X^{n}(t) \right)^{-1}.
\end{equation}
Then one can write
\begin{equation*}
	\widehat{G}_{\mu^{n}_{t}}(z) = \dfrac{1}{n} \  \mbox{Tr}\left(R_{n,t}(z)\right).
\end{equation*}
Then, the bias term is:
\begin{equation}
 n A^n_2(z)
 =  \mathbb{E}\left[  \Tr  \left(R_{n,t}(z)\right) \ |\ X^n(0)\right]  - n G_{\mu^{n}_{0} \boxplus \sigma_{t}}(z),
 \end{equation}
and it is given by an adaptation of \cite[Proposition 4]{Fevrier1} to the case of a random initial condition:
\begin{proposition}\label{prop.b_n(z).bound}
	For $z \in \C^+$ and $n \in \N$,
\begin{equation}\label{eq:but1}
		\left| n A^n_2(z) \right|  \leq \left( 1 + \frac{4t}{ \Im^2(z)}\right). \left(\dfrac{2t}{\Im^3(z)}  +  \dfrac{12 t^2}{\Im^5(z)} \right).
	\end{equation}
\end{proposition}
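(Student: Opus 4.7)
The plan is to reduce the bound to the deterministic initial condition case treated in Proposition~4 of Dallaporta--F\'evrier \cite{Fevrier1} via conditioning on $X^n(0)$. Since $H^n(t)$ is independent of $X^n(0)$, the conditional law of $X^n(t) = X^n(0) + H^n(t)$ given $X^n(0)$ is exactly that of a Hermitian Brownian motion deformed by the (now deterministic) diagonal matrix $X^n(0)$. Thus, I would apply \cite[Proposition 4]{Fevrier1} pointwise in the realisation of $X^n(0)$; since the right-hand side of \eqref{eq:but1} depends only on $t$ and $\Im(z)$, not on the deformation, the conditional bound immediately yields the claim.

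It remains to sketch the deterministic argument on which \cite{Fevrier1} relies. Denote $\Phi(z) := \mathbb{E}[R_{n,t}(z) \mid X^n(0)]$ and $g_n(z) := \frac{1}{n}\Tr \Phi(z)$. Gaussian integration by parts applied to the entries of $H^n(t)$, combined with the resolvent identity $(zI_n - X^n(0))R_{n,t}(z) = I_n + H^n(t)R_{n,t}(z)$, yields a Schwinger--Dyson equation of the form
\begin{equation*}
\bigl(z I_n - X^n(0) - t g_n(z) I_n\bigr) \Phi(z) = I_n + t\, \mathcal{E}_n(z),
\end{equation*}
where the entries of $\mathcal{E}_n(z)$ are controlled by the conditional covariances of $\frac{1}{n}\Tr R_{n,t}(z)$ with matrix entries of $R_{n,t}(z)$. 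Taking the normalised trace produces a scalar fixed-point equation $g_n(z) = \frac{1}{n}\sum_i (z - \lambda_i^n(0) - t g_n(z))^{-1} + \rho_n(z)$, where $|\rho_n(z)|$ decomposes into a direct variance contribution yielding $2t/(n\Im^3(z))$ (via Proposition \ref{proposition.D&F19.1st.term}) and a higher-order correction yielding $12t^2/(n\Im^5(z))$.

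Subtracting this from the exact fixed-point equation satisfied by $m_n(z) := G_{\mu_0^n \boxplus \sigma_t}(z)$ (which follows from Lemma \ref{lem:Gmu0} applied to the empirical measure $\mu_0^n$) gives
\begin{equation*}
(g_n(z) - m_n(z))\bigl(1 - t S_n(z)\bigr) = \rho_n(z),
\end{equation*}
with $S_n(z) := \frac{1}{n}\sum_i (z - \lambda_i^n(0) - t g_n(z))^{-1}(z - \lambda_i^n(0) - t m_n(z))^{-1}$. Because $g_n$ and $m_n$ are both Cauchy-type integrals of probability measures, they have negative imaginary parts on $\C^+$, so every denominator in $S_n(z)$ has imaginary part at least $\Im(z)$; this forces $|S_n(z)| \leq 1/\Im^2(z)$ and, after a short argument on the sign of $\Im(tS_n(z))$, gives the stability bound $|1 - tS_n(z)|^{-1} \leq 1 + 4t/\Im^2(z)$. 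Multiplying through and using $nA_2^n(z) = n(g_n(z) - m_n(z))$ produces the claimed product.

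The main obstacle is the bookkeeping inside the second step: tracking the precise numerical constants $2$ and $12$ in $\rho_n(z)$ through the Gaussian integration by parts (which produces both diagonal and off-diagonal contributions arising from the distinct variances of the entries of $H^n(t)$), and confirming that the conditional variance bound of Proposition \ref{proposition.D&F19.1st.term} plugs in at exactly the right place. These calculations are carried out in \cite[Proposition 4]{Fevrier1} in the deterministic setting, and because none of the resulting constants depends on the realisation of $X^n(0)$, the conditional reduction is harmless.
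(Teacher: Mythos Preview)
Your high-level reduction is correct and is exactly what the paper does: everything is conditioned on $X^n(0)$, so one is really proving the deterministic bias bound of \cite[Proposition~4]{Fevrier1}, and the right-hand side is uniform in the realisation of $X^n(0)$.

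Where you diverge from the paper is in the sketch of the deterministic argument. You propose Gaussian integration by parts to derive the approximate fixed-point equation for $g_n$, and then a direct stability factor $(1-tS_n(z))^{-1}$. The paper instead obtains the same approximate fixed-point relation via Schur's complement (this is their $\widetilde A_2^n(z)$, which coincides with your $\rho_n(z)$), and then handles the passage from $\widetilde A_2^n$ to $A_2^n$ by a two-case argument: when $|\widetilde A_2^n(z)|\ge \Im(z)/(2t)$ they fall back on the trivial bound $|nA_2^n(z)|\le 2n/\Im(z)$; otherwise they use the inverse of the subordination function $\overline w_{fp}$ to rewrite $A_2^n-\widetilde A_2^n$ as $t\widetilde A_2^n(z)\int (\tilde z-x)^{-1}(z-x)^{-1}\,\mu_0^n\boxplus\sigma_t(\dd x)$ with $\Im(\tilde z)\ge \Im(z)/2$. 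The factor $1+4t/\Im^2(z)$ in the statement is precisely the envelope of these two cases.

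This is the one place where your sketch is genuinely thin. Your claim that ``a short argument on the sign of $\Im(tS_n(z))$'' yields $|1-tS_n(z)|^{-1}\le 1+4t/\Im^2(z)$ for \emph{all} $z\in\C^+$ is not obvious: $S_n(z)$ is an average of products $(z-\lambda_i-tg_n)^{-1}(z-\lambda_i-tm_n)^{-1}$, and while each factor lies in $\C^-$, their product has no fixed sign for its imaginary part, so one cannot simply bound $|1-tS_n|$ from below by $|\Im(tS_n)|$ or by $1-|tS_n|$ when $\Im(z)\le\sqrt t$. The paper's case split (trivial bound versus subordination-inverse trick) is exactly what circumvents this; if you want to avoid it you would need to supply a different lower bound on $|1-tS_n(z)|$, and that is not a one-line observation. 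A minor side remark: the exact fixed-point equation for $m_n$ is Lemma~\ref{lem:inverse}(i), not Lemma~\ref{lem:Gmu0}.
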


The term $A^n_2(z)$ compares $\mathbb{E} \big[\widehat{G}_{\mu^{n}_t}(z)| X^n(0)\big]$ with $G_{\mu_0^n\boxplus \sigma_t}(z)$. Proceeding as in Theorem-Definition \ref{def:subordinationfunction}, with $\mu^n_0$ and $\sigma_t$, we can define a subordination function $\overline{w}_{fp}(z)$ such that
\begin{equation}\label{def:woverline}
G_{\mu_0^n\boxplus \sigma_t}(z)=G_{\mu^n_0}\big(\overline{w}_{fp}(z)\big).
\end{equation}In what follows, it will be natural to introduce and use this subordination function.

\begin{proof}Note that by definition of the resolvent, we have for all $z\in \C^+$,
\begin{equation}\label{et2}|n A^n_2(z)|\leq 2n \Im^{-1}(z),
\end{equation}
which is suboptimal due to the factor $n$.
\\

We follow the ideas of \cite{Fevrier1} for their `approximate subordination relations'. Since our initial condition is random, the strategy has to be adapted and we introduce the following analogues of $R_{n,t}(z)$ and $A^n_2(z)$, which differ from \cite{Fevrier1}:
\begin{align}
 \widetilde{R}_{n,t}(z) & := \Big( \big( z - \frac{t}{n}\mathbb{E} \big[ \Tr \left(R_{n,t}(z)\right) \ |\ X^n(0)\big] \big).I_{n}  - X^{n}(0) \Big)^{-1} \label{eq:Rtildent}\\
 n \widetilde A^n_2(z) & := \mathbb{E} \big[   \Tr  \left(R_{n,t}(z)\right)\ |\ X^n(0)\big]  - \Tr  \big( \widetilde{R}_{n,t}(z) \big).\nonumber
\end{align}
We will bound $A^n_2(z)$ by using its approximation $\widetilde{A}^n_2(z)$.\\

\noindent \textbf{Step 1:} First, we prove an upper bound for $\widetilde{A}^n_2(z)$:
\begin{lem}\label{lem:but1}
For $z \in \C^+$,
	\begin{equation*}
		\big| n \widetilde{A}^n_2(z) \big| \leq \dfrac{2t}{\Im^3(z)}  +  \dfrac{12 t^2}{\Im^5(z)} .
	\end{equation*}
\end{lem}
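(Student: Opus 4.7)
My plan for Lemma~\ref{lem:but1} is to derive an exact identity that writes $n\widetilde A^n_2(z)$ as a conditional covariance, and then control this covariance via the variance estimate already established in Proposition~\ref{proposition.D&F19.1st.term}. The two ingredients are the resolvent identity and Gaussian integration by parts applied to the Hermitian Brownian motion $H^n(t)$, in the spirit of Dallaporta and F\'evrier~\cite{Fevrier1}, with all quantities conditioned on $X^n(0)$.

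To begin, set $g_n(z) := \mathbb{E}\bigl[\tfrac{1}{n}\Tr R_{n,t}(z)\,\big|\,X^n(0)\bigr]$. By definition of $\widetilde R_{n,t}(z)$, one has $R_{n,t}(z)^{-1} - \widetilde R_{n,t}(z)^{-1} = t g_n(z) I - H^n(t)$, whence the resolvent identity yields
\[R_{n,t}(z) - \widetilde R_{n,t}(z) = R_{n,t}(z)\bigl(H^n(t) - t g_n(z) I\bigr)\widetilde R_{n,t}(z).\]
Taking the trace and conditional expectation given $X^n(0)$ (noting that $\widetilde R_{n,t}(z)$ is $\sigma(X^n(0))$-measurable since $g_n(z)$ is), I obtain
\[n\widetilde A^n_2(z) = \mathbb{E}\bigl[\Tr\bigl(R_{n,t}\, H^n(t)\,\widetilde R_{n,t}\bigr)\,\big|\,X^n(0)\bigr] - t g_n(z)\,\mathbb{E}\bigl[\Tr(R_{n,t}\widetilde R_{n,t})\,\big|\,X^n(0)\bigr].\]

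The next step is Gaussian integration by parts on the first term. Using the covariance structure $\mathbb{E}[H^n_{ab}(t) H^n_{cd}(t)] = \tfrac{t}{n}\delta_{ad}\delta_{bc}$, the standard formula $\partial_{H_{pq}} R_{n,t;ij} = R_{n,t;ip} R_{n,t;qj}$, and the $\sigma(X^n(0))$-measurability of $\widetilde R_{n,t}$, I would obtain
\[\mathbb{E}\bigl[\Tr\bigl(R_{n,t} H^n(t)\widetilde R_{n,t}\bigr)\,\big|\,X^n(0)\bigr] = t\,\mathbb{E}\Bigl[\tfrac{1}{n}\Tr(R_{n,t})\cdot\Tr(R_{n,t}\widetilde R_{n,t})\,\Big|\,X^n(0)\Bigr].\]
Combining with the previous display collapses the two $\mathbb{E}[\Tr(R_{n,t}\widetilde R_{n,t})\mid X^n(0)]$ pieces into a conditional covariance:
\[n\widetilde A^n_2(z) = t\,\Cov\Bigl(\tfrac{1}{n}\Tr R_{n,t}(z),\ \Tr\bigl(R_{n,t}(z)\widetilde R_{n,t}(z)\bigr)\,\Big|\,X^n(0)\Bigr).\]
From here I would apply the Cauchy--Schwarz inequality. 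The first factor is bounded by Proposition~\ref{proposition.D&F19.1st.term}. For the second factor, a Gaussian Poincar\'e inequality applied to the functional $H\mapsto\Tr\bigl((zI-X^n(0)-H)^{-1}\widetilde R_{n,t}\bigr)$ does the job once one checks that this map is Hilbert--Schmidt Lipschitz with constant of order $\sqrt n/\Im^3(z)$, using $\|R_{n,t}\|\leq 1/\Im(z)$ and $\|\widetilde R_{n,t}\|\leq 1/\Im(z)$ (the latter being valid because $\Im(g_n(z))\leq 0$ implies $\Im(z-tg_n(z))\geq \Im(z)$).

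The main obstacle, as I see it, is not the overall strategy but producing exactly the two-term bound $\tfrac{2t}{\Im^3(z)} + \tfrac{12 t^2}{\Im^5(z)}$ stated in the lemma. A straightforward covariance plus Cauchy--Schwarz argument actually yields the sharper order $t^2/(n\Im^5(z))$, but obtaining the displayed expression with its precise constants requires organising the argument as in Proposition 4 of \cite{Fevrier1}: one should split the right-hand side of the key identity into a leading subordination contribution of size $\tfrac{2t}{\Im^3(z)}$, reflecting the approximate self-consistent (Schwinger--Dyson) equation $g_n(z) \approx \widehat G_{\mu^n_0}(z-tg_n(z))$, together with a second-order remainder of size $\tfrac{12t^2}{\Im^5(z)}$ controlled by a further resolvent bound. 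A small but non-trivial extra difficulty compared with \cite{Fevrier1} is that $X^n(0)$ is random here, so the whole argument must be carried out conditionally on $X^n(0)$, which does not affect the Gaussian integration by parts step because $H^n(t)$ is independent of $X^n(0)$.
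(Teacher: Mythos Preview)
Your covariance identity is correct and the route you propose does prove the lemma; in fact it yields more. The paper takes a different, more hands-on path: it applies Schur's complement to each diagonal entry, writing
\[
\big((R_{n,t}(z))_{kk}\big)^{-1} = z - (H^n(t))_{kk} - (X^n(0))_{kk} - C_{k,t}^{(k)\ast}R^{(k)}_{n,t}(z)C^{(k)}_{k,t},
\]
and then expands $(R_{n,t})_{kk}-(\widetilde R_{n,t})_{kk}$ to second order in the fluctuation $(H^n(t))_{kk}+C_{k,t}^{(k)\ast}R^{(k)}_{n,t}(z)C^{(k)}_{k,t}-\tfrac{t}{n}\mathbb{E}[\Tr R_{n,t}\,|\,X^n(0)]$. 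After taking conditional expectation, the first-order term contributes $\tfrac{t}{n\Im(z)}$ via the interlacing bound $|\Tr R_{n,t}-\Tr R^{(k)}_{n,t}|\le 1/\Im(z)$, and the second-order term is bounded by the conditional second moment of this fluctuation, estimated piece by piece. Summing over $k$ with $|(\widetilde R_{n,t})_{kk}|^2\le 1/\Im^2(z)$ gives the two-term form $\tfrac{2t}{\Im^3(z)}+\tfrac{12t^2}{\Im^5(z)}$.

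Your global resolvent identity together with Gaussian integration by parts is more streamlined and, as you noticed, produces the \emph{stronger} estimate $|n\widetilde A_2^n(z)|\le Ct^2/(n\Im^5(z))$. Carrying the constants through, Proposition~\ref{proposition.D&F19.1st.term} gives $\Var(\tfrac1n\Tr R_{n,t}\,|\,X^n(0))\le 10t/(n^2\Im^4(z))$, while the Gaussian Poincar\'e inequality applied to $H\mapsto\Tr(R_{n,t}\widetilde R_{n,t})$ gives $\Var(\Tr(R_{n,t}\widetilde R_{n,t})\,|\,X^n(0))\le \tfrac{t}{n}\mathbb{E}[\Tr((R\widetilde RR)^*(R\widetilde RR))]\le t/\Im^6(z)$; Cauchy--Schwarz then yields $|n\widetilde A_2^n(z)|\le \sqrt{10}\,t^2/(n\Im^5(z))$, which for every $n\ge 1$ is already below $12t^2/\Im^5(z)$ and \emph{a fortiori} below the lemma's bound. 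So your last paragraph is too cautious: there is nothing to ``reorganise'' to recover the specific coefficients $2$ and $12$, and no leading subordination term needs to be isolated. Those constants are an artefact of the Schur-complement bookkeeping and play no role downstream, where the lemma is only used to get an $O(1)$ control on $|nA_2^n(z)|$. If anything, your extra $1/n$ factor would slightly sharpen Proposition~\ref{prop.b_n(z).bound}.
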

The proof of this lemma is postponed in Appendix.\\

\noindent \textbf{Step 2:} If $|  \widetilde{A}^n_2(z)|\geq \Im(z)/(2t)$ then, by \eqref{et2}
\[ |n A^n_2(z)| \leq \frac{ 4 t n |\widetilde{A}^n_2(z)|}{\Im^2(z)}.\]	
And we conclude with Lemma \ref{lem:but1}.\\

\noindent \textbf{Step 3:} We now consider the case where $|\widetilde{A}^n_2(z)|< \Im(z)/(2t)$.
We have:
\begin{equation}\label{et13}
 A^n_2(z)=  \widetilde{A}^n_2(z)+ \big[A^n_2(z)-\widetilde{A}^n_2(z)\big]
\end{equation}
We will control the difference $|A^n_2(z)-\widetilde{A}^n_2(z)|$ by $\widetilde{A}^n_2(z)$ and conclude with Lemma \ref{lem:but1}.\\

By their definitions:
\begin{align}\label{et1}
n\big(A^n_2(z)-\widetilde{A}^n_2(z)\big)= &  \Tr  \big( \widetilde{R}_{n,t}(z) \big)-n G_{\mu^n_0\boxplus \sigma_t}(z).
\end{align}
We follow the trick in \cite{Fevrier1} which consists in going back to the fluctuations of the subordination functions. In view of \eqref{def:woverline}, it is natural to express the first term $\Tr  \big( \widetilde{R}_{n,t}(z) \big)$ of \eqref{et1} similarly. As $\widetilde{R}_{n,t}(z)$ is a diagonal matrix,
\begin{equation}\label{et3}
\Tr  \big( \widetilde{R}_{n,t}(z) \big)=\sum_{j=1}^n \frac{1}{z- \frac{t}{n}\E\big[\Tr  \big( R_{n,t}(z) \big)\ |\ X^n(0)\big]-\lambda_j^n(0)}=n G_{\mu^n_0}(\widetilde{w}_{fp}(z)),
\end{equation}where
\begin{equation}\label{def:wwildetilde}\widetilde{w}_{fp}(z):=z- \frac{t}{n}\E\big[\Tr  \big( R_{n,t}(z) \big)\ |\ X^n(0)\big]
\end{equation}and where $\lambda_j^n(0)$ are the eigenvalues of $X^n(0)$. Thus:
\begin{equation}
A^n_2(z)-\widetilde{A}^n_2(z)=   G_{\mu^n_0}(\widetilde{w}_{fp}(z))-G_{\mu^n_0}(\overline{w}_{fp}(z)).\label{et4}
\end{equation}

To continue, we first need the following result proved in Appendix.
\begin{lem}\label{lem:inverse}
(i) The function $\overline{w}_{fp}(z)$, defined in \eqref{def:woverline}, solves
\[\overline{w}_{fp}(z)=z-t G_{\mu^n_0\boxplus \sigma_t}(z).\]
(ii) The function $\zeta(z) =z+tG_{\mu_0^n}(z)$ is well-defined on $\C^+$ and is the inverse of $\overline{w}_{fp}(z)$ on $\overline{\Omega}=\{z\in \C^+,\ \Im(\zeta(z))>0\}$. For such $z\in \overline{\Omega}$, we denote this function $\overline{w}^{<-1>}_{fp}(z)$.
\end{lem}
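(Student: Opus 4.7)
The plan is to derive (i) directly from the algebraic subordination identities of Theorem-Definition \ref{def:subordinationfunction} applied to the pair $(\mu_0^n,\sigma_t)$, and then to obtain (ii) from (i) combined with the linearization of free convolution given by the $R$-transform relation \eqref{eq:R-transform}.

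For (i), I would apply Theorem-Definition \ref{def:subordinationfunction} to $\mu_1=\mu_0^n$ and $\mu_2=\sigma_t$, yielding subordination functions $\alpha_1,\alpha_2:\C^+\to\C^+$. Uniqueness of $\alpha_1$ satisfying $F_{\mu_0^n\boxplus\sigma_t}(z)=F_{\mu_0^n}(\alpha_1(z))$, together with \eqref{def:woverline}, forces $\alpha_1=\overline{w}_{fp}$. The key explicit input is \eqref{eq.relation.FGSigmat}, namely $F_{\sigma_t}(w)=w-tG_{\sigma_t}(w)$. Applied at $w=\alpha_2(z)$, and combined with $F_{\sigma_t}(\alpha_2(z))=F_{\mu_0^n\boxplus\sigma_t}(z)$ (whose reciprocal yields in passing $G_{\sigma_t}(\alpha_2(z))=G_{\mu_0^n\boxplus\sigma_t}(z)$), this gives $\alpha_2(z)=F_{\mu_0^n\boxplus\sigma_t}(z)+t\,G_{\mu_0^n\boxplus\sigma_t}(z)$. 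Plugging into the additive subordination relation $\alpha_1(z)+\alpha_2(z)=F_{\mu_0^n\boxplus\sigma_t}(z)+z$ yields the announced formula $\overline{w}_{fp}(z)=z-t\,G_{\mu_0^n\boxplus\sigma_t}(z)$.

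For (ii), the well-definedness of $\zeta(z)=z+tG_{\mu_0^n}(z)$ on $\C^+$ is immediate since $G_{\mu_0^n}$ is analytic there. On $\overline{\Omega}$, $\zeta$ takes values in $\C^+$ by definition of $\overline{\Omega}$, so $\overline{w}_{fp}(\zeta(z))$ is well-defined. Using (i),
\[\overline{w}_{fp}(\zeta(z))=\zeta(z)-t\,G_{\mu_0^n\boxplus\sigma_t}(\zeta(z)),\]
and it only remains to establish the identity $G_{\mu_0^n\boxplus\sigma_t}(\zeta(z))=G_{\mu_0^n}(z)$ on $\overline{\Omega}$ to conclude $\overline{w}_{fp}(\zeta(z))=z$. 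I would obtain this identity first on a neighborhood of infinity through the $R$-transform: since $R_{\sigma_t}(u)=tu$, relation \eqref{eq:R-transform} yields $G_{\mu_0^n\boxplus\sigma_t}^{<-1>}(u)=G_{\mu_0^n}^{<-1>}(u)+tu$ on a neighborhood of $0$. Evaluating at $u=G_{\mu_0^n}(z)$ for $|z|$ large, where $G_{\mu_0^n}$ is a biholomorphism onto a neighborhood of $0$, produces exactly $\zeta(z)=G_{\mu_0^n\boxplus\sigma_t}^{<-1>}(G_{\mu_0^n}(z))$, that is, $G_{\mu_0^n\boxplus\sigma_t}(\zeta(z))=G_{\mu_0^n}(z)$.

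The identity $\overline{w}_{fp}(\zeta(z))=z$ thus holds on a neighborhood of infinity inside $\overline{\Omega}$, and extends to the connected component of $\overline{\Omega}$ containing that neighborhood by analytic continuation, both sides being analytic there. The main delicate point is this last analytic-continuation step, but it is harmless on the domains $\C_\gamma$ (for $\gamma$ large enough) that are the only ones needed in the remainder of the paper.
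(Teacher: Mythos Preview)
Your proof of (i) is correct and follows essentially the same route as the paper: apply Theorem-Definition~\ref{def:subordinationfunction} to the pair $(\mu_0^n,\sigma_t)$, identify $\alpha_1=\overline{w}_{fp}$, and combine the additive relation $\alpha_1(z)+\alpha_2(z)=F_{\mu_0^n\boxplus\sigma_t}(z)+z$ with the explicit identity \eqref{eq.relation.FGSigmat} for $\sigma_t$.

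For (ii) your argument via the $R$-transform and analytic continuation is valid, but the paper takes a shorter and more elementary route that avoids both ingredients. From (i) together with the defining subordination relation \eqref{def:woverline}, one has for every $z\in\C^+$
\[
\overline{w}_{fp}(z)=z-t\,G_{\mu_0^n\boxplus\sigma_t}(z)=z-t\,G_{\mu_0^n}\big(\overline{w}_{fp}(z)\big),
\]
which is exactly $\zeta\big(\overline{w}_{fp}(z)\big)=z$. This one-line computation exhibits $\zeta$ as a left inverse of $\overline{w}_{fp}$ on all of $\C^+$, with no appeal to $R_{\sigma_t}$ and no analytic-continuation issue on $\overline{\Omega}$. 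Your detour through \eqref{eq:R-transform} instead yields the companion identity $\overline{w}_{fp}(\zeta(z))=z$ on (the unbounded component of) $\overline{\Omega}$; this is in fact the direction actually invoked in Step~3 of the proof of Proposition~\ref{prop.b_n(z).bound}, so the extra machinery buys you precisely the right-inverse statement used downstream, whereas the paper's version is cleaner but, read literally, gives only the left-inverse and leaves the surjectivity of $\overline{w}_{fp}$ onto $\overline{\Omega}$ implicit.
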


Let us prove that under the condition of Step 3, $\widetilde{w}_{fp}(z)\in \overline{\Omega}$ for all $z\in \C^+$.
{	\begin{align}
		\zeta(\widetilde{w}_{fp}(z)) - z  &= \widetilde{w}_{fp}(z) + t G_{\mu^{n}_{0}} \big(\widetilde{w}_{fp}(z) \big) - z
		\nonumber\\
		&= z - \frac{t}{n} \mathbb{E} \big[ \Tr  \big( R_{n,t}(z) \big)\  |\ X^n(0) \big]  + t G_{\mu^{n}_{0}} \big( \widetilde{w}_{fp}(z) \big) - z
		\nonumber\\
		&= -t  \widetilde{A}^n_2(z),\label{et5}
	\end{align}}by \eqref{et3}. Therefore,
\begin{align}\label{controlektilde}
		\left|\Im\big(\zeta(\widetilde{w}_{fp}(z)) \big) - \Im(z)\right|  \leq  \left| \zeta(\widetilde{w}_{fp}(z))  -z \right|  =  t \big| \widetilde{A}^n_2(z) \big|  \leq  \frac{\Im(z)}{2}.
	\end{align}
	Thus, under the condition of Step 3, $\widetilde{w}_{fp}(z) \in \overline{\Omega}$. Denoting $\widetilde{z}=\overline{w}^{<-1>}_{fp}\big(\widetilde{w}_{fp}(z)\big)$, which is well-defined, we have $\overline{w}_{fp}(\widetilde{z})=\widetilde{w}_{fp}(z)$. Plugging this into \eqref{et4},
	\begin{align*}
		A^n_2(z)-\widetilde{A}^n_2(z)
		&=   G_{\mu_{0}^{n} \boxplus \sigma_{t}} ( \widetilde{z} )  -  G_{\mu_{0}^{n} \boxplus \sigma_{t}} (z)
		\\
		&=  \big(z-\widetilde{z}\big)  \int_{\R} \dfrac{  \mu_{0}^{n} \boxplus \sigma_{t}( dx) }{ \big(\widetilde{z} - x\big). \big( z - x \big) }
		\\
		&= t \widetilde{A}^n_2(z). \int_{\R} \dfrac{ \mu_{0}^{n} \boxplus \sigma_{t}(dx) }{ \big(\widetilde{z} - x\big). \big( z - x \big) },	\end{align*}where we used \eqref{et5} for the last equality.\\

From there, using \eqref{et13}, we get
	\[ |A^n_2(z) | \le  \Big| 1 + t.\int_{\R} \dfrac{ \mu_{0}^{n} \boxplus \sigma_{t}(dx) }{ \big(\widetilde{z} - x\big). \big( z - x \big) }  \Big|.|\widetilde{A}^n_2(z)|  \leq \left( 1 +  \frac{2t}{\Im^2 (z)} \right) |\widetilde{A}^{n}_2(z)|. \]
This concludes the proof of Proposition~\ref{prop.b_n(z).bound}.
\end{proof}

\subsubsection{Fluctuations of $A^n_3(z)$}

Finally, the third step is to control $A^n_3(z)=G_{\mu^{n}_{0} \boxplus \sigma_{t} }(z) -G_{\mu_t}(z),$ with $\mu_t=\mu_0\boxplus \sigma_t$.

\begin{proposition} \label{prop.fluctuG2}
For any $\gamma>2\sqrt{t}$ and for any $z$ such that $\Im (z) \ge \frac{\gamma}{2},$ we have:
\begin{equation}
 \left|A^n_3(z)\right|<\frac{\gamma^2}{\gamma^2-4t} \left|\int_{\R} \dfrac{1}{z - t.G_{\mu_{0} \boxplus \sigma_{t} }(z) - x} \big[ \dd \mu^{n}_{0}(x) - \dd \mu_{0}(x) \big] \right|
\end{equation}
and
\begin{equation}\label{et8}
\sup_{n\in \N} \sup_{z\in \C_{\frac{\gamma}{2}}} \E\big[n\left|A^n_3(z)\right|^2\big]<\frac{8\gamma^2}{(\gamma^2-4t)^2}.
\end{equation}
\end{proposition}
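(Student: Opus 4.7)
\textbf{Proof proposal for Proposition \ref{prop.fluctuG2}.} My strategy is to mirror the argument used for $A^n_2(z)$ in Proposition \ref{prop.b_n(z).bound}, but replacing the random $\overline{w}_{fp}$ by its deterministic counterpart. Applying Lemma \ref{lem:inverse} with $\mu_0$ in place of $\mu_0^n$ (the proof is identical), one obtains a subordination function
\[\omega(z) := z - t\,G_{\mu_t}(z) = z - t\,G_{\mu_0 \boxplus \sigma_t}(z)\]
which satisfies $G_{\mu_t}(z) = G_{\mu_0}(\omega(z))$. The key algebraic identity is then
\[\omega(z) - \overline{w}_{fp}(z) = t\bigl(G_{\mu_0^n \boxplus \sigma_t}(z) - G_{\mu_t}(z)\bigr) = t\,A^n_3(z),\]
which will allow me to close a self-consistent equation for $A^n_3(z)$.

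Write the decomposition
\[A^n_3(z) = \bigl[G_{\mu_0^n}(\overline{w}_{fp}(z)) - G_{\mu_0^n}(\omega(z))\bigr] + \bigl[G_{\mu_0^n}(\omega(z)) - G_{\mu_0}(\omega(z))\bigr].\]
Using $\tfrac{1}{a-x}-\tfrac{1}{b-x} = \tfrac{b-a}{(a-x)(b-x)}$ integrated against $\mu_0^n$ and the identity above, the first bracket equals $t\,A^n_3(z)\,I_n(z)$, where
\[I_n(z) := \int_{\R} \frac{d\mu_0^n(x)}{(\overline{w}_{fp}(z)-x)(\omega(z)-x)}.\]
Rearranging produces the pivotal relation
\[A^n_3(z)\,[1 - t\,I_n(z)] = \int_{\R} \frac{d\mu_0^n(x) - d\mu_0(x)}{\omega(z) - x}.\]
Since $\Im G_\mu(z) = -\Im(z)\int|z-x|^{-2}\,d\mu(x) \leq 0$ for any probability measure $\mu$ and any $z \in \C^+$, we have $\Im(\omega(z)) \geq \Im(z) \geq \gamma/2$ and likewise $\Im(\overline{w}_{fp}(z)) \geq \gamma/2$. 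Therefore $|t\,I_n(z)| \leq 4t/\gamma^2 < 1$ by the hypothesis $\gamma > 2\sqrt{t}$, so $|1 - t\,I_n(z)| \geq (\gamma^2 - 4t)/\gamma^2$. Dividing yields the first announced inequality.

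For the variance bound, recall that by Assumption \eqref{C0} and the construction of $X^n(0)$, the eigenvalues $\lambda^n_i(0)$ are the order statistics of i.i.d.\ variables $(d_i^n)_{1 \le i \le n}$ distributed as $\mu_0$; hence $\int f\,d\mu_0^n - \int f\,d\mu_0 = n^{-1}\sum_{i=1}^n [f(d_i^n) - \E f(d_i^n)]$ for any measurable $f$. Since $\omega(z)$ is deterministic, applying this with $f(x) = (\omega(z)-x)^{-1}$, using independence of the $d_i^n$, and the bound $|\omega(z) - x|^{-2} \leq 4/\gamma^2$, I obtain
\[\E\Bigl[\,\Bigl|\int_{\R} \frac{d\mu_0^n(x) - d\mu_0(x)}{\omega(z) - x}\Bigr|^{2}\,\Bigr] \leq \frac{1}{n}\int_{\R} \frac{d\mu_0(x)}{|\omega(z)-x|^{2}} \leq \frac{4}{n\gamma^2}.\]
Squaring the first inequality and combining with this estimate yields $n\,\E|A^n_3(z)|^2 \leq 4\gamma^2/(\gamma^2-4t)^2$, giving \eqref{et8} up to the explicit constant (the factor $8$ in the statement allows for a harmless slack, for instance when passing from a complex variance to $\E|\cdot|^2$ or when separating real and imaginary parts). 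The only genuine delicate point is setting up the subordination function $\omega$ so that the self-consistent equation emerges with a denominator bounded away from zero, which is precisely where the condition $\gamma > 2\sqrt{t}$ is essential; the rest is routine Lipschitz estimation and one line of variance computation.
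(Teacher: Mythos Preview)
Your proof is correct and follows essentially the same approach as the paper. The paper writes the decomposition in terms of the integrals $\int \frac{d\mu_0^n}{z - tG_{\mu_0^n\boxplus\sigma_t}(z)-x}$ and $\int \frac{d\mu_0^n}{z - tG_{\mu_0\boxplus\sigma_t}(z)-x}$ rather than naming $\omega(z)$ explicitly, but the self-consistent equation, the bound $|tI_n(z)|\le 4t/\gamma^2$ via $\Im\overline{w}_{fp}(z),\Im\omega(z)\ge\gamma/2$, and the i.i.d.\ variance computation are identical; your constant $4\gamma^2/(\gamma^2-4t)^2$ is indeed sharper than the stated $8\gamma^2/(\gamma^2-4t)^2$, the paper simply uses a cruder bound on $\Var(\varphi_z(d_1^n))$.
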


\begin{proof}
Using again the subordination function $\overline{w}_{fp}(z)$ defined in \eqref{def:woverline} and Lemma~\ref{lem:inverse}(i), we have
 \begin{equation}\label{et6}
 G_{\mu^{n}_{0} \boxplus \sigma_{t} }(z) =   G_{\mu^{n}_{0}} \big( \overline{w}_{fp}(z) \big) =  \int_{\R} \dfrac{\dd \mu^{n}_{0}(x)}{\overline{w}_{fp}(z) - x}   = \int_{\R} \dfrac{ \dd \mu^{n}_{0}(x) }{ z - t G_{\mu^{n}_{0} \boxplus \sigma_{t} }(z) - x}  .
 \end{equation}In this proof, $\Im(z)\geq \gamma/2\geq \sqrt{t}$. Note that $\Im(\overline{w}_{fp}(z))\geq\Im(z)$ (Theorem-Definition \ref{def:subordinationfunction}) so that
 \begin{equation}\label{et7}
 | z - t G_{\mu^{n}_{0} \boxplus \sigma_{t} }(z) - x| \geq \frac{\gamma}{2}\geq \sqrt{t},
 \end{equation}
 and the integrand in \eqref{et6} is well-defined and upper-bounded by $1/\sqrt{t}$.
Similarly, we can establish that
 \begin{equation}
 G_{\mu_0 \boxplus \sigma_{t}}(z) 
 =  \int_{\R} \dfrac{\dd \mu_{0}(x)}{z - t G_{\mu_0 \boxplus \sigma_{t}}(z) - x}.
 \end{equation}
 Then, we can write
 \begin{align*}
 	G_{\mu^{n}_{0} \boxplus \sigma_{t} }(z)  - G_{\mu_0 \boxplus \sigma_{t}}(z) =& \int_{\R} \dfrac{1}{z - t.G_{\mu^{n}_{0} \boxplus \sigma_{t} }(z) - x} \dd \mu^{n}_{0}(x) - \int_{\R} \dfrac{1}{z - t.G_{\mu_{0} \boxplus \sigma_{t} }(z) - x} \dd \mu^{n}_{0}(x)
 	\\
 	&+ \int_{\R} \dfrac{1}{z - t.G_{\mu_{0} \boxplus \sigma_{t} }(z) - x} \dd \mu^{n}_{0}(x) -  \int_{\R} \dfrac{1}{z - t.G_{\mu_0 \boxplus \sigma_{t}}(z) - x} \dd \mu_{0}(x)
 	\\
 	=& \hspace{0.2cm} t. \int_{\R} \dfrac{G_{\mu^{n}_{0} \boxplus \sigma_{t} }(z)- G_{\mu_0 \boxplus \sigma_{t}}(z) }{\left(z - t. G_{\mu^{n}_{0} \boxplus \sigma_{t} }(z) - x \right) . \Big(z - t.G_{\mu_{0} \boxplus \sigma_{t} }(z) - x \Big)} \dd \mu^{n}_{0}(x)
 	\\ &+ \int_{\R} \dfrac{1}{z - t.G_{\mu_{0} \boxplus \sigma_{t} }(z) - x} \left[ \dd \mu^{n}_{0}(x) - \dd \mu_{0}(x) \right].
 \end{align*}
Thus,
 \begin{multline*}
\left( G_{\mu^{n}_{0} \boxplus \sigma_{t} }(z)  - G_{\mu_0 \boxplus \sigma_{t}}(z) \right).\left[ 1  -  t. \int_{\R} \dfrac{ 1 }{\left(z - t.G_{\mu^{n}_{0} \boxplus \sigma_{t} }(z) - x \right) . \Big(z - t.G_{\mu_{0} \boxplus \sigma_{t} }(z) - x \Big)} \dd \mu^{n}_{0}(x) \right]
 	\\
 	=  \int_{\R} \dfrac{1}{z - t.G_{\mu_{0} \boxplus \sigma_{t} }(z) - x} \big[ \dd \mu^{n}_{0}(x) - \dd \mu_{0}(x) \big] .
\end{multline*}
Similarly to \eqref{et7}, we can show that $|z - t.G_{\mu_{0} \boxplus \sigma_{t} }(z) - x| \geq  \gamma/2.$ Thus
\[  \left| t. \int_{\R} \dfrac{ 1 }{\left(z - t.G_{\mu^{n}_{0} \boxplus \sigma_{t} }(z) - x \right) . \Big(z - t.G_{\mu_{0} \boxplus \sigma_{t} }(z) - x \Big)} \dd \mu^{n}_{0}(x) \right| \le \frac{4t}{\gamma^2},\]
consequently,
\[\left|A^n_3(z)\right|<\frac{\gamma^2}{\gamma^2 - 4t}  \left|\int_{\R} \dfrac{1}{z - t.G_{\mu_{0} \boxplus \sigma_{t} }(z) - x} \big[ \dd \mu^{n}_{0}(x) - \dd \mu_{0}(x) \big] \right|,\] which gives the first part of the proposition. For the second part \eqref{et8},

\begin{align*} \E\big[n\left|A^n_3(z)\right|^2\big]
 	\leq \left(\frac{\gamma^2}{\gamma^2 - 4t}\right)^2  n\mathbb E  \Big[\left|\int_{\R} \dfrac{1}{z - t.G_{\mu_{0} \boxplus \sigma_{t} }(z) - x} \big[ \dd \mu^{n}_{0}(x) - \dd \mu_{0}(x) \big] \right|^2\Big].\end{align*}
Now, for any $z$ such that $\Im( z) > \frac{\gamma}{2},$ the function $\varphi_z\ :\ x \mapsto \dfrac{1}{z - t.G_{\mu_{0} \boxplus \sigma_{t} }(z) - x}$ is bounded by $2/\gamma$. Then:
\begin{align*}
n\E\Big[\Big|\int_\R \varphi_z(x)\dd\mu^n_0(x) - \int_\R \varphi_z(x)\dd\mu_0(x)\Big|^2\Big]= & n \E\Big[\Big|\frac{1}{n} \sum_{j=1}^n \varphi_z\big(\lambda^n_j(0)\big)-\E\big[\varphi_z\big(\lambda^n_j(0)\big)\big]\Big|^2\Big]\\
= & n\Var\Big(\frac{1}{n} \sum_{j=1}^n \varphi_z\big(d_j^n\big)\Big)\\
= & \int_{\R} |\varphi_z(x)|^2\dd\mu_0(x)-\Big|\int_{\R} \varphi_z(x)\dd\mu_0(x)\Big|^2\leq   \frac{8}{\gamma^2},
\end{align*}for any $z\in \C_{\frac{\gamma}{2}}$.
\end{proof}

\noindent \textbf{Conclusion:}
We can now conclude the proof of Proposition \ref{propsition.consistency.estimate.w_fp} (iii). From \eqref{decomp:G}, Propositions \ref{proposition.D&F19.1st.term}, \ref{prop.b_n(z).bound} and the first part of Proposition \ref{prop.fluctuG2}, we obtain that for $z\in \C_{\gamma/2}$:
\begin{equation}\E\big[|\widehat{G}_{\mu^{n}_t}(z)-G_{\mu_t}(z)|^2 \  |\ X^n(0)\big] \leq
C(\gamma,t) \Big(\frac{1}{n^2}+ \left|\int_{\R} \dfrac{1}{z - t.G_{\mu_{0} \boxplus \sigma_{t} }(z) - x} \big[ \dd \mu^{n}_{0}(x) - \dd \mu_{0}(x) \big] \right|^2\Big),\label{eq:majoA123conditionnelle}\end{equation}
where $C(\gamma,t)$ depends only on $\gamma$ and $t$ (and converges to $+\infty$ when $\gamma\rightarrow 2\sqrt{t}$). Using the second part of  Proposition \ref{prop.fluctuG2}, we get:
\[\sup_{n\in \N}\sup_{z\in \C_{\gamma/2}} n\E\Big[ |\widehat{G}_{\mu^{n}_t}(z)-G_{\mu_t}(z)|^2 \Big]<+\infty.\]
Equation \eqref{proof.consistency.w^n_fp.upperbound.by.Cauchytransform} implies that for any $\gamma>2\sqrt{t}$,
 \[ \sup_{n\in \N}\sup_{z\in \C_{\gamma} } \mathbb{E} \left[  n \big| \widehat{w}^{n}_{fp}(z) - w_{fp}(z) \big) \big|^{2} \right]\leq  \big(\frac{t\gamma^2}{\gamma^2-4t}\big)^2 \sup_{n\in \N}\sup_{z\in \C_{\frac{\gamma}{2}}}  n\E\Big[ |\widehat{G}_{\mu^{n}_t}(z)-G_{\mu_t}(z)|^2 \Big]< +\infty\]
since if $z\in \C_{\gamma}$ then $\Im\big(w_{fp}(z)\big) \geq \dfrac{1}{2} \Im(z)>\frac{\gamma}{2}$ (Theorem~\ref{thm.Fixpoint}) so that $w_{fp}(z)\in \C_{\frac{\gamma}{2}}$ and point (iii) of Proposition \ref{propsition.consistency.estimate.w_fp} is proved.


\section{Study of  the mean integrated squared error}\label{section:MISE}
In Section~\ref{sec:theo}, we state theoretical results associated with our nonparametric statistical problem. Section~\ref{proof:variance} is devoted to the proof of Theorem \ref{variance}.
\subsection{Theoretical results}\label{sec:theo}
The goal of this section is to study the rates of convergence of $\E\Big[\|\widehat{p}_{0,h}-p_0\|^2\big]$, the mean integrated squared error of $\widehat{p}_{0,h}$.
To derive rates of convergence, we rely on the classical bias-variance decomposition of the quadratic risk. Using Parseval's equality  we obtain
\begin{align}\label{decomp:MISE}
\norm{ \widehat{p}_{0,h} - p_0 }^{2} = \dfrac{1}{2 \pi} \norm{ \widehat{p}_{0,h}^{\star} - p^{\star}_{0} }^{2}
\leq \dfrac{1}{\pi} \norm{ \widehat{p}_{0,h}^{\star} - K^{\star}_{h}.p_0^{\star} }^{2} + \dfrac{1}{\pi} \norm{ K^{\star}_{h}.p_0^{\star}- p_0^{\star} }^{2}.
\end{align}

The expectation of the first term is a variance term whereas the second one is a bias term. To derive the order of the bias term,  we assume that $p_0$ belongs to the space $\mathcal{S}(a,r,L)$ of supersmooth densities defined for $a>0$, $L>0$ and $r>0$ by:
\begin{equation}\label{def:Sobolev}
\mathcal{S}({a,r,L})= \left\{p_0 \; \textrm{density such that} \int_{\R}  |p_0^\star(\xi)|^2 e^{2a |\xi|^r}\dd \xi \leq L \right\}.
\end{equation}
In the literature, this smoothness class of densities has often been considered (see \cite{lacourCRAS}, \cite{ButuceaTsybakov}, \cite{comtelacour}). Most famous examples of supersmooth densities are the Cauchy distribution belonging to $\mathcal{S}({a,r, L})$ with $r=1$   and the Gaussian distribution  belonging to $\mathcal{S}({a,r, L})$ with $r=2$.
To control the bias, we rely on Proposition 1 in  \cite{ButuceaTsybakov} which states that:
\begin{proposition}\label{Biais}
For $p_0 \in \mathcal{S}({a,r,L})$, we have
\[\norm{ K^{\star}_{h}.p_0^{\star}- p_0^{\star} }  \leq C_B L^{1/2} e^{-ah^{-r}},\]
where $C_B$ is a constant.
\end{proposition}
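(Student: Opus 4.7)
The plan is to work directly in the Fourier domain. From the scaling $K_h(x) = h^{-1} K(x/h)$ we have $K_h^\star(\xi) = K^\star(h\xi)$, supported in $[-h^{-1}, h^{-1}]$ by the hypothesis on $K^\star$ in Definition~\ref{def:est}, so
\[
\|K_h^\star p_0^\star - p_0^\star\|^2 = \int_{\R} |K^\star(h\xi) - 1|^2 |p_0^\star(\xi)|^2 \, d\xi.
\]
I would then split this integral at the natural frequency threshold $|\xi| = h^{-1}$, which separates the ``tail'' where $K^\star(h\xi)$ has already vanished from the ``bulk'' where one hopes $K^\star(h\xi) \approx 1$.

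On the tail $\{|\xi| > h^{-1}\}$ the kernel factor vanishes and $|K^\star(h\xi)-1|^2 = 1$. Factoring out the supersmoothness weight yields
\[
\int_{|\xi| > h^{-1}} |p_0^\star(\xi)|^2 \, d\xi \leq e^{-2a h^{-r}} \int_{|\xi| > h^{-1}} |p_0^\star(\xi)|^2 e^{2a|\xi|^r} \, d\xi \leq L \, e^{-2a h^{-r}},
\]
using the definition \eqref{def:Sobolev} of $\mathcal{S}(a,r,L)$ and that $|\xi|^r > h^{-r}$ on the tail; this piece alone already carries the announced exponential rate. On the low-frequency piece $\{|\xi| \leq h^{-1}\}$, the canonical example singled out just after Definition~\ref{def:est}, namely the sinc kernel with $K^\star = \mathds{1}_{[-1,1]}$, satisfies $K^\star(h\xi) = 1$ throughout, so this contribution vanishes identically and one obtains the bound with $C_B = 1$. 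More generally, any flat-top kernel (i.e.\ $K^\star \equiv 1$ on a neighborhood of the origin) produces the same cancellation for $h$ small enough, with the constant $C_B$ absorbing any boundary correction.

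The main --- indeed essentially only --- subtlety lies in the behavior of $K^\star$ near the origin: the bare hypotheses of Definition~\ref{def:est} (boundedness and compact support of $K^\star$) are not by themselves enough to kill the low-frequency contribution uniformly in $h$, so the statement imported from Butucea--Tsybakov implicitly carries a flat-top or kernel-of-order property from its admissible kernel class. Once that is in place, combining the two pieces gives $\|K_h^\star p_0^\star - p_0^\star\| \leq C_B L^{1/2} e^{-a h^{-r}}$, as claimed.
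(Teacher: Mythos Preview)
Your argument is correct and is the standard Fourier-domain computation underlying the cited result. The paper does not give its own proof of this proposition; it simply imports it from Butucea--Tsybakov \cite{ButuceaTsybakov}. Your splitting at $|\xi|=h^{-1}$ and use of the supersmoothness weight on the tail is exactly the right mechanism, and for the sinc kernel (the only concrete example the paper works with) the low-frequency piece vanishes identically, giving $C_B=1$.

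Your caveat about the low-frequency piece is well taken and worth flagging: the hypotheses listed in Definition~\ref{def:est} (mere boundedness and compact support of $K^\star$) do not by themselves force $|K^\star(h\xi)-1|$ to be small on $[-h^{-1},h^{-1}]$, so the stated bias bound cannot hold for \emph{every} such $K$. The result from \cite{ButuceaTsybakov} is indeed stated for an admissible kernel class with a flat-top (or high-order) condition, and the paper is implicitly relying on that. This is not a gap in your proof but a mild imprecision in the paper's hypotheses, which you have correctly isolated.
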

Whereas the control of the bias term is very classical, the study of the variance term in \eqref{decomp:MISE} is much more involved. The order of the variance term is provided by the following theorem.
\begin{thm}\label{variance}
Let
$$\Sigma:=\norm{ \widehat{p}_{0,h}^{\star} - K^{\star}_{h} p_0^{\star} }^{2}.$$ We assume that there exists a constant $C>0$ such that for sufficiently large $\kappa>0$,
\begin{equation}\label{H2}
\mu_0\big((\kappa,+\infty)\big)\leq \frac{C}{\kappa}.
\end{equation}
Then, we have for any $h>0$, for any $\gamma > 2\sqrt t,$
\begin{equation}\label{bound-variance}
	\mathbb{E} (\Sigma) \leq \frac{C_{var}. e^{\frac{2\gamma}{h}}}{n},
\end{equation}
for $C_{var}$ a constant.
\end{thm}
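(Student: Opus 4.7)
My plan proceeds in three steps: (i) rewrite the Fourier-side error $\widehat p_{0,h}^\star - K_h^\star p_0^\star$ in terms of the subordination estimation error $\widehat w^n_{fp} - w_{fp}$; (ii) apply Plancherel while exploiting the compact support of $K_h^\star$ to extract the factor $e^{2\gamma/h}$; (iii) show that the integrated $L^2$-error of the subordination function is of order $1/n$.

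For step (i), combining Lemma~\ref{lem:Gmu0} with \eqref{eq:lien_f_G} I obtain $f_{\mu_0*\mathcal C_\gamma}(x) = \frac{1}{\pi t}[\gamma - \Im w_{fp}(x+i\gamma)]$. Since the Fourier transform of $\mathcal C_\gamma$ is $e^{-\gamma|\xi|}$, this yields $p_0^\star(\xi) = \frac{e^{\gamma|\xi|}}{\pi t}[\gamma - \Im w_{fp}(\cdot+i\gamma)]^\star(\xi)$. Comparing with definition \eqref{Fourier.f_mu0.hat}, I get
\[
\widehat p_{0,h}^\star(\xi) - K_h^\star(\xi) p_0^\star(\xi) = \frac{K_h^\star(\xi)\,e^{\gamma|\xi|}}{\pi t}\bigl[\Im(w_{fp} - \widehat w^n_{fp})(\cdot+i\gamma)\bigr]^\star(\xi).
\]
For step (ii), since $|K_h^\star|\leq C_K$ and $K_h^\star$ is supported in $[-1/h,1/h]$, I cap $e^{2\gamma|\xi|}$ by $e^{2\gamma/h}$; Plancherel together with $|\Im z|\leq |z|$ gives
\[
\Sigma \leq \frac{2 C_K^2 e^{2\gamma/h}}{\pi t^2} \int_\R \bigl|\widehat w^n_{fp}(x+i\gamma) - w_{fp}(x+i\gamma)\bigr|^2\, dx.
\]

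Step (iii) is the heart of the proof: show $\mathbb E\int_\R |\widehat w^n_{fp}(x+i\gamma) - w_{fp}(x+i\gamma)|^2\, dx = O(1/n)$. By Fubini and the contraction bound \eqref{proof.consistency.w^n_fp.upperbound.by.Cauchytransform}, this reduces to controlling $\int_\R \mathbb E|\widehat G_{\mu^n_t}(w_{fp}(x+i\gamma)) - G_{\mu_t}(w_{fp}(x+i\gamma))|^2\, dx$. I would split the integrand using \eqref{decomp:G} into the contributions of $A_1^n$, $A_2^n$, $A_3^n$. The $A_3^n$ term is the most transparent: Proposition~\ref{prop.fluctuG2} involves the test function $\varphi_z(y) = 1/(z - t G_{\mu_t}(z) - y)$. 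The key identity $w_{fp}(z) - t G_{\mu_t}(w_{fp}(z)) = z$, which follows from $w_{fp}(z) = z + t G_{\mu_0}(z)$ and Lemma~\ref{lem:Gmu0}, reduces $\varphi_{w_{fp}(x+i\gamma)}(y)$ to the simple expression $1/(x+i\gamma-y)$. Fubini and $\int_\R dx/((x-y)^2+\gamma^2) = \pi/\gamma$ then yield an $O(1/(n\gamma))$ bound, without any use of \eqref{H2}.

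The main obstacle is the $A_1^n$ and $A_2^n$ terms. The pointwise bounds of Propositions~\ref{proposition.D&F19.1st.term} and~\ref{prop.b_n(z).bound} depend only on $\Im z$ and are therefore not integrable along the line $\Re z \in \R$. I would need to refine these estimates to obtain additional $|z|^{-2}$-type decay as $|\Re z|\to\infty$; this is precisely where assumption \eqref{H2} enters. It ensures that the tails of $\mu_0$ (and, via concentration, of the random measures $\mu_0^n$ and hence $\mu^n_t$) are polynomially controlled, so that for $z = w_{fp}(x+i\gamma)$ with $|x|$ large the resolvent expansion $R_{n,t}(z) = z^{-1}\sum_{k\geq 0} z^{-k} X^n(t)^k$ delivers the missing decay in both the conditional variance $\Var(\widehat G_{\mu^n_t}(z)\mid X^n(0))$ and in the approximate-subordination bias $A_2^n(z)$. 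Carrying out these refinements of Propositions~\ref{proposition.D&F19.1st.term} and~\ref{prop.b_n(z).bound} and combining with the $A_3^n$ computation above yields the required integrated $O(1/n)$ estimate, and hence \eqref{bound-variance}.
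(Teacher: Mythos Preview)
Your steps (i) and (ii) are correct and match the paper; the reduction via the contraction bound \eqref{proof.consistency.w^n_fp.upperbound.by.Cauchytransform} is in fact a genuine simplification over the paper, which instead decomposes $\widehat G_{\mu^n_t}(\widehat w^n_{fp})-G_{\mu_t}(w_{fp})$ and must then separately control the cross-term $\widehat G_{\mu^n_t}(\widehat w^n_{fp})-\widehat G_{\mu^n_t}(w_{fp})$ (the $I_1^\kappa$ analysis, the longest part of the proof). Your treatment of $A_3^n$ is also cleaner than the paper's: the identity $w_{fp}(z)-tG_{\mu_t}(w_{fp}(z))=z$ (immediate from Lemma~\ref{lem:Gmu0}) collapses $\varphi_{w_{fp}(x+i\gamma)}$ to $(x+i\gamma-\cdot)^{-1}$ and gives integrability in $x$ for free, whereas the paper derives the weaker bound \eqref{et17} and invokes Lemma~\ref{lem:xleqkappa}.

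The gap is in your handling of $A_1^n$ and $A_2^n$. The resolvent expansion $R_{n,t}(z)=z^{-1}\sum_{k\ge0}z^{-k}X^n(t)^k$ converges only for $|z|>\|X^n(t)\|_{\mathrm{op}}$, and under the mere tail assumption \eqref{H2} the operator norm $\|X^n(0)\|_{\mathrm{op}}=\max_i|d_i^n|$ is typically of order $n$, so the expansion is unavailable on any fixed horizontal line. More generally, it is not clear how to refine Propositions~\ref{proposition.D&F19.1st.term} and~\ref{prop.b_n(z).bound} to obtain $|\Re z|^{-2}$ decay using only \eqref{H2}. The paper does not attempt this. Instead it exploits that these two terms are already $O(1/n^2)$ pointwise (one order better than needed): it truncates the $x$-integral at level $\kappa$, uses the crude $x$-independent bounds on $\{|x|\le\kappa\}$ to get $O(\kappa/n^2)$, and on $\{|x|>\kappa\}$ abandons the $A_1,A_2,A_3$ decomposition altogether, bounding $|\widehat G_{\mu^n_t}(\widehat w^n_{fp})|^2+|G_{\mu_t}(w_{fp})|^2$ directly via the $1/|z-\lambda|$ structure of Cauchy transforms, which yields $O(1/\kappa)+O(\mu_0(\{|\lambda|>\kappa/4\}))$ after integration (this is where \eqref{H2} is used). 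Choosing $\kappa=n$ balances the two pieces at $O(1/n)$. This truncation idea is the missing ingredient in your argument.
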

In \eqref{bound-variance}, the constant $C_{var}$ depends on all the parameters of the problem and may blow up when $\gamma$ tends to $2\sqrt{t}$. Theorem~\ref{variance} is proved in Section~\ref{proof:variance}. The main point will be to obtain the optimal $n$ factor appearing at the denominator. The term $e^{\frac{2\gamma}{h}}$ appearing at the numerator is classical in our setting. Note that Assumption~\eqref{H2} is very mild and is satisfied by most classical distributions.\\

Now, using similar computations to those in \cite{lacourCRAS}, we can obtain from Proposition \ref{Biais} and Theorem \ref{variance} the rates of convergence of our estimator $\widehat p_{0,h}$. We indeed showed that:
\begin{equation} \label{bornerisque}
	 MISE:=\E\Big[\left\| \widehat{p}_{0,h} - p_0\right\|^{2}\Big]   \leq   C_{B}^2L  e^{-2ah^{-r}}  +  \frac{C_{var}. e^{\frac{2\gamma}{h}}}{n}.
\end{equation}
Minimizing in $h$ the right hand side of (\ref{bornerisque}) provides the convergence rate of the estimator $\widehat{p}_{0,h}$.
The rates of convergence are summed up in the following corollary, adapted from the computation of \cite{lacourCRAS}. One can see that  there are three cases to consider to derive rates of convergence: $r=1$, $r<1$ and  $r>1$, depending on which the bias or variance term dominates the other. For the sake of completeness  Corollary \ref{thm:MISE} is proved in Appendix.

\begin{corollary}\label{thm:MISE}
Suppose that $\mu_0$ satisfies Assumption~\eqref{H2} and the density $p_0$ belongs to the space $\mathcal{S}(a,r,L)$ for $a>0$, $r>0$ and $L>0$.
Then, for any $\gamma > 2\sqrt t$ and by choosing the bandwidth $h$ according to equation \eqref{tradeoff}, we have:
\begin{equation}
\E\Big[\|\widehat{p}_{0,h}-p_0\|^2\big] = \begin{cases}
O\big(n^{-\frac{a}{a+\gamma}}\big) & \mbox{ if } r=1\\
O\Big(\exp\Big\{-\frac{2a}{(2\gamma)^{r}}\Big[\log n+(r-1)\log \log n+\sum_{i=0}^k b_i^*(\log n)^{r+i(r-1)}\Big]^r\Big\}\Big) & \mbox{ if } r<1\\
O\Big(\frac{1}{n}\exp\Big\{\frac{2\gamma}{(2a)^{1/r}} \Big[\log n+\frac{r-1}{r}\log \log n+\sum_{i=0}^k d_i^* (\log n)^{\frac{1}{r}-i\frac{r-1}{r}}\Big]^{1/r}\Big\}\Big) & \mbox{ if } r>1,
\end{cases}
\end{equation}
where the integer $k$ is such that
\[\frac{k}{k+1}<\min\big(r,\frac{1}{r}\big)\leq \frac{k+1}{k+2},\]
and where the constants $b_i^*$ and $d_i^*$ solve the following triangular system:
\begin{eqnarray*}
b^*_0 & =-\frac{2a}{(2\gamma)^{r}},\qquad \forall i>0,\  b^*_i= & -\frac{2a}{(2\gamma)^{r}}\sum_{j=0}^{i-1} \frac{r(r-1)\cdots (r-j)}{(j+1)!}  \sum_{p_0+\cdots p_j=i-j-1} b^*_{p_0}\cdots b^*_{p_{j}},\\
d^*_0 & =-\frac{2\gamma}{(2a)^{1/r}},\qquad \forall i>0,\ d_i^*= & -\frac{2\gamma}{(2a)^{1/r}} \sum_{j =0}^{i-1} \frac{\frac{1}{r}\big(\frac{1}{r}-1\big)\cdots \big(\frac{1}{r}-j\big)}{(j+1)!} \sum_{p_0+\cdots p_j=i-j-1}  d^*_{p_0}\cdots d^*_{p_j}\end{eqnarray*}
\end{corollary}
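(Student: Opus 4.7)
The plan is to start from the bias--variance decomposition \eqref{decomp:MISE} and combine the two ingredients already established: Proposition~\ref{Biais} for the bias and Theorem~\ref{variance} for the variance. Taking expectation and using Parseval, we obtain
\begin{equation*}
\E\big[\|\widehat p_{0,h}-p_0\|^2\big]\;\leq\; C_B^2 L\, e^{-2ah^{-r}} + \frac{C_{var}\,e^{2\gamma/h}}{n},
\end{equation*}
which is exactly \eqref{bornerisque}. All the work reduces to minimising this upper bound over $h>0$ (no further probabilistic argument is needed). Writing $u=1/h$, the first-order optimality condition is, up to constants, the transcendental equation
\begin{equation}\label{tradeoff}
2\gamma\, u + 2a\, u^{r}\;=\;\log n.
\end{equation}

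The case $r=1$ is explicit: \eqref{tradeoff} becomes $u=\log n / (2(a+\gamma))$, whence both terms of \eqref{bornerisque} are of order $n^{-a/(a+\gamma)}$. The two remaining cases require an asymptotic inversion of \eqref{tradeoff}, and this will be the only delicate step. When $r<1$, the linear term dominates as $u\to\infty$, so I would start from the ansatz $u=\log n/(2\gamma)+\varepsilon$, plug it into \eqref{tradeoff}, and iterate: each iteration contributes a term of the form $(\log n)^{r+i(r-1)}$, exactly matching the exponents appearing in the statement. The coefficients $b_i^{*}$ come out of this iteration as a triangular system because at step $j+1$ one must Taylor-expand $(x_0+x_1+\cdots+x_j)^{r}$ around $x_0=\log n/(2\gamma)$, which produces precisely the convolution sum $\sum_{p_0+\cdots+p_j=i-j-1} b^{*}_{p_0}\cdots b^{*}_{p_j}$ and the multinomial factor $r(r-1)\cdots(r-j)/(j+1)!$. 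Substituting this expansion into the dominant bias term $\exp(-2a u^{r})$ gives the stated rate. The case $r>1$ is symmetric: the $u^r$ term now dominates \eqref{tradeoff}, one inverts starting from $u_0=(\log n/(2a))^{1/r}$, and the same iterative scheme yields coefficients $d_i^{*}$ with $1/r$ replacing $r$ and the dominant term becomes the variance $e^{2\gamma u}/n$.

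The truncation index $k$ is chosen so that the next neglected term $(\log n)^{r+(k+1)(r-1)}$ (resp.\ $(\log n)^{1/r-(k+1)(r-1)/r}$) has exponent at most zero, i.e.\ is $o(1)$; the condition $k/(k+1)<\min(r,1/r)\leq (k+1)/(k+2)$ is exactly the arithmetic translation of that requirement. Once this is established, a short verification shows that with the bandwidth $h$ defined by equation \eqref{tradeoff} both terms of \eqref{bornerisque} are of the same order, equal to the stated rate, which finishes the proof.

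The only genuine difficulty is therefore algebraic, namely producing and book-keeping the iterative inversion of \eqref{tradeoff}, and checking that the residual after $k$ iterations is indeed negligible compared to the leading exponential. Following \cite{lacourCRAS}, I would organise this as a self-contained lemma on asymptotic inversion of the equation $\gamma u + a u^r = L$ as $L\to\infty$, and then deduce the three cases of the corollary by plugging the expansion of $u$ into either $e^{-2a u^r}$ (when $r\leq 1$) or $e^{2\gamma u}/n$ (when $r>1$).
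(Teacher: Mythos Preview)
Your overall strategy is the same as the paper's: start from \eqref{bornerisque} and perform the asymptotic inversion of the first-order condition following Lacour. There is, however, a concrete gap in the optimality condition you wrote down. Differentiating the right-hand side of \eqref{bornerisque} and setting $u=1/h$, the equation to solve is not $2\gamma u+2au^{r}=\log n$ but rather
\[
2\gamma u+2au^{r}+(r-1)\log u \;=\;\log n+O(1),
\]
which is the logarithm of the paper's $\psi(h)=e^{2\gamma/h+2a/h^{r}}h^{r-1}=O(n)$. The additional term $(r-1)\log u$ is not an $O(1)$ constant: it is of order $(r-1)\log\log n$ after the first iteration, and it is precisely the source of the $(r-1)\log\log n$ (respectively $\tfrac{r-1}{r}\log\log n$) correction that appears in the statement of the corollary. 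If you run your iteration on the truncated equation $2\gamma u+2au^{r}=\log n$, you will recover the powers $(\log n)^{r+i(r-1)}$ and the convolution structure of the $b_i^{*}$, but you will never produce the $\log\log n$ term, so the resulting bandwidth will not match \eqref{tradeoff} and the claimed rate cannot be obtained.

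A second, smaller imprecision: at the optimal $h_{*}$ the two terms of \eqref{bornerisque} are \emph{not} of the same order when $r\neq 1$; they differ by the factor $h_{*}^{1-r}$, which is a power of $\log n$. This is why the paper identifies the bias as dominant for $r<1$ and the variance as dominant for $r>1$, and states the rate through the dominant term only. Your sketch should reflect this asymmetry rather than asserting equality of the two contributions.
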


 \begin{remark}\label{remarque-fenetre}
For $r=1$, the choice $h=2(a+\gamma)/\log(n)$ yields the rate of convergence. The expressions of the optimal bandwidths for $r>1$ and $r<1$ are much more intricate (see \eqref{eq:hstar1} and \eqref{eq:hstar2} in Appendix, and also \cite{lacourCRAS}).
\end{remark}

Recall that we have transformed the free deconvolution of the Fokker-Planck equation associated with observation of the matrix $X^n(t)$ into the deconvolution problem expressed in (\ref{eq:densiteconvolee}).  To solve the latter, we have then inverted the convolution operator characterized by the Fourier transform of the Cauchy distribution $\mathcal{C}_\gamma$. The parameter $\gamma$ represents the difficulty of our deconvolution problem and consequently, the rates of convergence heavily depend on $\gamma$. The larger $\gamma$ the harder the problem, as can be observed in rates of convergences of  Corollary~\ref{thm:MISE}. This is not surprising: as $t$ grows, it becomes naturally harder to reconstruct the initial condition from the observations at time $t$ and as  $\gamma$ has to be chosen larger than $2\sqrt t,$  $\gamma$ and therefore the difficulty of the deconvolution problem grows with $t$ accordingly. It remains an open question if we can take $\gamma$ smaller. For a given $\gamma$, the upper bound of the variance term given by Theorem~\ref{variance} is optimal. Analogously, the bound for the bias given by Proposition \ref{Biais} is also optimal.  In consequence, rates of convergence for $r=1$ and $r<1$ in Corollary \ref{thm:MISE} are optimal (as proved by Tsybakov in \cite{TsyCRAS} for the case $r=1$ and by Butucea and Tsybakov in \cite{ButuceaTsybakov} for $r<1$). The optimality for $r>1$ remains an open problem.

\subsection{Proof of Theorem~\ref{variance}}\label{proof:variance}
By the definition of $\widehat{p}_{0,h}^\star$, we have:
\begin{align*}
\Sigma&:=\norm{ \widehat{p}_{0,h}^{\star} - K^{\star}_{h} p_0^{\star} }^{2} \\
&= \int_{\R} \dfrac{1}{\pi^2 t^2} e^{2\gamma |\xi| } . \left| K_{h}^{\star}(\xi) \right|^{2} . \left| \left[\left(\Im \big( \widehat{w}^{n}_{fp} (\centerdot+i\gamma )\big) \right)^{\star}  - \left( \Im \big(w_{fp} (\centerdot + i\gamma )\big) \right)^{\star} \right] (\xi)	 \right|^{2} \dd \xi  . \label{eq:MISE.mainTerm1.1}
\end{align*}
Recall that by Lemma \ref{lem:Gmu0}, we have $\Im \left( w_{fp}(z) \right) = t.\Im \big( G_{\mu_t}\big(w_{fp}(z)\big) \big)+ \Im (z)$, and similarly by Theorem-Definition~\ref{th:defestimator}, $\Im \big( \widehat{w}^{n}_{fp}(z) \big) = t.\Im \big( \widehat{G}_{\mu^{n}_t}\big(\widehat{w}^{n}_{fp}(z)\big)\big) + \Im (z)$ for $z \in \C_{2 \sqrt{t}}$.
Since $K^\star_h(\xi)=K^\star(h\xi)$, we have
\begin{align*}
\Sigma &= \int_{\R} e^{2\gamma  |\xi| }. \left| K_{h}^{\star}(\xi) \right|^2. \dfrac{1}{\pi^{2}} \Big| \Big( \Im  \widehat{G}_{\mu^{n}_{t}}\big(\widehat{w}^{n}_{fp}(\centerdot + i\gamma )\big) - \Im G_{\mu_{t}}\big( w_{fp}(\centerdot +  i\gamma )\big) \Big)^{\star}(\xi) \Big|^{2} \dd \xi
\nonumber
\\
& \leq  e^{ \frac{2\gamma}{h}} . \dfrac{C_K^2}{\pi^{2}}. \left\| \Big( \Im  \widehat{G}_{\mu^{n}_{t}}\big(\widehat{w}^{n}_{fp}(\centerdot + i \gamma )\big) - \Im G_{\mu_{t}}\big( w_{fp}(\centerdot + i \gamma )\big) \Big)^{\star} \right\|^{2}
\\
&=  \frac{2C_K^2}{\pi}.   e^{ \frac{2\gamma }{h}}  . \left\| \Im  \widehat{G}_{\mu^{n}_{t}}\big(\widehat{w}^{n}_{fp}(\centerdot +  i\gamma )\big) - \Im G_{\mu_{t}}\big( w_{fp}(\centerdot + i\gamma )\big) \right\|^{2},
\end{align*}by Parseval's equality.
\noindent
Taking the expectation, and introducing a constant $\kappa>0$ chosen later (depending on $n$), we have
\begin{equation}\label{eq:Sigma}
	\mathbb{E}(\Sigma)  \leq \frac{2C_K^2}{\pi}.   e^{ \frac{2\gamma }{h}}  . \big(I^\kappa + J^\kappa)
\end{equation}
where
\begin{align}
I^\kappa=  & \int_{ \left\{ x \in \R : |x| \leq \kappa \right\} } \mathbb{E} \Big[  \Big| \Im  \widehat{G}_{\mu^{n}_{t}}\big(\widehat{w}^{n}_{fp}(x + i\gamma )\big) - \Im G_{\mu_{t}}\big( w_{fp}(x + i\gamma )\big) \Big|^{2}  \Big] \dd x \\
J^\kappa=  & \int_{ \left\{ x \in \R : |x| > \kappa \right\} } \mathbb{E} \Big[  \Big| \Im  \widehat{G}_{\mu^{n}_{t}}\big(\widehat{w}^{n}_{fp}(x + i\gamma )\big) - \Im G_{\mu_{t}}\big( w_{fp}(x + i\gamma )\big) \Big|^{2}  \Big] \dd x.
\end{align}

To obtain the announced rates of convergence for the MISE, we need to be very careful in establishing the upper bounds for $I^\kappa$ and $J^\kappa$. For this purpose, we recall Lemma 4.3.17 of \cite{AGZ}, with a null initial condition, which will be useful in the sequel:
\begin{lem}\label{lem.lambda_j.bounded}
Let  $(\eta^n_1(t), \ldots, \eta^n_n(t))$ be the eigenvalues of $H^n(t).$
With large probability, all the eigenvalues $(\eta^n_j(t))$ of $H^n(t)$ belong to a ball of radius $M>0$ independent of $n$ and $t$. Introduce
\begin{equation}
A_{M}^{n,t} := \left\{ \forall 1 \leq j \leq n : \left|\eta^n_{j}(t)\right| \leq M  \right\} .
\end{equation}
There exist two positive constants $C_{eig}$ and $D_{eig}$ depending on $t$ such that for any $M>D_{eig}$ and any $n\in{\mathbb N}^*$
\begin{equation}
\mathbb{P}\left((A_{M}^{n,t})^c\right) = \mathbb{P}\left(\left\{ \eta^n_*(t) > M\right\}\right) \le    e^{- n.C_{eig}.M},
 \end{equation}
with $\eta_*^n(t):= \max_{i=1, \ldots,n} |\eta^n_i(t)|$.
\end{lem}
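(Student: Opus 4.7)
The plan is to recognize $H^n(t)$ at fixed $t>0$ as a rescaled Gaussian Unitary Ensemble (GUE) and apply the standard sub-Gaussian concentration for the operator norm. The statement is itself Lemma~4.3.17 of \cite{AGZ}; what follows is a sketch of the argument.

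First, by Definition~\ref{def:Hermit.BrowMotion}, at a fixed time $t>0$ the entries of $H^n(t)$ are jointly centered Gaussians, with diagonal entries of variance $t/n$ and off-diagonal entries whose real and imaginary parts have variance $t/(2n)$. Thus, in distribution, $H^n(t)=\sqrt{t}\,M_n$ where $M_n$ is a standard $n\times n$ GUE matrix (entries of variance $1/n$ on the diagonal and $1/(2n)$ off-diagonal), and $\eta_*^n(t)=\sqrt{t}\,\|M_n\|_{\mathrm{op}}$. It therefore suffices to establish a sub-Gaussian tail bound for $\|M_n\|_{\mathrm{op}}$.

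Second, I would use an $\varepsilon$-net argument. Fix a $1/4$-net $\mathcal{N}$ of the complex unit sphere $S^{2n-1}\subset\C^n$ with $|\mathcal{N}|\le 9^{2n}$. A standard reduction gives
\[\|M_n\|_{\mathrm{op}}\le 2\sup_{v\in\mathcal{N}}\bigl|\langle v, M_n v\rangle\bigr|.\]
For each fixed unit vector $v$, the quadratic form $\langle v, M_n v\rangle$ is a centered real Gaussian with variance bounded by $1/n$, so $\mathbb{P}(|\langle v, M_n v\rangle|>s)\le 2e^{-ns^2/2}$. A union bound over $\mathcal{N}$ yields
\[\mathbb{P}\bigl(\|M_n\|_{\mathrm{op}}>x\bigr)\le 2\cdot 9^{2n}\exp\!\bigl(-nx^2/8\bigr).\]

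Third, translating back to $H^n(t)$,
\[\mathbb{P}\bigl(\eta_*^n(t)>M\bigr)=\mathbb{P}\bigl(\|M_n\|_{\mathrm{op}}>M/\sqrt{t}\bigr)\le 2\exp\!\bigl(n\bigl(2\log 9-M^2/(8t)\bigr)\bigr).\]
For $M>D_{\mathrm{eig}}$ with $D_{\mathrm{eig}}$ a suitable $t$-dependent threshold (chosen so that $M^2/(8t)-2\log 9\ge C_{\mathrm{eig}}M$ for some constant $C_{\mathrm{eig}}>0$ depending on $t$), the right-hand side is bounded by $e^{-nC_{\mathrm{eig}}M}$, which is precisely the announced estimate.

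The only mildly delicate point is bookkeeping the $t$-dependence of $C_{\mathrm{eig}}$ and $D_{\mathrm{eig}}$; conceptually, the argument is nothing more than Gaussian concentration combined with a net. An alternative route, bypassing the net, is to apply the Borell--TIS inequality to the supremum of the centered Gaussian process $v\mapsto\langle v, H^n(t) v\rangle$ over the unit sphere, which produces the same type of bound with cleaner constants.
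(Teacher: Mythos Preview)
Your argument is correct. The paper does not actually prove this lemma: it simply invokes it as Lemma~4.3.17 of \cite{AGZ} (stated here for the case of null initial condition), so there is no ``paper's own proof'' to compare against beyond that citation. Your $\varepsilon$-net plus Gaussian concentration route is a standard, self-contained way to obtain the bound; the only cosmetic point is that the leading factor $2$ in your final estimate needs to be absorbed into the exponent (e.g.\ by slightly enlarging $D_{eig}$ or shrinking $C_{eig}$), which is harmless bookkeeping. The approach in \cite{AGZ} at that spot proceeds via a uniform-in-time control along the Dyson SDE rather than a fixed-time net argument, but since only the fixed-time statement is used here, your more elementary random-matrix proof is entirely adequate and arguably more transparent for the present purpose.
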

Using this lemma, we can control the tail distribution of $\mathbb{E}[\mu^{n}_{t}]$, which is essential to establish very precise estimates without which the announced rate would not be derived. We recall that $\lambda_1^n(t) \le \ldots \le \lambda_n^n(t)$ are the eigenvalues of $X^n(t) = X^n(0) + H^n(t)$ in increasing order. By Weyl's interlacing inequalities, we have that, for $1 \le j \le n,$
\begin{equation}\label{interlacing}
\lambda_j^n(0) - \eta_*^n(t) \le \lambda_j^n(t) \le \lambda_j^n(0) + \eta_*^n(t).
\end{equation}
Therefore, for $1 \le j \le n,$
\[ \mathbb{E} \left[ \mu^{n}_{t}\left( \left\{|\lambda| > \frac{\kappa}{2}\right\}\right) \right]\le  \mathbb{E} \left[ \mu^{n}_{0}\left( \left\{|\lambda| > \frac{\kappa}{4}\right\}\right) \right]
 + n \mathbb{P}\left(\left\{ \eta^n_*(t) > \frac{\kappa}{4}\right\}\right) \le  \mathbb{E} \left[ \mu^{n}_{0}\left( \left\{|\lambda| > \frac{\kappa}{4}\right\}\right) \right]
 + n   e^{- \frac{n.C_{eig}.\kappa}{4}}.\]
Recall that after Equation \eqref{eq:Xn}, we introduced the notation $d_1^n, \ldots, d_n^n$ for the i.i.d. random variables of distribution $\mu_0$ and whose order statistic constitutes the diagonal elements of $X_n(0)$, $\lambda_1^n(0) < \ldots < \lambda_n^n(0)$. We have
\[ \mathbb{E} \left[ \mu^{n}_{0}\left( \left\{|\lambda| > \frac{\kappa}{4}\right\}\right) \right] = \frac{1}{n} \sum_{i=1}^n \mathbb{P}\left( |d_i^n| > \frac{\kappa}{4}\right)
= \mu_{0}\left(\left\{|\lambda | > \frac{\kappa}{4}\right\}\right),
\]
so that we finally get
\begin{equation}
\label{eq:lemmekato}
 \mathbb{E} \left[ \mu^{n}_{t}\left( \left\{|\lambda| > \frac{\kappa}{2}\right\}\right) \right]\le \mu_{0}\left(\left\{|\lambda | > \frac{\kappa}{4}\right\}\right)
 + n   e^{- \frac{n.C_{eig}.\kappa}{4}}.
\end{equation}
Now, we successively study $I^\kappa$ and $J^\kappa$.

\subsubsection{Upper bound  for $I^\kappa$}

\begin{lem}\label{lem:Ikappa}
There exist constants $C_I^2$, $C_I^2$ and $C_I^3$ (that can depend on $\gamma$ and $t$) such that:
\begin{equation}
I^\kappa\leq \frac{C^1_I}{n}+\frac{\kappa C_I^2}{n^2} + C_I^3 \kappa e^{-n.C_{eig}.M}.
\end{equation}
\end{lem}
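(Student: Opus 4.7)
The plan is to leverage two key identities. First, by the fixed-point equations from Lemma \ref{lem:Gmu0} and Theorem-Definition \ref{th:defestimator}, we have $\Im \widehat{G}_{\mu^n_t}(\widehat{w}^n_{fp}(z)) - \Im G_{\mu_t}(w_{fp}(z)) = t^{-1}(\Im \widehat{w}^n_{fp}(z) - \Im w_{fp}(z))$. Second, the contraction estimate \eqref{proof.consistency.w^n_fp.upperbound.by.Cauchytransform} bounds $|\widehat{w}^n_{fp}(z) - w_{fp}(z)|$ by a multiple of $|\widehat{G}_{\mu^n_t}(w_{fp}(z)) - G_{\mu_t}(w_{fp}(z))|$. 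Together these reduce $I^\kappa$, up to a multiplicative constant depending only on $\gamma$ and $t$, to the integral over $\{|x|\leq\kappa\}$ of $\mathbb{E}|\widehat{G}_{\mu^n_t}(w_{fp}(x+i\gamma)) - G_{\mu_t}(w_{fp}(x+i\gamma))|^2$, which I would decompose via \eqref{decomp:G} as $A_1^n + A_2^n + A_3^n$ evaluated at $w_{fp}(x+i\gamma)$. Since $\Im w_{fp}(x+i\gamma) \geq \gamma/2$ by Theorem \ref{thm.Fixpoint}(i), Proposition \ref{proposition.D&F19.1st.term} yields $\mathbb{E}|A_1^n|^2 \leq C/n^2$ and Proposition \ref{prop.b_n(z).bound} gives the deterministic bound $|A_2^n|^2 \leq C/n^2$; integrating each over $\{|x|\leq\kappa\}$ produces the $C_I^2\kappa/n^2$ term.

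The heart of the argument is the refined treatment of $A_3^n$. For $z = w_{fp}(x+i\gamma)$, Lemma \ref{lem:Gmu0} gives $z - t G_{\mu_t}(z) = x + i\gamma$, so the kernel $\varphi_z(y)$ appearing in the first bound of Proposition \ref{prop.fluctuG2} collapses to $\varphi_{w_{fp}(x+i\gamma)}(y) = 1/(x + i\gamma - y)$. Consequently $|A_3^n(w_{fp}(x+i\gamma))|^2 \leq C\,|G_{\mu_0^n}(x+i\gamma) - G_{\mu_0}(x+i\gamma)|^2$, and since the $d_j^n$ are i.i.d., one has
\[\mathbb{E}\bigl|G_{\mu_0^n}(x+i\gamma) - G_{\mu_0}(x+i\gamma)\bigr|^2 \leq \frac{1}{n}\int\frac{\dd\mu_0(y)}{(x-y)^2+\gamma^2}.\]
Applying Fubini and the Poisson-kernel identity $\int_\R \dd x/((x-y)^2+\gamma^2) = \pi/\gamma$ gives a bound of order $C/n$ that is \emph{uniform in $\kappa$}; this is the origin of the $C_I^1/n$ term, and is the reason why a pointwise application of Proposition \ref{prop.fluctuG2} would be too crude.

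The residual $C_I^3\kappa\,e^{-nC_{eig}M}$ is produced by a splitting on the event $A_M^{n,t}$ from Lemma \ref{lem.lambda_j.bounded}: on its complement, of probability at most $e^{-n C_{eig} M}$, one applies crude uniform bounds (e.g.\ $|\Im \widehat{G}|, |\Im G| \leq 2/\gamma$ since $\Im w_{fp} \geq \gamma/2$) so that the integrand is $O(1)$, and integration over $\{|x|\le\kappa\}$ contributes at most $C\kappa\,e^{-nC_{eig}M}$. The main technical obstacle is precisely the collapse identity $\varphi_{w_{fp}(x+i\gamma)}(y) = 1/(x+i\gamma-y)$: without it, the pointwise bound $\mathbb{E}|A_3^n(z)|^2 \leq C/n$ of Proposition \ref{prop.fluctuG2} would only produce a $C\kappa/n$ contribution after integration over $\{|x|\leq\kappa\}$, destroying the $\kappa$-independent leading term and hence spoiling the variance rate that drives Theorem \ref{variance} and Corollary \ref{thm:MISE}.
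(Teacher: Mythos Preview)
Your approach is correct and in fact cleaner than the paper's. The paper decomposes $\widehat G_{\mu^n_t}(\widehat w^n_{fp}) - G_{\mu_t}(w_{fp})$ directly as
\[
\bigl[\widehat G(\widehat w^n_{fp}) - \widehat G(w_{fp})\bigr] + \bigl[\widehat G(w_{fp}) - \E(\widehat G(w_{fp})\mid X^n(0))\bigr] + \bigl[\E(\widehat G(w_{fp})\mid X^n(0)) - G(w_{fp})\bigr],
\]
and the first piece (called $I^\kappa_1$) is what forces the paper into the $A_M^{n,t}$ splitting, a delicate Cauchy--Schwarz/Rosenthal argument, and the technical Lemma~\ref{lem:xleqkappa}. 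By first passing through the fixed-point identity and then the contraction estimate \eqref{proof.consistency.w^n_fp.upperbound.by.Cauchytransform}, you eliminate $I^\kappa_1$ entirely and land directly on $\int_{|x|\le\kappa}\E\bigl|\widehat G_{\mu^n_t}(w_{fp}) - G_{\mu_t}(w_{fp})\bigr|^2\dd x$. Your collapse identity $\varphi_{w_{fp}(x+i\gamma)}(y)=1/(x+i\gamma-y)$ is exactly right---it is precisely Lemma~\ref{lem:Gmu0} rewritten as $w_{fp}(z)-tG_{\mu_t}(w_{fp}(z))=z$---and the paper does not exploit it: in its treatment of $I^\kappa_3$ it instead bounds $|w_{fp}(x+i\gamma) - tG_{\mu_t}(w_{fp}(x+i\gamma)) - v|$ from below via \eqref{et17} and then invokes Lemma~\ref{lem:xleqkappa}, whereas your Poisson-kernel computation $\int_\R\dd x/((x-y)^2+\gamma^2)=\pi/\gamma$ is both simpler and exact.

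One small point: in your route the $A_M^{n,t}$ splitting is superfluous. Your refined analysis of $A_1^n$, $A_2^n$, $A_3^n$ at the point $w_{fp}(x+i\gamma)$ never uses boundedness of the eigenvalues $\eta_j^n(t)$, so you may simply take $C_I^3=0$ and obtain the stronger bound $I^\kappa \le C_I^1/n + C_I^2\kappa/n^2$. The exponential term in the lemma is an artifact of the paper's handling of $I^\kappa_1$, which you have bypassed.
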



Before proving Lemma \ref{lem:Ikappa}, let us establish a result that will be useful in the sequel.
\begin{lem}\label{lem:xleqkappa}
Let us consider $\gamma>2\sqrt{t}$, $p>1$ and $M>0$. Then, we have
\begin{equation}
\mathfrak{I}_{p,\gamma,M,t} := \int_0^{+\infty}\int\dfrac{ 1 }{\left[
\left\{\Big|\big| \lambda\big|- x\Big|-\sqrt{t}-M\right\}\vee\frac{\gamma}{2} \right]^p}\dd\mu_0(\lambda)\dd x\leq C(p,\gamma,M,t),
\label{et11}
\end{equation}
for $ C(p,\gamma,M,t)$ a finite constant only depending on $p,$ $\gamma,$ $M$ and $t$.
\end{lem}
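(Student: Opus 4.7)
The plan is to split the outer integral over $x$ into the region where the $\gamma/2$ lower bound is active and the complementary region where the true expression $\big||\lambda|-x\big|-\sqrt{t}-M$ dominates, then exchange the order of integration and use that $\mu_0$ is a probability measure.

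First, I would swap the order of integration via Tonelli (the integrand is nonnegative) and study, for a fixed $\lambda \in \R$, the one-dimensional integral
\[\mathcal{J}(\lambda):=\int_0^{+\infty} \frac{1}{\Big[\{\big||\lambda|-x\big|-\sqrt{t}-M\}\vee \frac{\gamma}{2}\Big]^p}\,\dd x.\]
I would split this into two subregions depending on whether $\big||\lambda|-x\big|\leq \sqrt{t}+M+\frac{\gamma}{2}$ or not. On the first subregion, the denominator equals $\frac{\gamma}{2}$, and the subregion is contained in an $x$-interval of length at most $2(\sqrt{t}+M+\frac{\gamma}{2})$, so its contribution is bounded by $(\gamma/2)^{-p}\cdot 2(\sqrt{t}+M+\frac{\gamma}{2})$ uniformly in $\lambda$.

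On the second subregion, the denominator is $\big||\lambda|-x\big|-\sqrt{t}-M\geq \frac{\gamma}{2}$, and the region splits into $x\geq |\lambda|+\sqrt{t}+M+\frac{\gamma}{2}$ and (if nonempty) $0\leq x\leq |\lambda|-\sqrt{t}-M-\frac{\gamma}{2}$. In both pieces, the substitution $u=\big||\lambda|-x\big|-\sqrt{t}-M$ reduces the integral to a tail of $\int_{\gamma/2}^{+\infty} u^{-p}\,\dd u$, which is finite precisely because $p>1$ and is bounded by $\frac{1}{(p-1)(\gamma/2)^{p-1}}$, again uniformly in $\lambda$.

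Combining the two bounds, $\mathcal{J}(\lambda)$ is dominated by a constant $C(p,\gamma,M,t)$ independent of $\lambda$; integrating against the probability measure $\mu_0$ then yields the claimed inequality. The main obstacle is purely bookkeeping: ensuring the splits remain contained in $[0,+\infty)$ and that the uniform-in-$\lambda$ constants are indeed independent of the possibly unbounded support of $\mu_0$. Since $\mu_0$ only enters through $\int \dd\mu_0(\lambda)=1$, no moment condition on $\mu_0$ is needed.
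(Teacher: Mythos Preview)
Your proposal is correct and follows essentially the same approach as the paper: after swapping the order of integration, the paper likewise splits the $x$-integral for fixed $\lambda$ into the interval where the $\gamma/2$ floor is active (contributing a term of size $(\gamma/2)^{-p}(2\sqrt{t}+2M+\gamma)$) and the two tails where the substitution reduces matters to $\int_{\gamma/2}^{\infty} v^{-p}\,\dd v$, finite since $p>1$. The bookkeeping you mention (handling the intersection with $[0,+\infty)$) is the only subtlety and is harmless, exactly as you indicate.
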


\begin{proof}
The supremum in the denominator equals to $\big|  |\lambda|- x\big|-\sqrt{t}-M $ when $x<|\lambda|-\sqrt{t}-M-\gamma/2$ (which is possible only if $|\lambda|-\sqrt{t}-M-\gamma/2$ is positive) or $x>|\lambda|+\sqrt{t}+M+\gamma/2$. Otherwise the supremum is $\gamma/2$. Hence
\begin{align}
\mathfrak{I}_{p,\gamma,M,t} \leq &  \int_\R \left\{ \int_0^{\big(|\lambda|-\sqrt{t}-M-\frac{\gamma}{2}\big) \vee 0} \frac{1}{\Big(|\lambda|-x-\sqrt{t}-M\Big)^p} \dd x + \int^{|\lambda|+\sqrt{t}+M+\frac{\gamma}{2}}_{\{|\lambda|-\sqrt{t}-M-\frac{\gamma}{2}\}\vee 0}\dfrac{2^p}{\gamma^p}\dd x \right. \nonumber\\
 & \hspace{4cm} \left. +\int_{|\lambda|+\sqrt{t}+M+\frac{\gamma}{2}}^{+\infty}\dfrac{ 1 }{\left[x-\big| \lambda\big|-\sqrt{t}-M \right]^p}\dd x \right\} \dd \mu_0(\lambda)\nonumber\\
 \leq &  \int_\R \left\{ \int^{(|\lambda|-\sqrt{t}-M)\vee \frac{\gamma}{2}}_{\frac{\gamma}{2}}\dfrac{ 1}{v^p}\dd v
 +\frac{2^{p}(2\sqrt{t}+2M+\gamma)}{\gamma^p}+ \int_{\frac{\gamma}{2}}^{+\infty} \dfrac{ 1}{v^p}\dd v  \right\} \dd \mu_0(\lambda) \nonumber\\
 \leq & C(p,\gamma,M,t)<+\infty.
\end{align}This concludes the proof.
\end{proof}

\begin{proof}[Proof of Lemma \ref{lem:Ikappa}]
We decompose $I^\kappa$ into three parts, $I^\kappa \leq 2 \big(I^\kappa_1+I^\kappa_2+I^\kappa_3\big)$ where:
\begin{align*}
I^\kappa_{1} &:=  \int_{ \left\{ |x| \leq \kappa \right\} } \mathbb{E} \Big[  \Big|   \widehat{G}_{\mu^{n}_{t}}\big(\widehat{w}^{n}_{fp}(x + i\gamma )\big) -  \widehat{G}_{\mu^{n}_{t}}\big( w_{fp}(x + i\gamma )\big) \Big|^{2}  \Big] \dd x ,\\
I^\kappa_{2} &:=  \int_{ \left\{ |x| \leq \kappa \right\} } \mathbb{E} \Big[  \Big|   \widehat{G}_{\mu^{n}_{t}}\big(w_{fp}(x + i\gamma )\big) -  \mathbb{E} \Big[\widehat{G}_{\mu^{n}_{t}}\big( w_{fp}(x + i\gamma )\big) \ |\ X^n(0)\Big ] \Big|^{2}  \Big] dx,
\\
I^\kappa_{3} &:= \int_{ \left\{ |x| \leq \kappa \right\} } \E\Big[ \Big| \mathbb{E} \Big[\widehat{G}_{\mu^{n}_{t}}\big( w_{fp}(x +  i\gamma )\big) \ |\ X^n(0)\Big] - G_{\mu_{t}}\big( w_{fp}(x + i\gamma )\big) \Big|^{2} \Big] \dd x .
\end{align*}

\noindent \textbf{Step 1:} Let us first upper bound $I^\kappa_{1}$. It is relatively easy to bound $I^\kappa_1$ by an upper bound in $C(\gamma,t) \kappa /n$, but this will not yield in the end the announced convergence rate. To establish more precise upper bounds, we use the event $A_M^{n,t}$ defined in Lemma \ref{lem.lambda_j.bounded}. We have $I^\kappa_{1}= I^\kappa_{11}+I^\kappa_{12}$
with
\begin{align*}
I^\kappa_{11} &:= \int_{ \left\{ |x| \leq \kappa \right\} } \mathbb{E} \bigg[ \Big|   \widehat{G}_{\mu^{n}_{t}}\big(\widehat{w}^{n}_{fp}(x + i\gamma )\big) -  \widehat{G}_{\mu^{n}_{t}}\big( w_{fp}(x + i\gamma )\big) \Big|^{2} 1_{A_{M}^{n,t}} \bigg] \dd x,\\
I^\kappa_{12} &:= \int_{ \left\{ |x| \leq \kappa \right\} } \mathbb{E} \bigg[ \Big|   \widehat{G}_{\mu^{n}_{t}}\big(\widehat{w}^{n}_{fp}(x + i\gamma )\big) -  \widehat{G}_{\mu^{n}_{t}}\big( w_{fp}(x + i\gamma )\big) \Big|^{2} 1_{(A_{M}^{n,t})^c} \bigg] \dd x.
\end{align*}

For the term $I^\kappa_{12}$, we have by Theorem~\ref{thm.Fixpoint}(i) and Lemma \ref{lem.lambda_j.bounded}:
\begin{equation}\label{eq:majoI12}
I^\kappa_{12} \leq \dfrac{16}{\gamma^2}\kappa\mathbb P((A_{M}^{n,t})^c) \leq \dfrac{16}{\gamma^2}\kappa   e^{- n.C_{eig}.M}.
\end{equation}

Let us now consider the term $I^\kappa_{11}$:
\begin{align*}
I^\kappa_{11} &= \int_{ \left\{ |x| \leq \kappa \right\} } \mathbb{E} \bigg[  \bigg| \dfrac{1}{n} \sum_{j=1}^{n} \dfrac{w_{fp}(x + i \gamma) - \widehat{w}^{n}_{fp}(x + i \gamma ) }{ \big( \widehat{w}^{n}_{fp}(x + i \gamma ) - \lambda_{j}^{n}(t) \big) . \big( w_{fp}(x + i \gamma ) - \lambda_{j}^{n}(t) \big) } \bigg|^{2} 1_{A_{M}^{n,t}} \bigg] \dd x
\\
&\leq \int_{ \left\{ |x| \leq \kappa \right\} } \mathbb{E} \Bigg[ \Big| \widehat{w}^{n}_{fp}(x + i \gamma) - w_{fp}(x + i \gamma ) \Big|^{2} . \dfrac{1}{n} \sum_{j=1}^{n} \dfrac{ 1_{A_{M}^{n,t}} }{ \big| \widehat{w}^{n}_{fp}(x + i \gamma ) - \lambda_{j}^{n}(t) \big|^{2} . \big| w_{fp}(x + i \gamma ) - \lambda_{j}^{n}(t) \big|^{2} }  \Bigg] \dd x
\end{align*}
by convexity. Using \eqref{estimee:wfp} and \eqref{interlacing}, we have
\begin{align*}
\big| w_{fp}(x + i \gamma ) - \lambda_{j}^{n}(t) \big|&\geq\big| w_{fp}(x + i \gamma )\big|-\big| \lambda_{j}^{n}(t)\big|\\
&\geq \big|\Re(w_{fp}(x + i \gamma ))\big|-\big| \lambda_{j}^{n}(t)\big|\\
&\geq \big|x\big|-\sqrt{t}-\big| \lambda_{j}^{n}(0)\big|- \eta_*^n(t).
\end{align*}
Since $\lambda_{j}^{n}(t)$ is real, we also have:
\begin{align*}
\big| w_{fp}(x + i \gamma ) - \lambda_{j}^{n}(t) \big|&\geq \big| \Re(w_{fp}(x + i \gamma ) - \lambda_{j}^{n}(t)) \big|\\
&\geq\big|\lambda_{j}^{n}(t) \big|-\big| \Re(w_{fp}(x + i \gamma ))\big|\\
&\geq \big| \lambda_{j}^{n}(t)\big|- \big|x\big|-\sqrt{t}\\
&\geq \big| \lambda_{j}^{n}(0)\big|- \eta_*^n(t)- \big|x\big|-\sqrt{t}.
\end{align*}
Therefore, using Theorem~\ref{thm.Fixpoint},
\begin{equation}
\big| w_{fp}(x + i \gamma ) - \lambda_{j}^{n}(t) \big|\geq\left\{\Big|\big| \lambda_{j}^{n}(0)\big|- \big|x\big|\Big|-\sqrt{t}- \eta_*^n(t)\right\}\vee\frac{\gamma}{2}.
\end{equation}
In Theorem-Definition \ref{th:defestimator}, it is shown that $\widehat{w}^n_{fp}(z)$ satisfies a similar inequality as \eqref{estimee:wfp}. Thus, we obtain with similar computations that:
\begin{equation}
\big| \widehat{w}^{n}_{fp}(x + i \gamma ) - \lambda_{j}^{n}(t) \big|\big|\geq\left\{\Big|\big| \lambda_{j}^{n}(0)\big|- \big|x\big|\Big|-\sqrt{t}- \eta_*^n(t)\right\}\vee\frac{\gamma}{2}.
\end{equation}
Then, using the definition of $A_M^{n,t}$, there exists a constant $C_{11}(\gamma,t)$ only depending on $\gamma$ and $t$ such that
\begin{align}
I^\kappa_{11}
&\leq \int_{ \left\{ |x| \leq \kappa \right\} } \mathbb{E} \Bigg[ \Big| \widehat{w}^{n}_{fp}(x + i \gamma) - w_{fp}(x + i \gamma ) \Big|^{2} . \dfrac{1}{n} \sum_{j=1}^{n} \dfrac{ 1 }{\left[
\left\{\Big|\big| \lambda_{j}^{n}(0)\big|- \big|x\big|\Big|-\sqrt{t}-M\right\}\vee\frac{\gamma}{2} \right]^4}  1_{A_{M}^{n,t}} \Bigg] \dd x\nonumber\\
&\leq \dfrac{1}{n}\sum_{j=1}^{n}  \int_{ \left\{ |x| \leq \kappa \right\} } \mathbb{E} \Bigg[ \dfrac{ 1 }{\left[
\left\{\Big|\big| \lambda_{j}^{n}(0)\big|- \big|x\big|\Big|-\sqrt{t}-M\right\}\vee\frac{\gamma}{2} \right]^4} \mathbb{E} \big[ \big| \widehat{w}^{n}_{fp}(x + i \gamma) - w_{fp}(x + i \gamma ) \big|^{2} |  X^n(0) \big] \Bigg] \dd x\nonumber\\
& \leq \frac{C_{11}(\gamma,t)}{n} \sum_{j=1}^{n} \int_{ \left\{ |x| \leq \kappa \right\} } \mathbb{E} \Bigg[  \dfrac{ 1 }{\left[
\left\{\Big|\big| \lambda_{j}^{n}(0)\big|- \big|x\big|\Big|-\sqrt{t}-M\right\}\vee\frac{\gamma}{2} \right]^4} \left(\frac{1}{n^2} \right. \nonumber\\
& \hspace{5cm} \left.+  \left|\int_{\R} \dfrac{1}{x+i\gamma - t.G_{\mu_{0} \boxplus \sigma_{t} }(x+i\gamma) - \lambda} \big[ \dd \mu^{n}_{0}(\lambda) - \dd \mu_{0}(\lambda) \big] \right|^2\right) \Bigg] \dd x\nonumber\\
 & \leq \frac{C_{11}(\gamma,t)}{n} \big(I^\kappa_{111}+I^\kappa_{112}\big),\label{eq:majoI11}
\end{align}
where the third inequality comes from \eqref{eq:majoA123conditionnelle}, and where:
\begin{align*}
I^\kappa_{111}&:= \dfrac{1}{n^2}\sum_{j=1}^{n} \int_{ \left\{ |x| \leq \kappa \right\} }\mathbb{E} \Bigg[\dfrac{ 1 }{\left[
\left\{\Big|\big| \lambda_{j}^{n}(0)\big|- \big|x\big|\Big|-\sqrt{t}-M\right\}\vee\frac{\gamma}{2} \right]^4} \Bigg]\dd x\\
I^\kappa_{112} & := \int_{ \left\{ |x| \leq \kappa \right\} }\mathbb{E} \Bigg[\int \dfrac{ 1 }{\left[
\left\{\Big|\big| \lambda\big|- \big|x\big|\Big|-\sqrt{t}-M\right\}\vee\frac{\gamma}{2} \right]^4}\dd \mu^{n}_{0}(\lambda) \\
& \left| \sqrt{n}\int_{\R} \dfrac{1}{x+i\gamma - t.G_{\mu_{0} \boxplus \sigma_{t} }(x+i \gamma) - \lambda} \big[ \dd \mu^{n}_{0}(\lambda) - \dd \mu_{0}(\lambda) \big] \right|^2\Bigg]\dd x.
\end{align*}
Now we wish to upper bound $I^\kappa_{111}$ and $I^\kappa_{112} $ independently of $\kappa.$ We first deal with $ I^\kappa_{111}.$
\begin{align}
 I^\kappa_{111}
&= \frac 1 n \int_{ \left\{ |x| \leq \kappa \right\} }\int_\R \dfrac{ 1 }{\left[
\left\{\Big|\big| \lambda\big|- \big|x\big|\Big|-\sqrt{t}-M\right\}\vee\frac{\gamma}{2} \right]^4}\dd\mu_0(\lambda)\dd x\nonumber\\
&\leq \frac 2 n \int_0^{+\infty}\int\dfrac{ 1 }{\left[
\left\{\Big|\big| \lambda\big|- x\Big|-\sqrt{t}-M\right\}\vee\frac{\gamma}{2} \right]^4}\dd\mu_0(\lambda)\dd x. \label{eq:majoI111}
\end{align}The double integral is upper bounded by a constant $C(\gamma,M,t)/n$ by Lemma \ref{lem:xleqkappa}.\\

Let us now consider $I^\kappa_{112}$. Using Cauchy-Schwarz inequality, we have:
\begin{align}
I^\kappa_{112}
& \leq \sqrt{ \mathbb{E} \Bigg[ \int_{ \left\{ |x| \leq \kappa \right\} }\Big(\int \dfrac{ 1 }{\left[
\left\{\Big|\big| \lambda\big|- \big|x\big|\Big|-\sqrt{t}-M\right\}\vee\frac{\gamma}{2} \right]^4}\dd \mu^{n}_{0}(\lambda) \Big)^2 \dd x \Bigg]  }\nonumber\\
& \sqrt{  \mathbb{E} \Bigg[ \int_{ \left\{ |x| \leq \kappa \right\} }\left|\sqrt n \int_{\R} \dfrac{1}{x+i\gamma - t.G_{\mu_{0} \boxplus \sigma_{t} }(x+i \gamma) - \lambda} \big[ \dd \mu^{n}_{0}(\lambda) - \dd \mu_{0}(\lambda) \big] \right|^4\dd x \Bigg]}\label{etape13}
\end{align}
The first term can be treated exactly as $I^\kappa_{111}$ as:
\begin{multline}
 \mathbb{E} \Bigg[ \int_{ \left\{ |x| \leq \kappa \right\} }\Big(\int \dfrac{ 1 }{\left[
\left\{\Big|\big| \lambda\big|- \big|x\big|\Big|-\sqrt{t}-M\right\}\vee\frac{\gamma}{2} \right]^4}\dd \mu^{n}_{0}(\lambda) \Big)^2 \dd x \Bigg] \\
\leq \mathbb{E} \Bigg[ \int_{ \left\{ |x| \leq \kappa \right\} } \frac{1}{\left(\frac{\gamma}{2}\right)^4}\Big(\int \dfrac{ 1 }{\left[
\left\{\Big|\big| \lambda\big|- \big|x\big|\Big|-\sqrt{t}-M\right\}\vee\frac{\gamma}{2} \right]^4}\dd \mu^{n}_{0}(\lambda) \Big) \dd x \Bigg]
=\frac{16n}{\gamma^4}   I^\kappa_{111}.\label{etape14}
\end{multline}
We now focus on the second term of \eqref{etape13}. As in the proof of Proposition \ref{prop.fluctuG2}, if we denote by $\phi_x:=\varphi_{x+i \gamma} : \lambda \mapsto (x+i\gamma - t.G_{\mu_{0} \boxplus \sigma_{t} }(x+i \gamma) - \lambda)^{-1},$ the last term can be rewritten as
\begin{align*}
 I_{1121}^\kappa := & \mathbb{E} \Bigg[ \int_{ \left\{ |x| \leq \kappa \right\} }\left|\sqrt n \int_{\R} \phi_{x}(\lambda) \big[ \dd \mu^{n}_{0}(\lambda) - \dd \mu_{0}(\lambda) \big] \right|^4\dd x \Bigg] \\
  =  & n^2 \int_{ \left\{ |x| \leq \kappa \right\} } \mathbb{E} \Bigg[ \left| \frac{1}{n} \sum_{j=1}^n \left(\phi_{x}(d_j^n(0)) - \mathbb E(\phi_{x}(d_j^n(0))\right)  \right|^4\Bigg] \dd x
  \end{align*}
where we used the notation $d_1^n, \ldots, d_n^n$ for the non-ordered diagonal elements of $X_n(0)$ (introduced after Equation \eqref{eq:Xn}). Since the random variables $d_1^n, \ldots, d_n^n$ are i.i.d. with law $\mu_0,$ the random variables $(\phi_x(\lambda_j^n(0)) - \mathbb E(\phi_x(\lambda_j^n(0)) ))_{1 \le j \le n}$ are i.i.d. centered with finite fourth moment. By Rosenthal and then Cauchy-Schwarz inequality, we have
 \begin{equation}\label{et12}
 I_{1121}^\kappa  \le   \frac{C}{n^2}(n+n^2)   \int_{ \left\{ |x| \leq \kappa \right\}} \int_\R \left|\phi_x(\lambda) - \int_\R \phi_x(\lambda) \dd \mu_{0}(\lambda)  \right|^4  \dd \mu_{0}(\lambda)\dd x,  \end{equation}
 for $C$ a constant. We can conclude if the above double integral is bounded independently of $\kappa$. We would like to use Lemma \ref{lem:xleqkappa} but the fact that we have a non-centered moment here implies that we should be careful, because a constant integrated with respect to $\dd x$ on $\left\{ |x| \leq \kappa \right\}$ yields a term proportional to $\kappa$ that we should avoid.\\

First let us recall some estimates for the functions $\phi_x$. As, we know that $\Im  \big(G_{\mu_0 \boxplus \sigma_t }(x+i \gamma) \big)<0$, we have
\begin{equation}\label{et9}
| x+i\gamma - t G_{\mu_0 \boxplus \sigma_t }(x+i \gamma) - \lambda| \ge \Im ( x+i\gamma - t G_{\mu_0 \boxplus \sigma_t }(x+i \gamma) - \lambda) \ge \gamma \ge \frac{\gamma}{2}\end{equation}
and the functions $\phi_x$ are bounded by $2/\gamma$. This yields that $\big|\int_\R  \phi_x(\lambda) \dd \mu_{0}(\lambda) \big|\leq 2/\gamma$. By Lemma \ref{lem:Gmu0}, $|t  G_{\mu_0 \boxplus \sigma_t }(x+i \gamma) | \le \frac{t}{\gamma} \le \frac{\sqrt t}{2} \le \sqrt t$
so that
\begin{equation}\label{et10} | x+i\gamma - t G_{\mu_0 \boxplus \sigma_t }(x+i \gamma) - \lambda| \ge  (|x - \lambda|- \sqrt t) \geq (\big||x| - |\lambda|\big|- \sqrt t) . \end{equation}
As a consequence,
\begin{equation}
 | x+i\gamma - t G_{\mu_0 \boxplus \sigma_t }(x+i \gamma) - \lambda| \ge (\big||x| - |\lambda|\big|- \sqrt t) \vee \frac{\gamma}{2}.
\end{equation}
Using that $d_1^n$ has distribution $\mu_0$, the double integral in the right hand side of \eqref{et12} can be rewritten as:
\begin{multline*}
\int_{ \left\{ |x| \leq \kappa \right\}} \E\Big( \big|\phi_x(d_1^n) - \E\big[\phi_x(d_1^n)\big]\big|^4\Big)\dd x \\
\begin{aligned}
= & \int_{\left\{ |x| \leq \kappa \right\}}  \Big\{\E\big[ |\phi_x(d_1^n)|^4 \big] - 2 \E\big[|\phi_x(d_1^n)|^2\phi_x(d_1^n)\big]\E\big[\overline{\phi_x}(d_1^n)\big]
+ \E\big[\phi^2_x(d_1^n)\big]\E\big[\overline{\phi_x}(d_1^n)\big]^2 \\
& - 2 \E\big[|\phi_x(d_1^n)|^2\overline{\phi_x}(d_1^n) \big]\E\big[\overline{\phi_x}(d_1^n)\big]
+ 4 \E\big[|\phi_x(d_1^n)|^2\big]\big|\E\big[\phi_x(d_1^n)\big]\big|^2 + \E\big[\overline{\phi_x}(d_1^n)^2\big]\big(\E\big[\phi_x(d_1^n)\big]\big)^2
-  \big|\E\big[\phi_x(d_1^n)\big]\big|^4\Big\}\\
\leq & \mathfrak{I}_{4,\gamma,0,t}+ \frac{8}{\gamma}\mathfrak{I}_{3,\gamma,0,t}+ \frac{24}{\gamma^2} \mathfrak{I}_{2,\gamma,0,t}
\end{aligned}
\end{multline*}
by using the notation of Lemma \ref{lem:xleqkappa} and by neglecting the term $-  \big|\E\big[\phi_x(d_1^n)\big]\big|^4<0$. The Lemma \ref{lem:xleqkappa} allows us to conclude that $ I_{1121}^\kappa\leq C_{1121}(\gamma,t)<+\infty$. \\

We can now conclude the Step 1. This last result, together with \eqref{et12} implies that $I_{112}^\kappa\leq C_{112}(\gamma,t)<+\infty$. From \eqref{eq:majoI11} and \eqref{eq:majoI111}, we have that $I_{11}^\kappa \leq C_1(\gamma,t) /n$ for $C_1(\gamma,t)$ a constant. Gathering this result with \eqref{eq:majoI12}, we finally obtain that:
\begin{equation}\label{eq:majoI1}
I^\kappa_{1}\leq\dfrac{C_1(\gamma,t)}{n}+ \dfrac{16}{\gamma^2}\kappa   e^{- n.C_{eig}.M}.
\end{equation}

\noindent \textbf{Step 2:} Let us consider $I^\kappa_{2}$. Using Proposition~\ref{proposition.D&F19.1st.term}, we have:
\begin{align}
I^\kappa_2= & \int_{ \left\{ |x| \leq \kappa \right\} } \E\Big[\Var \Big( \widehat{G}_{\mu^{n}_{t}}\big( w_{fp}(x + i\gamma)\big) \ |\ X^n(0)\Big) \Big]  \dd x
=   \int_{ \left\{ |x| \leq \kappa \right\} } \E\Big[\Var \Big(  A^n_1\big( w_{fp}(x + i\gamma)\big) \ |\ X^n(0)\Big)  \Big]  \dd x \nonumber\\
 \leq  &   \int_{ \left\{ |x| \leq \kappa \right\} } \dfrac{10 \ t}{n^2 \Im^4\big(w_{fp}(x + i\gamma)\big) } dx
  \leq  \dfrac{10.2^{4}. t . 2\kappa}{n^{2} \gamma^{4}} \leq    \dfrac{20 \kappa}{ n^{2} t}. \label{ett2}
\end{align}

\noindent \textbf{Step 3:} Let us finally provide an upper bound for $I^\kappa_{3}$. Recall the definitions of $A^n_2(z)$ and $A^n_3(z)$ in \eqref{decomp:G}:
\begin{align}
I^\kappa_3 =  &  \int_{\left\{ |x| \leq \kappa \right\}} \E\Big[\big|A^n_2\big(w_{fp}(x+i\gamma)\big)+A^n_3\big(w_{fp}(x+i\gamma)\big)\big|^2 \Big] \dd x\nonumber\\
\leq &  2 \int_{\left\{ |x| \leq \kappa \right\}} \E\Big[\big|A^n_2\big(w_{fp}(x+i\gamma)\big)\big|^2 \Big] \dd x +2 \int_{\left\{ |x| \leq \kappa \right\}} \E\Big[\big|A^n_3\big(w_{fp}(x+i\gamma)\big)\big|^2\Big] \dd x.\label{et14}
\end{align}
By using Proposition \ref{prop.b_n(z).bound} together with Theorem \ref{thm.Fixpoint} (i) and the fact that $\gamma>2\sqrt{t}$, we obtain that the first term in the right hand side is upper-bounded by
\[2 \int_{\left\{ |x| \leq \kappa \right\}} \E\Big[\big|A^n_2\big(w_{fp}(x+i\gamma)\big)\big|^2 \Big]\dd x \leq \frac{c \kappa }{n^2t},\]
where $c$ is an absolute constant. Let us now consider the second term in the right hand side of \eqref{et14}. Using the bound of Proposition \ref{prop.fluctuG2},
\begin{multline}
2 \int_{\left\{ |x| \leq \kappa \right\}} \E\Big[\big|A^n_3\big(w_{fp}(x+i\gamma)\big)\big|^2 \Big] \dd x \\
\leq 2 \frac{\gamma^4}{(\gamma^2-4t)^2}\int_\R \E\Big[\Big|\int_\R \frac{1}{w_{fp}(x+i\gamma) - t. G_{\mu_t}\big(w_{fp}(x+i\gamma)\big)-v}[\dd \mu_0^n(v)-\dd \mu_0(v)]\Big|^2\Big]\dd x.\label{et15}
\end{multline}
Recall that $\mu_0^n$ is the empirical measure of independent random variables $(d_i^n)$ with distribution $\mu_0$ and whose order statistics are the $(\lambda_i^n(0))$. Recalling that $\big(w_{fp}(x+i\gamma)- t.G_{\mu_t}\big(w_{fp}(x+i\gamma)\big)-v\big)^{-1}=\varphi_{w_{fp}(x+i\gamma)}(v),$ we have that
\begin{align}
\E\Big[\Big|\int_\R \varphi_{w_{fp}(x+i\gamma)}(v)[\dd \mu_0^n(v)-\dd \mu_0(v)]\Big|^2\Big]
&=\Var\Big[\frac{1}{n}\sum_{j=1}^n\varphi_{w_{fp}(x+i\gamma)}(\lambda_j^n(0))\Big] \nonumber \\
&\leq \frac{1}{n}\E\big[|\varphi_{w_{fp}(x+i\gamma)}(d_1^n)|^2\big]\nonumber\\
 & = \frac{1}{n}\int_\R  \frac{1}{|w_{fp}(x+i\gamma)- t.G_{\mu_t}\big(w_{fp}(x+i\gamma)\big)-v|^2}
\dd \mu_0(v).\label{et16}
\end{align}
Recall that from Lemma~\ref{lem:Gmu0} and Theorem \ref{thm.Fixpoint} (i), $|w_{fp}(x+i\gamma)- t.G_{\mu_t}\big(w_{fp}(x+i\gamma)\big)-v| \geq |\Im\big(w_{fp}(x+i\gamma)\big)| \geq \gamma/2$, so that the integrand in the right hand side of \eqref{et16} is bounded. However, we have to work more to show that it is integrable with respect to $x$. We have:
\begin{align*}
|w_{fp}(x+i\gamma)- t.G_{\mu_t}\big(w_{fp}(x+i\gamma)\big)-v| &\geq \left|\Re\big(w_{fp}(x+i\gamma)- t.G_{\mu_t}\big(w_{fp}(x+i\gamma)\big)\big)-v\right|\\
&\geq \left|\Re\big(w_{fp}(x+i\gamma)\big)-v\right|-t\left|\Re\left(G_{\mu_t}\big(w_{fp}(x+i\gamma)\big)\right)\right|.
\end{align*}
By Theorem \ref{thm.Fixpoint} (i), we obtain that:
\[\left|\Re\left(G_{\mu_t}\big(w_{fp}(x+i\gamma)\big)\right)\right|\leq\left|\int_\R \frac{\dd\mu_t(y)}{w_{fp}(x+i\gamma)-y}\right|\leq\frac{1}{|\Im(w_{fp}(x+i\gamma))|}\leq\frac{2}{\gamma}.\]
Also, by using \eqref{estimee:wfp}, we get that $|\Re(w_{fp}(x+i\gamma))-x|\leq \sqrt{t}$. Therefore,
\begin{equation}
|w_{fp}(x+i\gamma)- t.G_{\mu_t}\big(w_{fp}(x+i\gamma)\big)-v|\geq \big||x|-|v|\big|-\sqrt{t}-\frac{2t}{\gamma}.
\label{et17}
\end{equation}
From \eqref{et15}, \eqref{et16} and \eqref{et17}, we have that:
\begin{multline*}
2 \int_{\left\{ |x| \leq \kappa \right\}} \E\Big[\big|A^n_3\big(w_{fp}(x+i\gamma)\big)\big|^2 \Big] \dd x \\
\leq \frac{2\gamma^4}{n(\gamma^2-4t)^2} \int_\R \int_\R  \frac{1}{\left( \left\{\big||x|-|v|\big|-\sqrt{t}-\frac{2t}{\gamma}\right\}\vee \frac{\gamma}{2}\right)^2}\dd \mu_0(v)\dd x = \frac{4\gamma^4}{n(\gamma^2-4t)^2} \mathfrak{I}_{2,\gamma,2t/\gamma,t},
\end{multline*}
by Lemma \ref{lem:xleqkappa}.
We conclude as for $I^\kappa_{11}$ and we obtain
\begin{equation}\label{eq:majoI3}
I^\kappa_{3}\leq\dfrac{c \kappa}{n^2t}+ \frac{4\gamma^4}{n(\gamma^2-4t)^2} \mathfrak{I}_{2,\gamma,2t/\gamma,t}.
\end{equation}
Gathering \eqref{eq:majoI1}, \eqref{ett2} and \eqref{eq:majoI3} we obtain the result announced in Lemma~\ref{lem:Ikappa}.
\end{proof}

\subsubsection{Upper bound  for  $J^\kappa$}

Recall the definition of $J^\kappa$ in \eqref{eq:Sigma}. Our goal is to prove the following bound:

\begin{lem}\label{lem:Jkappa}
There exist constants $C_J^1$, $C^2_J$ and $C_J^3$ (that can depend on $\gamma$ and $t$) such that, for any  $\kappa >\gamma,$ we have:
\begin{equation}
J^\kappa\leq \frac{C_J^1}{ \kappa }+ C_J^2 n e^{- \frac{n.C_{eig}.\kappa}{4}} + C_J^3 \mu_{0}\left( \left\{|\lambda| > \frac{\kappa}{4}\right\}\right).
\end{equation}
\end{lem}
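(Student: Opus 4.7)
The strategy is to reduce $J^\kappa$, which involves squared imaginary parts of Cauchy transforms, to an $L^1$-type integral against a Cauchy-like kernel, then exploit tail bounds already available for $\mu_t^n$ and $\mu_0$. First, I will bound
\[J^\kappa \le 2(J^\kappa_a + J^\kappa_b),\qquad J^\kappa_a := \int_{|x|>\kappa}\E\bigl[|\Im\widehat G_{\mu_t^n}(\widehat w_{fp}^n(x+i\gamma))|^2\bigr]dx,\quad J^\kappa_b := \int_{|x|>\kappa}|\Im G_{\mu_t}(w_{fp}(x+i\gamma))|^2 dx.\]
Since $\Im(w_{fp}(x+i\gamma)) \geq \gamma/2$ and $\Im(\widehat w_{fp}^n(x+i\gamma)) \geq \gamma/2$, we have $|\Im G_{\mu_t}(w_{fp}(x+i\gamma))| \le 2/\gamma$ (by Lemma~\ref{lem.Lipschitz.CauchyTransform}) and similarly for the empirical version. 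Hence $|\Im G|^2 \le (2/\gamma)|\Im G|$, reducing the problem to estimating first-moment integrals.

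The key step is the algebraic lower bound
\[|w - y|^2 \ge c_0(\gamma, t)\bigl((x-y)^2 + \gamma^2\bigr),\qquad y\in\R,\]
valid for $w \in \{w_{fp}(x+i\gamma),\ \widehat w_{fp}^n(x+i\gamma)\}$, with $c_0(\gamma,t) > 0$ when $\gamma > 2\sqrt t$. This rests on \eqref{estimee:wfp} and Theorem-Definition~\ref{th:defestimator}, which yield $|\Re w - x| \le \sqrt t$ and $|\Im w - \gamma| \le \sqrt t$. Writing $\Re w - y = (x-y)+\alpha$ with $|\alpha|\le\sqrt t$, an AM-GM inequality gives $(\Re w - y)^2 \ge (x-y)^2/2 - t$, while $\Im(w)^2 \ge (\gamma-\sqrt t)^2 = \gamma^2 - 2\gamma\sqrt t + t$; summing and using $\gamma > 2\sqrt t$ makes the cross term positive and yields the required bound. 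This is the main technical obstacle; its proof genuinely requires $\gamma > 2\sqrt t$, consistent with the overall restriction of the paper.

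From $|\Im G_\mu(w)| = \Im(w)\int d\mu(y)/|w-y|^2$ and the key inequality, together with $\Im(w) \le \gamma + \sqrt t$, Fubini's theorem yields
\[\int_{|x|>\kappa}|\Im G_\mu(w(x))|dx \;\le\; \frac{\gamma+\sqrt t}{c_0}\int\left[\int_{|x|>\kappa}\frac{dx}{(x-y)^2+\gamma^2}\right]d\mu(y).\]
For $|y| \le \kappa/2$, the inner integral equals $\frac{2}{\gamma}\arctan\!\bigl(\tfrac{2\gamma}{\kappa}\bigr) \le \frac{4}{\kappa}$ (valid whenever $\kappa>\gamma$, using $\arctan u \le u$); for $|y|>\kappa/2$, it is crudely bounded by $\pi/\gamma$. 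Hence the inner integral is at most $(4/\kappa) + (\pi/\gamma)\mu(\{|y|>\kappa/2\})$.

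Applying this with $\mu = \mu_t^n$ inside an expectation and invoking the tail bound \eqref{eq:lemmekato},
\[\E[\mu_t^n(\{|\lambda|>\kappa/2\})] \le \mu_0(\{|\lambda|>\kappa/4\}) + n e^{-nC_{eig}\kappa/4},\]
gives the desired decomposition for $J^\kappa_a$. For $J^\kappa_b$, the same argument with $\mu = \mu_t$ requires controlling $\mu_t(\{|y|>\kappa/2\})$: I will deduce this from Proposition~\ref{prop:LGN.mu.t} (a.s.\ weak convergence of $\mu_t^n$ to $\mu_t$) by taking $n\to\infty$ in \eqref{eq:lemmekato} along continuity points, which yields $\mu_t(\{|y|>\kappa/2\})\le \mu_0(\{|\lambda|>\kappa/4\})$ since the $n\,e^{-nC_{eig}\kappa/4}$ term vanishes. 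Summing $2(J^\kappa_a+J^\kappa_b)$ and collecting constants (all depending only on $\gamma$ and $t$) into $C_J^1,C_J^2,C_J^3$ completes the proof.
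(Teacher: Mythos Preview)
Your proposal is correct and follows essentially the same route as the paper: decompose $J^\kappa$ into an empirical and a limiting piece, use $|w-(x+i\gamma)|\le\sqrt t$ to replace $|w-\lambda|^2$ by a Cauchy-type kernel $c_0\big((x-\lambda)^2+\gamma^2\big)$, integrate in $x$ over $\{|x|>\kappa\}$ via $\arctan$, split according to the size of $|\lambda|$, and finish with the tail estimate \eqref{eq:lemmekato} (passing to the limit for $\mu_t$). The only cosmetic differences are that the paper applies Jensen to bound $|G|^2$ by $\int|w-\lambda|^{-2}\dd\mu$ directly (rather than your reduction $|\Im G|^2\le(2/\gamma)|\Im G|$), and carries out a slightly more elaborate three-region $\arctan$ computation where your two-case split $|y|\lessgtr\kappa/2$ suffices; neither difference is substantive.
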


\begin{proof}
We decompose $J^\kappa\leq 2 (J^\kappa_1+J^\kappa_2)$ where
\begin{align*}
J^\kappa_1 &:= \int_{ \left\{ |x| > \kappa \right\} }  \mathbb{E} \bigg( \Big| \int_{\R} \dfrac{ \dd \mu^{n}_{t}(\lambda) }{ \widehat{w}^{n}_{fp}(x + i\gamma ) - \lambda }  \Big|^{2} \bigg) \dd x \\
J^\kappa_2 &:= \int_{ \left\{ |x| > \kappa \right\} } \Big| \int_{\R}  \dfrac{ \dd \mu_{t}(\lambda) }{ w_{fp}(x + i \gamma) - \lambda }  \Big|^{2} \dd x.
\end{align*}

Let us consider the first term $J^\kappa_{1}$. Using the estimate of Theorem-Definition \ref{th:defestimator}, we have for all $x\in \R$ that $\big| \Re \big(\widehat{w}^{n}_{fp}(x + i \gamma)\big) - x \big| \leq \sqrt{t}$ and $\Im\big( \widehat{w}^{n}_{fp}(x + i\gamma)\big)\geq \gamma/2.$ This allows us to prove that there exists a constant $C_{asymp}$ such that
\begin{equation}(x-\lambda)^2+\frac{\gamma^2}{4} \leq C_{asymp} \Big(\Re^2\big( \widehat{w}^{n}_{fp}(x + i\gamma) - \lambda \big)^{2} + \frac{\gamma^{2}}{4}\Big).\label{eq:Casympt}\end{equation}
Thus,
\begin{align*}
J^\kappa_1 &\leq  \int_{ \left\{ |x| > \kappa \right\} }  \mathbb{E}  \int_{\R} \dfrac{ \dd \mu^{n}_{t}(\lambda) }{ \Re^{2} \big( \widehat{w}^{n}_{fp}(x + i\gamma) - \lambda \big) + \Im^2\big(\widehat{w}^{n}_{fp}(x + i\gamma ) \big)^{2} }  \dd x
\\
& \leq C_{asymp}  \int_{ \left\{ |x| > \kappa \right\} }  \mathbb{E} \bigg[ \int_{\R} \dfrac{ \dd \mu^{n}_{t}(\lambda) }{  (x -\lambda)^2 + \frac{\gamma^{2}}{4} } \bigg]  \dd x \\
& = C_{asymp}  \, \mathbb{E} \bigg[ \int_{\R} \dd \mu^{n}_{t}(\lambda)  \int_{ \left\{ |x| > \kappa \right\} } \frac{1}{  (x -\lambda)^2 + \frac{\gamma^{2}}{4} } \dd x\bigg] \\
& = \frac{2  C_{asymp}  }{\gamma} \, \mathbb{E} \left[ \int_{\R} \dd \mu^{n}_{t}( \lambda)\left(\pi - \arctan\left(\frac{2}{\gamma}(\kappa-\lambda)\right) -\arctan\left( \frac{2}{\gamma}(\kappa+\lambda)\right)\right)\right] \\
& = \frac{2  C_{asymp}  }{\gamma} \, \mathbb{E} \left[ \int_{\R} \dd \mu^{n}_{t}( \lambda)\left(\arctan\left(\frac{4\kappa \gamma}{4\kappa^2-4\lambda^2 -\gamma^2}\right)
+\pi \mathbf 1_{\left\{\lambda^2 > \kappa^2 - \frac{\gamma^2}{4}\right\}}\right) \right].
\end{align*}
We now use the simple bounds $|\arctan x| \le |x|$ and $|\arctan x| \le \frac{\pi}{2}$ for any $x \in \R.$
Moreover, one can easily check that, if $\lambda^2 \leq \frac{\kappa^2}{2}- \frac{\gamma^2}{4},$ then
\[ \frac{4\kappa \gamma}{4\kappa^2-4\lambda^2 -\gamma^2} \le \frac{2 \gamma}{\kappa}. \]
We therefore get
\begin{align*}
J^\kappa_1 &\leq \frac{2  C_{asymp}  }{\gamma} \, \mathbb{E} \left[ \int_{\lambda} \dd \mu^{n}_{t}( \lambda)\left( \frac{2 \gamma}{\kappa} + \frac{\pi}{2}
  1_{\left\{\lambda^2 > \frac{\kappa^2}{2} - \frac{\gamma^2}{4}\right\}}
+\pi  1_{\left\{\lambda^2 > \kappa^2 - \frac{\gamma^2}{4}\right\}}\right)\right].
\end{align*}
If we assume moreover that $\kappa > \gamma,$ this can be simplified as follows:
\begin{align}
J^\kappa_1 \leq  &  C_{asymp} \left(\frac{4  }{\kappa} + \frac{3\pi }{\gamma}  \mathbb{E} \left[ \mu^{n}_{t}\left( \{|\lambda| > \frac{\kappa}{2}\}\right) \right]\right)\nonumber\\
\leq &   C_{asymp} \left(\frac{4    }{\kappa} + \frac{3\pi  }{\gamma}\mu_{0}\left(\left\{|\lambda | > \frac{\kappa}{4}\right\}\right) +  \frac{3\pi  }{\gamma}
 n   e^{- \frac{n.C_{eig}.\kappa}{4}}\right),\label{eq:Jkappa1}
\end{align}
by using \eqref{eq:lemmekato}.

We now go to the second term $J^\kappa_2.$ The strategy will be very similar to what we did for $J^\kappa_1$ and we will give less details.
 Using the estimate  \eqref{estimee:wfp}, we have for all $x\in \R$ that $\big| \Re \big({w}_{fp}(x + i \gamma)\big) - x \big| \leq \sqrt{t}$, which allows us to get that
\[(x-\lambda)^2+\frac{\gamma^2}{4} \leq C_{asymp} \Big(\Re^2\big( w_{fp}(x + i\gamma) - \lambda \big)^{2} + \frac{\gamma^{2}}{4}\Big),\]
with $C_{asymp}$ as above.
Thus,
\begin{align*}
J^\kappa_2 &\leq  C_{asymp}  \int_{ \left\{ |x| > \kappa \right\} } \int_{\lambda} \dfrac{ \dd \mu_{t}(\lambda) }{  (x -\lambda)^2 + \frac{\gamma^{2}}{4} }   \dd x \\
& \leq \frac{2  C_{asymp}  }{\gamma} \,\int_{\lambda} \dd \mu_{t}( \lambda)\left( \frac{2 \gamma}{\kappa} + \frac{\pi}{2}
  1_{\left\{\lambda^2 > \frac{\kappa^2}{2} - \frac{\gamma^2}{4}\right\}}
+\pi  1_{\left\{\lambda^2 > \kappa^2 - \frac{\gamma^2}{4}\right\}}\right).
\end{align*}
Again, if we assume that $\kappa > \gamma,$ this can be simplified as follows:
\[
J^\kappa_2 \leq C_{asymp} \left(\frac{4    }{\kappa } + \frac{3\pi}{\gamma}  \mu_{t}\left( \left\{|\lambda| > \frac{\kappa}{2}\right\}\right)\right).
\]
Moreover, letting $n$ going to infinity in \eqref{eq:lemmekato}, by Proposition \ref{prop:LGN.mu.t} and dominated convergence, we get that, for any $\kappa > \gamma,$
\[  \mu_{t}\left( \left\{|\lambda| > \frac{\kappa}{2}\right\}\right) \le  \mu_{0}\left( \left\{|\lambda| > \frac{\kappa}{4}\right\}\right), \]
so that
\begin{equation}\label{eq:Jkappa2}
J^\kappa_2
\le C_{asymp} \left(\frac{4    }{\kappa} + \frac{3\pi  }{\gamma}  \mu_{0}\left( \left\{|\lambda| > \frac{\kappa}{4}\right\}\right)\right).
\end{equation}

Gathering the upper bounds  \eqref{eq:Jkappa1} and \eqref{eq:Jkappa2}, we get that for any  $\kappa >\gamma,$ 
\begin{equation}
J^\kappa\leq C_{asympt}\left(\frac{8}{\kappa}+ \frac{6\pi}{\gamma}  \mu_{0}\left( \left\{|\lambda| > \frac{\kappa}{4}\right\} \right)+ \frac{3\pi}{\gamma}ne^{-\frac{n.C_{eig}.\kappa}{4}}\right).\label{eq:Jkappa}
\end{equation}This ends the proof.
\end{proof}

\subsubsection{Conclusion}

As a result, combining  Lemma \ref{lem:Ikappa} and Lemma \ref{lem:Jkappa}, we have:
\begin{align*}
	I^\kappa+J^\kappa \leq & \frac{C^1_I}{n}+ \frac{C_I^2 \kappa}{n^2}+ \frac{C_J^1}{\kappa }+ C_I^3\kappa e^{-n.C_{eig}.M} + C_J^2 n e^{- n C_{eig}.\frac{\kappa}{4}} + C_J^3 \mu_0\left(\left\{ |\lambda| > \frac{\kappa}{4}\right\} \right).
\end{align*}
We take $\kappa=n$. Using Assumption~\eqref{H2}, we obtain
\begin{equation}
\mu_0\left(\left\{ |\lambda| > n\right\} \right)\leq Cn^{-1},
\end{equation}
for some absolute constant $C$. Then, from \eqref{eq:Sigma} and previous computations, there exists a constant $C_{var}$ (that can depend on $\gamma$ and $t$) such that for $n$ sufficiently large:
\begin{equation}
	\mathbb{E} (\Sigma) \leq \frac{C_{var}. e^{\frac{2\gamma}{h}}}{n}
\end{equation}
and Theorem~\ref{variance} is proved.



\section{Numerical simulations}\label{section:numerical}

In this section, we conduct a simulation study to assess the performances of our estimator $\widehat {p}_{0,h}$ designed in Definition~\ref{def:est} based on the $n$-sample $\lambda^n(t):=\{ \lambda_1^n(t),\cdots,\lambda_n^n(t) \}$ of (non ordered) eigenvalues. We consider the sample size $n=4000$ and the time value $t = 1$. We focus on initial conditions following a Cauchy distribution with scale parameter $s_d= 5$: $$p_{0}(x) = \dfrac{1}{\pi}. \dfrac{s_d}{(s_d^2+x^2)},\quad x\in \mathbb{R}.$$ Expression \eqref{Fourier.f_mu0.hat} is used with the kernel $K(x)=\sinc(x)=\sin(x)/(\pi x)$,  and the value $\gamma = 2\sqrt{t}+0.01$ so that the condition $ \gamma > 2 \sqrt{t}$  is satisfied. To implement $\widehat {p}_{0,h}$, we approximate integrals involved in Fourier and inverse Fourier transforms by Riemann sums, so it may happen that $\widehat {p}_{0,h}(x)$ is not real. This is the reason why the density $p_0$ is estimated with $\Re (\widehat {p}_{0,h})$, the real part of $\widehat {p}_{0,h}$.

The theoretical bandwidth $h$ proposed in Section~\ref{section:MISE} cannot be used in practice and
we suggest the following data-driven selection rule, inspired from the principle of cross-validation. We decompose the quadratic risk for $\Re(\widehat{p}_{0,h})$ as follows:
\begin{align*}
	\left\| \Re(\widehat{p}_{0,h}) - p_{0}\right\|^{2} = \int_{\mathbb{R}}  \left|  \Re(\widehat{p}_{0,h}(x)) - p_{0}(x) \right|^{2} dx
	= \left\| \Re(\widehat{p}_{0,h}) \right\|^{2} - 2\int_{\mathbb{R}} \Re( \widehat{p}_{0,h}(x)) p_{0}(x) dx + \left\| p_{0}\right\|^{2}.
\end{align*}
Then, an ideal bandwidth $h$ would minimize the criterion $J$ with
$$
J(h):=\left\| \Re(\widehat{p}_{0,h} )\right\|^{2} - 2\int_{\mathbb{R}}  \Re(\widehat{p}_{0,h}(x))p_{0}(x) dx,\quad h\in\R_+^*.
$$
Since $J$ depends on $p_{0}$ through the second term, we investigate a good estimate of this criterion. For this purpose, we divide the sample $\lambda^n(t)$ into two disjoints sets \[
\mathbf{\lambda}^{n,E}(t) := (\lambda_i^n(t))_{i\in E}\quad\text{ and }\quad \mathbf{\lambda}^{n,E^c}(t) := (\lambda_i^n(t))_{i\in E^c}.
\]
There are $V_{\max} := \binom{n}{n/2}$ possibilities to select the subsets $(E, E^c)$, which is huge. Hence, to reduce  computational time, we draw randomly $V=10$ partitions denoted $(E_j, E_j^c)_{j=1,\ldots,V}$. Choosing the grid $\mathcal{H}$ of $50$ equispaced points lying  between $h_{\min}=0.25$ and $h_{\max}=2.7$, our selected bandwidth is
\begin{align}
	\hat h = \underset{h \in \mathcal{H}}{\textrm{argmin }} \textrm{Crit}(h) \label{def:Crit(h)} 
\end{align}
with
\begin{equation*}
	\textrm{Crit}(h) :=  \min_{h' \in \mathcal{H}, h' \neq h} \dfrac{1}{V} \sum_{j=1}^{V} \left( \left\| \Re(\widehat{p}^{(E_{j})}_{0,h}) \right\|^{2} - 2\int_{\mathbb{R}} \Re( \widehat{p}^{(E_{j})}_{0,h}(x)) \Re( \widehat{p}_{0,h'}^{(E^{c}_{j})} (x)) dx  \right)
\end{equation*}
and our final estimator is then $\Re(\widehat p_{0,\hat h})$. In the last expression, $ \widehat{p}^{(E_{j})}_{0,h}$ and $\widehat{p}_{0,h'}^{(E^{c}_{j})}$ are estimates based on the samples $E_j$ and $E_j^c$ respectively.

To evaluate our approach, Figure \ref{Crit_vs_MISE} displays the plot of $h\in \mathcal{H}\mapsto\textrm{Crit}(h)$ and $h\in \mathcal{H}\mapsto J(h)$  for the Cauchy density  $p_0$.
\begin{figure}[h!]
\begin{center}
\includegraphics[scale=0.3]{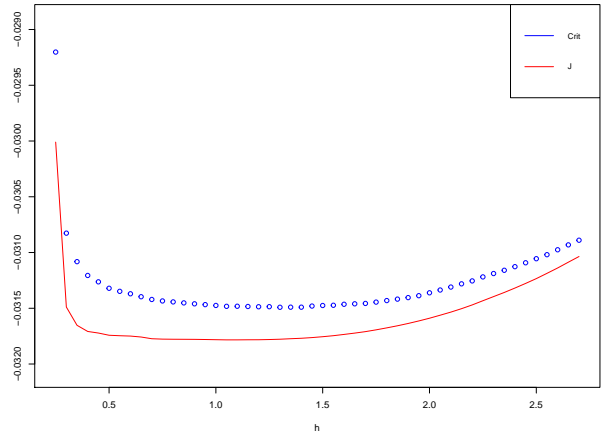}
\end{center}
\caption{Plots of $h\mapsto\textrm{Crit}(h)$ and $h\mapsto J(h)$ for the Cauchy density $p_0$}
\label{Crit_vs_MISE}
\end{figure}
A close inspection of the graphs shows that the first criterion is a good estimate of the second one. As expected, for both criterions, we observe a plateau containing minimizers of $J$ and $\textrm{Crit}$. Outside the plateau, both criterions take large values due to large variance when $h$ is too small and to large bias when $h$ is too large.
\begin{figure}[h!]
\begin{center}
\includegraphics[scale=0.3]{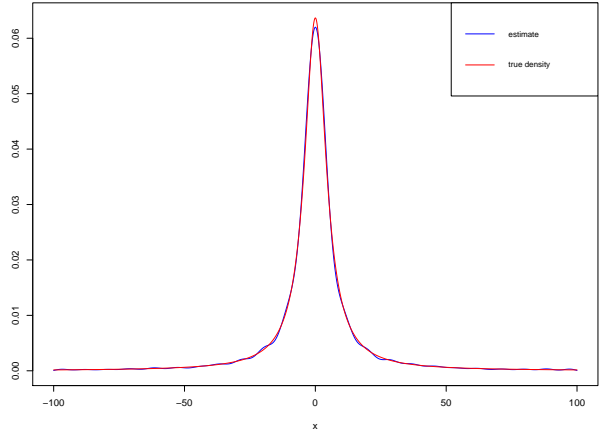}
\end{center}
\caption{Estimation of $p_{0}$}
\label{reconstruction-cauchy}
\end{figure}
Figure~\ref{reconstruction-cauchy} gives the reconstruction provided by $\Re(\widehat p_{0,\hat h})$ for the Cauchy density $p_0$. The results are quite satisfying, meaning that our estimation procedure seems to perform well in practice for estimating initial conditions of the Fokker-Planck equation.

\appendix
\section{Proof of technical lemmas and Corollary \ref{thm:MISE}}
\subsection{Proof of Lemma \ref{lem:but1}}
Recall that $R_{n,t}(z)$ and $\widetilde{R}_{n,t}(z)$ are defined in \eqref{eq:Rnt} and \eqref{eq:Rtildent}, and that
\begin{equation}
n \widetilde{A}^n_2(z) = \sum_{k=1}^n \E\left[ \big(R_{n,t}(z)\big)_{kk}\ |\ X^n(0)\right]-\big(\widetilde{R}_{n,t}(z)\big)_{kk}.\label{etape9}
\end{equation}
Proceeding as in Dallaporta and Février \cite{Fevrier1}, we introduce some notations. Let $R^{(k)}_{n,t}(z)$ be the resolvent of the $(n-1) \times (n-1)$ obtained from $X^{n}(t)$ by removing the $k$-th row and column and $C^{(k)}_{k,t}$ be the $(n-1)$-dimensional vector obtained from the $k$-th column of $H^{n}(t)$ by removing its $k$-th component.\\
Using Schur's complement (see e.g. \cite[Appendix A.1]{BaiSilver}):
\begin{align*}
		\Big(\big( R_{n,t}(z) \big)_{kk}\Big)^{-1} = z - \left(H^{n}(t)\right)_{kk} -  \left(X^{n}(0)\right)_{kk} -  C^{(k) \ast}_{k,t}. R^{(k)}_{n,t}(z). C^{(k)}_{k,t}.
	\end{align*}Because $\widetilde{R}_{n,t}(z)$ is a diagonal matrix, we have easily:
\begin{align*}
		\big( R_{n,t}(z) \big)_{kk} =&  \big( \widetilde{R}_{n,t}(z) \big)_{kk}
		\\
		 &+  \big( \widetilde{R}_{n,t}(z) \big)_{kk}. \big( R_{n,t}(z) \big)_{kk}. \Big(  \left(H^{n}(t)\right)_{kk} +  C^{(k) \ast}_{k,t}. R^{(k)}_{n,t}(z). C^{(k)}_{k,t}  -  \frac{t}{n} \mathbb{E} \big[ \Tr \left(R_{n,t}(z)\ |\ X^n(0)\right) \big]  \Big)   \quad .
	\end{align*}
Replacing $\big( R_{n,t}(z) \big)_{kk}$ in the right-hand side of the previous formula, we obtain:
\begin{multline}
	\left( R_{n,t}(z) \right)_{kk} - \big( \widetilde{R}_{n,t}(z) \big)_{kk} \\
\begin{aligned}
=&  \big( \widetilde{R}_{n,t}(z) \big)_{kk}^{2}. \Big(  \left(H^{n}(t)\right)_{kk} + C_{k,t}^{(k) \ast}. R^{(k)}_{n,t}(z).  C_{k,t}^{(k)} - \frac{t}{n} \mathbb{E}\left[ \Tr \left(R_{n,t}(z)\Big)\ |\ X^n(0) \right] \right)
	\\
	+&  \big( \widetilde{R}_{n,t}(z) \big)_{kk}^{2}. \big( R_{n,t}(z) \big)_{kk}.  \Big(  \left(H^{n}(t)\right)_{kk} + C_{k,t}^{(k) \ast}. R^{(k)}_{n,t}(z). C_{k,t}^{(k)} - \frac{t}{n} \mathbb{E}\left[ \Tr \left(R_{n,t}(z)\right)\ |\ X^n(0) \right] \Big)^{2} .
\end{aligned}\label{etape7}
\end{multline}
Since $H^{n}(t)$ and $C_{k,t}^{(k)}$ are independent of $X_{n}(0)$,
\begin{multline}
\mathbb{E} \Big[ \Big| \left(H^{n}(t)\right)_{kk} + C_{k,t}^{(k) \ast}. R^{(k)}_{n,t}(z) . C_{k,t}^{(k)} - \frac{t}{n} \mathbb{E}\left[ \Tr \left(R_{n,t}(z)\right) \ |\ X^n(0)\right] \Big|^{2}  \, |\,   X^n(0) \Big] \\
	\begin{aligned}
 = &
	\mathbb{E} \Big[ \Big| \left(H^{n}(t)\right)_{kk} + C_{k,t}^{(k) \ast}. R^{(k)}_{n,t}(z) . C_{k,t}^{(k)} - \frac{t}{n}  \Tr \big( R^{(k)}_{n,t}(z) \big) + \frac{t}{n}  \Tr  \big( R^{(k)}_{n,t}(z) \big) - \frac{t}{n} \mathbb{E} \big[ \Tr \big( R^{(k)}_{n,t}(z) \big) \ |\ X^n(0) \big]
	\\ &    + \frac{t}{n}\mathbb{E} \big[ \Tr \big( R^{(k)}_{n,t}(z) \big) \ |\ X^n(0)\big] - \frac{t}{n} \mathbb{E}\left[ \Tr \left(R_{n,t}(z)\right) \ |\ X^n(0)\right] \Big|^{2} \big| X^n(0) \Big]
	\\
	 = &
	\mathbb{E} \left[ \left(H^{n}(t)\right)_{kk}^{2} \right] + \mathbb{E}\Big[ \Big|  C_{k,t}^{(k) \ast}. R^{(k)}_{n,t}(z). C_{k,t}^{(k)} - \frac{t}{n}  \Tr  \big( R^{(k)}_{n,t}(z) \big)  \Big|^{2} \,|\, X^n(0) \Big]
	\\
	& + \frac{t^2}{n^2} \Big(  \textrm{Var} \big[ \Tr \big( R_{n,t}^{(k)}(z) \big) \big| X^n(0) \big]  + \Big| \mathbb{E} \big[ \Tr \big( R_{n,t}^{(k)}(z) \big) - \Tr \left(R_{n,t}(z)\right)\, |\, X^n(0) \big] \Big|^{2} \Big) .
\end{aligned}\label{etape5}
\end{multline}
We now upper bound each of the term in the right-hand side of \eqref{etape5}. The first term equals to $t/n$.\\

\noindent \textbf{Step 1:} We upper bound the second term in \eqref{etape5}. By Lemma 5 of \cite{Fevrier1},
\begin{equation}\label{eq:CRC=trR}
\E\Big[ C_{k,t}^{(k) \ast}. R^{(k)}_{n,t}(z). C_{k,t}^{(k)}  \ |  \ X^n(0)\Big]=\frac{t}{n} \E\Big[\Tr  \big( R^{(k)}_{n,t}(z) \big) \ |\ X^n(0)\Big].
\end{equation}
Thus, the second term in \eqref{etape5} equals to
$\Var\big(C_{k,t}^{(k) \ast}. R^{(k)}_{n,t}(z). C_{k,t}^{(k)} \ | \ X^n(0) \big)$ and we have:

\begin{align*}
\Var\left[ C_{k,t}^{(k) \ast}. R^{(k)}_{n,t}(z). C_{k,t}^{(k)}  \ |  \ X^n(0)\right] =  & \frac{t^2}{n^2} \mathbb{E}\left[\Tr \big(R^{(k),\ast}_{n,t}(z).R^{(k)}_{n,t}(z)\big) \ |\ X^n(0)\right]\\
\leq & \frac{t^2}{n^2} \mathbb{E}\left[ \sum_{j=1}^n \frac{1}{|z-\lambda_j^{(k)} |^2} \ |\ X^n(0)\right]
\end{align*}where the $\lambda_j^{(k)}$'s are the eigenvalues of the matrix with resolvent $R^{(k)}_{n,t}(z)$.
Hence,
\begin{equation}
\Var\left[ C_{k,t}^{(k) \ast}. R^{(k)}_{n,t}(z). C_{k,t}^{(k)}  \ |  \ X^n(0)\right]\leq \frac{t^2}{n \Im^2(z)}.\label{ub2}
\end{equation}

\noindent \textbf{Step 2:} We now upper bound the third and fourth terms of \eqref{etape5}. Let us denote in the sequel by $\mathbb{E}_{k}$ the expectation with respect to $\left\{ \left(H^{n}(t)\right)_{jk} : 1 \leq j \leq n \right\}$, and by $\mathbb{E}_{\leq k}$ the conditional expectation on the sigma-field $\sigma \left( (\left(X^{n}(0)\right)_{ij} , 1 \leq i \leq j \leq n), (\left(H^{n}(t)\right)_{ij}, 1 \leq i \leq j \leq k) \right)$.\\

We have:
\begin{equation}\label{etape8}
\Var \big[ \Tr \big( R_{n,t}^{(k)}(z) \big) \big| X^n(0) \big]\leq 2\Var \big[ \Tr \big( R_{n,t}(z) \big) \big| X^n(0) \big]+ 2 \Var\big[\Tr \big( R_{n,t}(z) \big) - \Tr(R_{n,t}^{(k)}(z) \big| X^n(0) \big].
\end{equation}For the first term,
\begin{align}
\textrm{Var} \big[ \Tr \big( R_{n,t}(z) \big) \big| X^n(0) \big] = & \sum_{k=1}^n \E\left[ \left|  \big(\mathbb{E}_{\leq k} - \mathbb{E}_{\leq k-1} \big) \Tr  \left( R_{n,t}(z) \right) \right|^{2} \ |\ X^n(0)\right]\nonumber\\
= & \sum_{k=1}^n \E\left[ \left|  \big(\mathbb{E}_{\leq k} - \mathbb{E}_{\leq k-1} \big) \big(\Tr  ( R_{n,t}(z) )-\Tr  ( R_{n,t}^{(k)}(z) )\big) \right|^{2} \ |\ X^n(0)\right],\label{etape6}
\end{align}as $\big(\mathbb{E}_{\leq k} - \mathbb{E}_{\leq k-1} \big)\Tr  \left( R_{n,t}^{(k)}(z) \right)=0$.
The Schur complement formula (see e.g. \cite[Appendix A.1]{BaiSilver}) gives that:
	\begin{equation} \label{proof.Var.TrR_n.TrR^k_n}
		\Tr\big( R_{n,t}(z) \big) - \Tr\big( R^{(k)}_{n,t}(z) \big) = \dfrac{1 + C^{(k) \ast}_{k,t}. R^{(k)}_{n,t}(z)^{2}. C^{(k)}_{k,t} }{ z - \left(H^{n}(t)\right)_{kk} - \left(X^{n}(0)\right)_{kk} - C^{(k) \ast}_{k,t}. R^{(k)}_{n,t}(z). C^{(k)}_{k,t} }.
	\end{equation}
Then,	
\begin{align}
	\left| \Tr\big( R_{n,t}(z) \big) - \Tr\big( R^{(k)}_{n,t}(z) \big) \right|  & \leq  \dfrac{ \left| 1 + C^{(k) \ast}_{k,t}. R^{(k)}_{n,t}(z)^{2}. C^{(k)}_{k,t} \right| }{ \left| \Im \left( z - \left(H^{n}(t)\right)_{kk} - \left(X^{n}(0)\right)_{kk} - C^{(k) \ast}_{k,t}. R^{(k)}_{n,t}(z). C^{(k)}_{k,t} \right) \right| }  \nonumber
		\\
		& \leq \dfrac{ 1 + \left| C^{(k) \ast}_{k,t}. R^{(k)}_{n,t}(z)^{2}. C^{(k)}_{k,t} \right| }{\left| \Im(z) - \Im\left( C^{(k) \ast}_{k,t}. R^{(k)}_{n,t}(z). C^{(k)}_{k,t} \right) \right|}  \nonumber		\\
		&\leq  \dfrac{ 1 +  C^{(k) \ast}_{k,t}. R^{(k)}_{n,t}(z)^{\ast}. R^{(k)}(z). C^{(k)}_{k,t}  }{ \left| \Im(z) + \Im\left(z\right) . C^{(k) \ast}_{k,t}. R^{(k)}_{n,t}(z)^{\ast}. R^{(k)}_{n,t}(z). C^{(k)}_{k,t}  \right| }  \nonumber \\
		&= \dfrac{1}{\Im(z)} . \label{proof.R_n.rewrite.bound}
	\end{align}
The second inequality it due to the fact that $\left(H^{n}(t)\right)_{kk}, \left(X^{n}(0)\right)_{kk} \in \R$  and the third inequality comes from the following equality: With $\Psi\ : \ M\in \mathcal{H}_n(\C)\mapsto C^* M C$ with $C\in \C^n$, then, for any $z\in\C$ and any resolvent matrix $R(z)$, we have (see \cite[Lemma 1]{Fevrier1})
\[\Im\big(\Psi(R(z))\big)= -\Im (z) \Psi\big(R(z)^* R(z)\big). \]
The bound \eqref{proof.R_n.rewrite.bound} does not depend on $X^n(0)$. Plugging this bound into \eqref{etape6}, we obtain:
\begin{equation*}
	\textrm{Var} \big[ \Tr \big( R_{n,t}(z) \big) \big| X^n(0) \big] \leq \frac{4n}{\Im^2(z)}.
\end{equation*}From there, using \eqref{etape8},
\begin{equation}\label{ub3}
	\textrm{Var} \big[ \Tr \big( R^{(k)}_{n,t}(z) \big) \big| X^n(0) \big] \leq \frac{8n+2}{\Im^2(z)}.
\end{equation}

Similarly, \eqref{proof.R_n.rewrite.bound} also provides an upper bound for the fourth term of \eqref{etape5}:
\begin{equation}
\Big| \mathbb{E} \big[ \Tr \big( R_{n,t}^{(k)}(z) \big) - \Tr \left(R_{n,t}(z)\right) \ |\ X^n(0) \big] \Big|^{2} \leq \frac{1}{\Im^2(z)}.\label{ub4}
\end{equation}

\noindent \textbf{Step 3:} In conclusion, using \eqref{etape5}, \eqref{ub2}, \eqref{ub3} and \eqref{ub4}, we obtain that:
\begin{multline*}
 \mathbb{E} \left[ \left| \left(H_{n}(t)\right)_{kk} + C_{k,t}^{(k) \ast}. R^{(k)}_{n,t}(z). C_{k,t}^{(k)} - \frac{t}{n} \mathbb{E}\left[ \Tr \left(R_{n,t}(z)\right)\ |\ X^n(0) \right] \right|^{2} \big| X^n(0) \right]
	\\
	\leq  \frac{t}{n}  + \frac{t^2}{n \Im^2(z)}  + \left( 8n + 3 \right)\frac{t^2}{n^2 \Im^2(z)}.
\end{multline*}

Going back to \eqref{etape7} and using \eqref{eq:CRC=trR} to upper-bound the first term in the right-hand side:
\begin{multline*}
	\left| \mathbb{E} \left[  \left( R_{n,t}(z) \right)_{kk} - \big( \widetilde{R}_{n,t}(z) \big)_{kk} \ \big| \ X^{n}(0) \right] \right| \\
\begin{aligned}
	& \leq  \frac{t}{n} \big| \big(\widetilde{R}_{n,t}(z) \big)_{kk} \big|^{2}. \mathbb{E} \left[ \big|  \Tr \big(R^{(k)}_{n,t}(z)\big) - \Tr \big(R_{n,t}(z)\big) \big| \hspace{0.2cm} \Big| \hspace{0.2cm} X^{n}(0) \right]
	 \\
	&  + \big| \big(\widetilde{R}_{n,t}(z) \big)_{kk} \big|^{2}. \mathbb{E} \left[ \left|\big(R_{n,t}(z) \big)_{kk}\right|. \big|\left(H^{n}(t)\right)_{kk} + C_{k,t}^{(k) \ast}. R^{(k)}_{n,t}(z). C_{k,t}^{(k)} \right.\nonumber\\
  & - \left.\frac{t}{n} \mathbb{E}\left[ \Tr \left(R_{n,t}(z)\right)\ |\ X^n(0) \right] \big|^{2} \hspace{0.2cm} \Big| \hspace{0.2cm} X^{n}(0) \right]
	\\
	& \leq \big| \big(\widetilde{R}_{n,t}(z) \big)_{kk} \big|^{2} . \left( \frac{t}{n \Im (z)} + \frac{t}{n \Im (z)} + \frac{t^2}{n \Im^3 (z)} + \frac{(8n+3)t^2}{n^2 \Im^3 (z)} \right) \\
	& \leq \big| \big(\widetilde{R}_{n,t}(z) \big)_{kk} \big|^{2} . \frac{1}{n}\left( \frac{2t}{\Im (z)}+ \frac{12 t^2}{\Im^3 (z)} \right) .
\end{aligned}
\end{multline*}
Using this upper bound in \eqref{etape9}, we obtain by summation the result and using that for any $k$,
\[\big|\widetilde{R}_{n,t}(z) \big)_{kk} \big|^{2}\leq \frac{1}{\Im^2(z)}.\]
\subsection{Proof of Lemma \ref{lem:inverse}}
From \eqref{def:woverline} and introducing $\overline{w}_1(z)$ such that:
\[G_{\mu_0^n\boxplus \sigma_t}(z)=G_{\mu^n_0}\big(\overline{w}_{fp}(z)\big)=G_{\sigma_t}(\overline{w}_1(z)).\]
We can derive from Theorem-Definition~\ref{def:subordinationfunction} that $\overline{w}_{fp}(z)$ solves the equation (i) of Lemma \ref{lem:inverse} and that:
\[z=\overline{w}_{fp}(z)+t G_{\mu_0^n}(\overline{w}_{fp}(z)),\]
for all $z\in \C^+$. The latter equation justifies (ii) of Lemma \ref{lem:inverse}.

\subsection{Proof of Corollary \ref{thm:MISE}}

Recall that from Proposition \ref{Biais} and Theorem \ref{variance}, the mean integrated square error is
\[
	 MISE=\E\Big[\left\| \widehat{p}_{0,h} - p_0\right\|^{2}\Big]   \leq   C_{B}^2L  e^{-2ah^{-r}}  +  \frac{C_{var}. e^{\frac{2\gamma}{h}}}{n}.
\]
Minimizing in $h$ amounts to solving the following equation obtained by taking the derivative in the right hand side of (\ref{bornerisque}):
\begin{equation}\label{tradeoff}
\psi(h):= \exp{(\frac{2\gamma}{h} +\frac{2a}{h^r}) } h^{r-1}= O(n).
\end{equation}
Consequently for the minimizer $h_*$ of \eqref{tradeoff} we get  that
\[
\frac{e^{\frac{2\gamma}{h_*}}}{n}=Ch_*^{1-r} e^{-2ah_*^{-r}},
\]
for some constant $C>0$. Hence, in view of \eqref{bornerisque}, when $r<1$ the bias dominates the variance and the contrary occurs when $r>1$. Thus, there are three cases to consider to derive rates of convergence: $r=1$, $r<1$ and  $r>1$. To solve the equation \eqref{tradeoff}, we follow the steps of Lacour \cite{lacourCRAS}.\\

\noindent \textbf{Case $r=1$.} \\

The case where $r=1$ provides a window $h_{*}=2(a+\gamma)/\log n $  and we get
\[MISE = O \left (n^{-\frac{a}{a+\gamma}} \right ).\]

\noindent \textbf{Case $r<1$.} \\

In this case, and in the case $r>1$, following the ideas in \cite{lacourCRAS}, we will look for the bandwidth $h$ expressed as an expansion in $\log (n)$. In this expansion and when $r<1$, the integer $k$ such that $\frac{k}{k+1}<r\leq \frac{k+1}{k+2}$ will play a role. The optimal bandwidth is of the form:
\begin{equation}\label{eq:hstar1}h_*= 2\gamma \Big(\log(n) + (r-1) \log \log (n) + \sum_{i=0}^k b_i (\log n )^{r+i(r-1)}\Big)^{-1},\end{equation}
where the coefficients $b_i$'s are a sequence of real numbers chosen so that $\psi(h_*)=O(n)$. The heuristic of this expansion is as follows: the first term corresponds to the solution of $e^{2\gamma/h}=n$. The second term is added to compensate the factor $h^{r-1}$ in \eqref{tradeoff} evaluated with the previous bandwidth, and the third term aims at compensating the factor $e^{2a/h^r}$. Notice that $r-1<0$ and that the definition of $k$ implies that $r>r+(r-1)>\dots >r+k(r-1)>0>r+(k+1)(r-1)$. This explains the range of the index $i$ in the sum of the right hand side of \eqref{eq:hstar1}.\\

Plugging \eqref{eq:hstar1} into \eqref{tradeoff},
\begin{align*}
\psi(h_*)
 = & n \big(\log n\big)^{r-1}\exp\Big( \sum_{i=0}^k b_i (\log n)^{r+i(r-1)}\Big)\\
   & \times
\exp\Big(\frac{2a}{(2\gamma)^r} \big(\log n\big)^r \big(1 + \frac{(r-1) \log \log (n) + \sum_{i=0}^k b_i (\log n)^{r+i(r-1)}}{\log n}\big)^r \Big)\\
  & \times (2\gamma)^{r-1} \big(\log n\big)^{-(r-1)} \Big(1 + \frac{(r-1) \log \log (n) + \sum_{i=0}^k b_i (\log n)^{r+i(r-1)}}{\log n}\Big)^{-(r-1)}\\
 = & (2\gamma)^{r-1} n(1+v_n)^{1-r}\exp\Big( \sum_{i=0}^k b_i (\log n)^{r+i(r-1)}\Big)\\
  & \times \exp\Big(\frac{2a}{(2\gamma)^r} \big(\log n\big)^r \Big[1+ \sum_{j=0}^{k} \frac{r(r-1)\cdots (r-j)}{(j+1)!}v_n^{j+1}+o(v_n^{k+1})\Big]\Big)
\end{align*}where
\[v_n=\frac{(r-1) \log \log (n) + \sum_{i=0}^k b_i (\log n)^{r+i(r-1)}}{\log n}=(r-1)\frac{\log \log(n)}{\log n}+\sum_{i=0}^k b_i (\log n)^{(i+1)(r-1)}\]
converges to zero when $n\rightarrow +\infty$. We note that
\begin{align*}
v_n^{j+1}=  & \sum_{i=0}^{k-j-1} \sum_{p_0+\cdots p_{j}=i}b_{p_0}\cdots b_{p_{j}} (\log n)^{(i+j+1)(r-1)}+O\Big(\big(\log n\big)^{(k+1)(r-1)}\Big)\\
= & \sum_{\ell=j+1}^{k} \sum_{p_0+\cdots p_j=\ell-j-1} b_{p_0}\cdots b_{p_{j}} (\log n)^{\ell(r-1)}+O\Big(\big(\log n\big)^{(k+1)(r-1)}\Big).\end{align*}
So
\begin{align*}
\psi(h_*)= & (2\gamma)^{r-1} n (1+v_n)^{1-r}\exp\Big( \sum_{i=0}^k b_i (\log n)^{r+i(r-1)}\Big)\\
 &  \times \exp\Big\{\frac{2a}{(2\gamma)^r}(\log n\big)^r + \frac{2a}{(2\gamma)^r}\sum_{\ell=1}^k \sum_{j=0}^{\ell-1} \Big[\frac{r(r-1)\cdots (r-j)}{(j+1)!}\sum_{p_0+\cdots p_j=\ell-j-1} b_{p_0}\cdots b_{p_{j}} \Big](\log n)^{r+\ell(r-1)}\\
& +O\Big(\big(\log n\big)^{(k+1)(r-1)}\Big)\Big\} \\
 = &
(2\gamma)^{r-1} n (1+v_n)^{1-r} \exp\Big(\sum_{i=0}^{k}  M_i (\log n)^{i(r-1)+r} + o(1)\Big).
\end{align*}The condition $\psi(h_*)=O(n)$ implies the following choices of constants $M_i$'s:
\[
M_0=  b_0+\frac{2a}{(2\gamma)^{r}},\qquad \forall i>0,\
M_i= b_i+\frac{2a}{(2\gamma)^{r}}\sum_{j=0}^{i-1} \frac{r(r-1)\cdots (r-j)}{(j+1)!}  \sum_{p_0+\cdots p_j=i-j-1} b_{p_0}\cdots b_{p_{j}}.
\]
Since $h_*$ solves \eqref{tradeoff} if all the $M_i=0$ for $i\in \{0,\cdots k\}$, the above system provides equation by equation the proper coefficients $b_i^*$.
\begin{equation}\label{eq:bstar}
b^*_0=-\frac{2a}{(2\gamma)^{r}},\qquad b^*_i=-\frac{2a}{(2\gamma)^{r}}\sum_{j=0}^{i-1} \frac{r(r-1)\cdots (r-j)}{(j+1)!}  \sum_{p_0+\cdots p_j=i-j-1} b^*_{p_0}\cdots b^*_{p_{j}}.\end{equation}
Replacing in \eqref{bornerisque}, we get:
\[MISE=O\Big(\exp\Big\{-\frac{2a}{(2\gamma)^{r}}\Big[\log n+(r-1)\log \log n+\sum_{i=0}^k b_i^*(\log n)^{r+i(r-1)}\Big]^r\Big\}\Big).\]

\noindent \textbf{Case $r>1$.}\\

Here, let us denote by $k$ the integer such that $\frac{k}{k+1}<\frac{1}{r}\leq \frac{k+1}{k+2}$. We look here for a bandwidth of the form:
\begin{equation}\label{eq:hstar2}h_*^r= 2a \Big(\log n + \frac{r-1}{r} \log \log (n) + \sum_{i=0}^k d_i (\log n)^{\frac{1}{r}-i\frac{r-1}{r}}\Big)^{-1},\end{equation}
where the coefficients $d_i$'s will be chosen so that $\psi(h_*)=O(n)$.

Similar computations as for the case $r<1$ provide that:
\begin{align*}
\psi(h_*)= & (2a)^{\frac{r-1}{r}} n (1+v_n)^{-\frac{r-1}{r}}\times \exp\Big(\sum_{i=0}^k d_i (\log n)^{\frac{1}{r}-i \frac{r-1}{r}}\Big)\\
 & \times \exp\Big(\frac{2\gamma}{(2a)^{1/r}} (\log n)^{1/r} \Big[1 +\\
 & \quad \sum_{\ell=1}^{k} \sum_{j =0}^{\ell-1} \sum_{p_0+\cdots p_j=\ell-j-1} \frac{\frac{1}{r}\big(\frac{1}{r}-1\big)\cdots \big(\frac{1}{r}-j\big)}{(j+1)!} d_{p_0}\cdots d_{p_j} (\log n)^{\ell \frac{1-r}{r}}  +O\big((\log n)^{k\frac{1-r}{r}}\big)\Big]\Big)\\
 = & (2a)^{\frac{r-1}{r}} n (1+v_n)^{-\frac{r-1}{r}} \exp\Big(\sum_{i=0}^{k} M_i (\log n)^{\frac{1}{r}-i\frac{r-1}{r}}+o(1)\Big)
\end{align*}
where here
\[v_n=\frac{\frac{r-1}{r} \log \log (n)+\sum_{i=0}^k d_i (\log n)^{\frac{1}{r}-i\frac{r-1}{r}}}{\log n},\]
and
\begin{equation}\label{eq:dstar}
M_0=d_0+\frac{2\gamma}{(2a)^{1/r}},\qquad \forall i>0,\ M_i=d_i+\frac{2\gamma}{(2a)^{1/r}} \sum_{j =0}^{i-1} \sum_{p_0+\cdots p_j=i-j-1} \frac{\frac{1}{r}\big(\frac{1}{r}-1\big)\cdots \big(\frac{1}{r}-j\big)}{(j+1)!} d_{p_0}\cdots d_{p_j}\end{equation}
Solving $M_0=\cdots =M_k=0$ provides the coefficients $d_i^*$ so that \eqref{tradeoff} is satisfied.\\

Plugging the bandwidth $h_*$ with the coefficients $d_i^*$ into \eqref{bornerisque}, we obtain:
\[MISE=O\Big(\frac{1}{n}\exp\Big\{\frac{2\gamma}{(2a)^{1/r}} \Big[\log n+\frac{r-1}{r}\log \log n+\sum_{i=0}^k d_i^* (\log n)^{\frac{1}{r}-i\frac{r-1}{r}}\Big]^{1/r}\Big\}\Big).\]
This concludes the proof of Corollary \ref{thm:MISE}.



\subsubsection*{Acknowledgement}
The authors thank P. Tarrago for useful discussions. M.M. acknowledges support from the Labex CEMPI (ANR-11-LABX-0007-01). V.C.T. is partly supported by Labex B\'ezout (ANR-10-LABX-58) and by the Chair ``Modélisation Mathématique et Biodiversité" of Veolia Environnement-Ecole Polytechnique-Museum National d'Histoire Naturelle-Fondation X.

{\footnotesize
\providecommand{\noopsort}[1]{}\providecommand{\noopsort}[1]{}\providecommand{\noopsort}[1]{}\providecommand{\noopsort}[1]{}

}
\end{document}